\documentclass[12pt]{article}
\usepackage{graphicx,float,dsfont,graphics,citesort}
\usepackage[margin=1in]{geometry}
\usepackage{xcolor,tikz}
\usepackage{tikz}\usetikzlibrary{shapes,arrows}
\usepackage{amssymb,amsmath,amsthm,bm,setspace,mathrsfs}
\usepackage{url,color,units}
\usepackage[bf]{caption}\setlength{\captionmargin}{1cm}
\usepackage{subcaption}
\usepackage[T1]{fontenc}


\newcommand{\Z}{\mathbb{Z}}

\newcommand{\R}{\mathbb{R}}
\newcommand{\C}{\mathbb{C}}

\newcommand{\cE}{\mathscr{E}}

\newcommand{\cG}{\mathcal{G}}
\newcommand{\cH}{\mathcal{H}}
\newcommand{\cI}{\mathcal{I}}

\newcommand{\cP}{\mathcal{P}}
\newcommand{\cR}{\mathcal{R}}
\newcommand{\cS}{\mathcal{S}}

\newcommand{\Log}{{\ensuremath{\rm Log}\,}}
\renewcommand{\P}{\mathrm{P}}
\newcommand{\E}{\mathrm{E}}
\newcommand{\U}{\mathrm{U}}
\newcommand{\1}{\mathds{1}}
\renewcommand{\d}{{\rm d}}
\newcommand{\e}{{\rm e}}
\DeclareMathOperator{\Var}{Var}

\DeclareMathOperator{\pix}{\rm pix}

\newcommand{\DimM}{\text{\rm Dim}_{_{\rm M}}}

\renewcommand{\ge}{\geqslant}
\renewcommand{\le}{\leqslant}
\renewcommand{\Re}{{\rm Re}}

\author{Davar Khoshnevisan\\University of Utah
	\and
		Yimin Xiao\\Michigan State University}
\title{{On the Macroscopic Fractal Geometry\\of Some Random Sets}\thanks{%
	Research supported in part by the National Science Foundation grant DMS-1307470}}
\date{Version: May 3, 2016}
\newtheorem{stat}{Statement}[section]
\newtheorem{proposition}[stat]{Proposition}
\newtheorem{corollary}[stat]{Corollary}
\newtheorem{theorem}[stat]{Theorem}
\newtheorem{lemma}[stat]{Lemma}
\theoremstyle{definition}
\newtheorem{definition}[stat]{Definition}

\newtheorem{remark}[stat]{Remark}
\newtheorem*{OP}{Open Problem}
\newtheorem{example}[stat]{Example}


\begin{document}
\maketitle
\begin{abstract}
	This paper is concerned mainly
	with the macroscopic fractal behavior of various random sets
	that arise in modern and classical probability theory. Among other things,
	it is shown here that the macroscopic behavior of Boolean coverage processes 
	is analogous to the microscopic structure of the Mandelbrot fractal percolation.
	Other, more technically challenging, results of this paper
	include:
	\begin{itemize}
		\item[(i)] The computation of the macroscopic dimension of the graph of
			a large family of L\'evy processes; and 
		\item[(ii)] The determination of the macroscopic monofractality
			of the extreme values of symmetric stable processes.
	\end{itemize}
	As a consequence of (i), it will be shown that the macroscopic
	fractal dimension of the graph of Brownian motion differs from its microscopic
	fractal dimension. Thus, there can be no scaling argument that
	allows one to deduce the macroscopic geometry from the microscopic. 
	Item (ii) extends the recent work of 
	Khoshnevisan, Kim, and Xiao  \cite{KKX} on the 
	extreme values of Brownian motion, using a different method.\\

\noindent{\it Keywords:} Macroscopic Minkowski dimension, L\'evy processes,
	Boolean models. \\

\noindent{\it \noindent AMS 2010 subject classification:}
	Primary 60G51; Secondary. 28A80, 60G17, 60G52.
\end{abstract}

\section{Introduction}
It has been known for some time that the curve of a L\'evy process
in $\R^d$ is typically an interesting ``random fractal.'' For example,
if $B=\{B_t\}_{t\ge0}$ is a standard Brownian motion on $\R^d$,
then the image and graph of $B$ have Hausdorff dimension
$d\wedge 2$ and $\max(d\wedge2\,,\nicefrac32)$ respectively. If in addition
$d=1$, then the level sets of $B$ also have non-trivial Hausdorff dimension
$\nicefrac12$. See the survey papers of Taylor \cite{Tay86} and Xiao \cite{X04} 
for historic accounts on these results and further developments. 

The beginning student is often presented with some of these
``random-fractal facts'' via simulation. The well-versed reader will
see in Figure \ref{fig:BM} a typical example. As a consequence
of such a simulation, one is led to believe that one can deduce from 
a simulation, such as that in Figure
\ref{fig:BM}, the fractal nature of the graph
$\cup_{0\le t\le 1}\{ (t\,,B_t)\}$ of Brownian motion up to time $1$.

\begin{figure}[h!]\centering
	\includegraphics[width=0.7\textwidth]{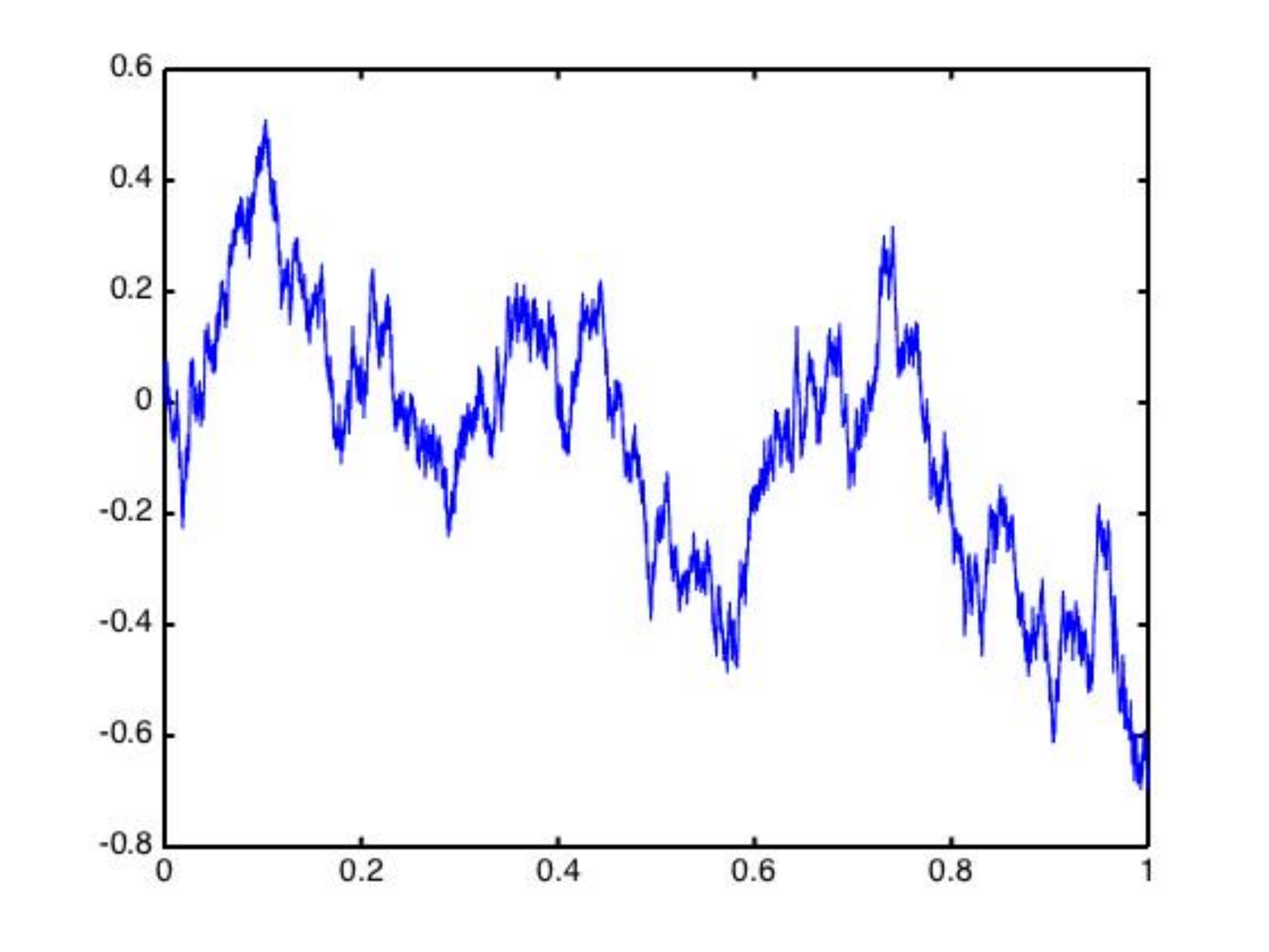}
\caption{The graph of planar Brownian motion}
\label{fig:BM}
\end{figure}

Figure \ref{fig:BM}, and other such simulations, are produced by running a
random walk for a long time and then rescaling, using a central-limit scaling. 
The process is usually explained by appealing to Donsker's invariance principle.
Unfortunately,
the actual statement of Donsker's invariance principle is not sufficiently strong 
to ensure that we can ``see'' the various fractal properties of Brownian motion
in simulations. Though Barlow and Taylor \cite{BarlowTaylor1,BarlowTaylor} 
have introduced a theory of large-scale random fractals which, 
among other things, provides a more rigorous justification. 

One of the goals of this paper is to test the extent to which 
one can experimentally deduce large-scale geometric facts about Brownian motion---%
and sometimes more general L\'evy processes---from simulation analysis.
This is achieved by presenting several examples in which one is able to 
compute the macroscopic fractal dimension of a macroscopic random fractal.
One of the surprising lessons of this exercise is that our intuition is, at times, faulty.
Yet, our instincts are correct at other times. 

Here is an example in which our intuition is spot on:
It is known that 
the level sets of Brownian motion have dimension $\nicefrac12$, both macroscopically
and microscopically. This statement has the pleasant consequence that
we can ``see'' the fractal structure of the level sets of Brownian motion from
Figure \ref{fig:BM}. As we shall soon see, however, the same cannot be
said of the graph of Brownian motion: The microscopic and macroscopic 
fractal dimensions of the graph of Brownian motion do not agree!

In order to keep the technical level of the paper as low as possible,
our choice of ``fractal dimension'' is the macroscopic Minkowski dimension,
which we will present in the following section. There are more sophisticated
notions which, we however, will not present here; see Barlow and Taylor
\cite{BarlowTaylor1,BarlowTaylor} for examples of these more sophisticated notions
of macroscopic fractal dimension.

Throughout, $\Log$ denotes the base-2 logarithm.
For all $x\in\R^d$, we set
$|x|:=\max_{1\le j\le d}|x_j|$
and $\|x\|:=(x_1^2+\cdots+x_d^2)^{1/2}$.
Whenever we write ``$f(x) \lesssim g(x)$ for all $x\in X$,'' and/or ``$g(x)\gtrsim f(x)$
for all $x\in X$,''
we mean that there exists a finite constant $c$ such that
$f(x) \le c g(x)$ uniformly for all $x\in X$. If
$f(x)\lesssim g(x)$ and $g(x)\lesssim f(x)$ for all $x\in X$, then
we write ``$f(x)\asymp g(x)$ for all $x\in X$.''

\section{Minkowski Dimension}
The macroscopic Minkowski dimension is an easy-to-compute
``fractal dimension number'' that describes the large-scale fractal geometry
of a set. In order to recall the  Minkowski dimension we first need
to introduce some notation.

For all $x\in\R^d$ and $r>0$ define
\[
	B(x;r):= [x_1-r\,,x_1+r)\times\cdots\times[x_d-r\,,x_d+r),
\]
and
\begin{equation}\label{Q}
	Q(x):= [x_1\,,x_1+1)\times\cdots\times[x_d\,,x_d+1).
\end{equation}
Of course, $Q(x)=B(y;\frac12)$ where
$y_i:=x_i+\frac12$. But it is convenient for $Q(x)$
to have its own notation.

One can introduce a \emph{pixelization map} which maps a set $F\subseteq\R^d$
to a set $\pix(F)\subseteq\Z^d$ as follows:
\[
	\pix(F) := \left\{ x\in \Z^d:\, F\cap Q(x)\neq\varnothing\right\},
\]
for all $F\subseteq\R^d$. 
It  is clear that $F=\pix(F)$ if and only if
$F$ is a subset of the integer lattice $\Z^d$. For example,
it should be clear that $\pix(\R^d)=\Z^d$.
Figure \ref{fig:pix} below shows how the pixelization map 
works in a different simple case.

\begin{figure}[h!]\centering
	\begin{tikzpicture}[scale=.8]
		\draw[color=red!20,step=1,ultra thin] (0,0) grid (16,8);
		\draw[thick,fill=red,opacity=0.2] (3,4) ellipse (1.9 and 3.2);
		\draw[thick,brown!90] (3,4) ellipse (1.9 and 3.2);
		\draw[->,ultra thick,blue] (5.1,4) to [bend right=30] node [above,black]{$\pix$} (10.8,4);
		\foreach \i in {11,12,13,14} {
			\foreach \j in {1,2,3,4,5,6}{
			\shade [ball color=red!90!blue!90] (\i,\j) circle [radius=1mm];
		}}
		\shade [ball color=red!90!blue!90] (12,7) circle [radius=1mm];
		\shade [ball color=red!90!blue!90] (13,7) circle [radius=1mm];
		\foreach \i in {1,2,3,4} {
			\foreach \j in {1,2,3,4,5,6}{
			\shade [ball color=red!50] (\i,\j) circle [radius=0.5mm];
		}}
		\shade [ball color=red!20] (2,7) circle [radius=0.5mm];
		\shade [ball color=red!20] (3,7) circle [radius=0.5mm];
		\draw[very thick] (0,0) to (16,0) to (16,8) to (0,8) to (0,0);
	\end{tikzpicture}
	\caption{The effect of the pixelization map on the ellipse on the left
	is shown on the right-hand side}
	\label{fig:pix}
\end{figure}
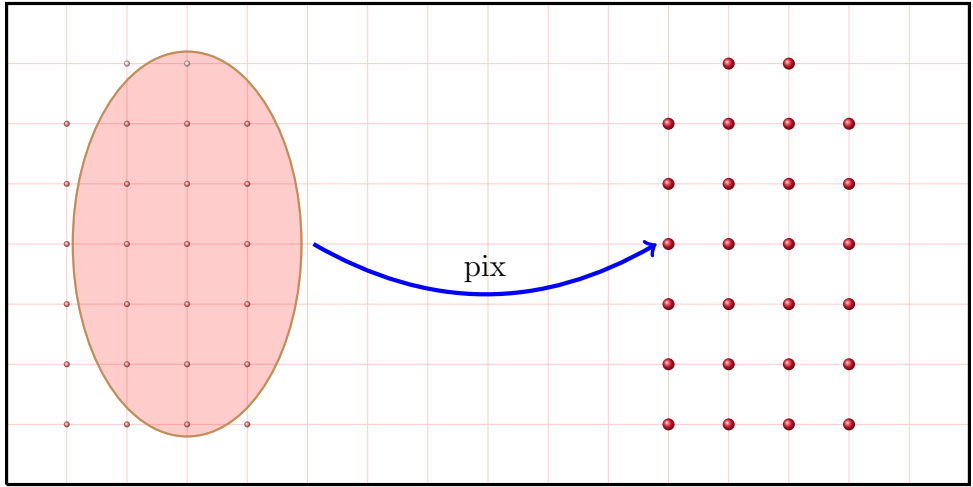

The following describes the role of the pixelization map 
in this paper.

\begin{definition}\label{Def:Dim}
	The \emph{macroscopic Minkowski dimension}
	of a set $F\subseteq\R^d$ is
	\[
		\DimM (F) := \limsup_{n\to\infty} n^{-1}\Log_+
		\left(\left|\pix(F)\cap B(0;2^n)\right|\right),
	\]
	where $|\,\cdots|$ denotes cardinality and $\Log_+(y):= \log_2(\max(y\,,2))$.
\end{definition}

\begin{remark}
	The right-hand side of \eqref{Def:Dim} coincides with  the 
	Barlow--Taylor \cite{BarlowTaylor} \emph{upper mass dimension}
	of the discrete set $\pix(F)\subseteq\Z^d$.
\end{remark}

The proof of the following elementary result
is left to the interested reader.

\begin{lemma}
	For every $A\subseteq\R^d$,
	\[
		\DimM(A) = \limsup_{n\to\infty} n^{-1}\log
		\left| \left\{ x\in B(0;2^n)\cap\Z^d:\ 
		Q(x)\cap A\neq\varnothing\right\}\right|,
	\]
	where $Q(x)$ was defined in \eqref{Q}.
\end{lemma}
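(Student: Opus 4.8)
The plan is to recognize that the set whose cardinality appears on the right-hand side of the claimed identity is, term for term, the set $\pix(A)\cap B(0;2^n)$; once this is in hand, the two quantities being compared have exactly the same cardinality for every $n$, and the only remaining point is that passing from $\Log_+$ to $\log$ does not affect the relevant $\limsup$.

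First I would unwind the definition of the pixelization map. Since $\pix(A)=\{x\in\Z^d:\ A\cap Q(x)\neq\varnothing\}$ is a subset of $\Z^d$, and $B(0;2^n)$ is merely a bounded box in $\R^d$, intersecting the two sets simply appends the constraint $x\in B(0;2^n)$ to the membership condition defining $\pix(A)$. This gives
\[
\pix(A)\cap B(0;2^n)=\bigl\{x\in\Z^d\cap B(0;2^n):\ A\cap Q(x)\neq\varnothing\bigr\},
\]
which is exactly the set appearing in the Lemma. Write $N_n$ for this common cardinality. Because $\Z^d\cap B(0;2^n)$ is finite, $N_n$ is a non-negative integer, and because the boxes $B(0;2^n)$ increase with $n$, the sequence $(N_n)_{n\ge1}$ is non-decreasing.

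Second I would check that $n^{-1}\Log_+(N_n)$ and $n^{-1}\log N_n$ have the same $\limsup$, where we read $\log$ as the base-$2$ logarithm (consistent with the paper's convention for $\Log$; any other base would merely rescale both sides by the same constant). If $N_n\ge 2$ for all large $n$, then $\Log_+(N_n)=\log_2(\max(N_n,2))=\log_2 N_n$ for all such $n$, so the two sequences agree from some index on and thus share a $\limsup$; in particular that common value is $\DimM(A)$. Otherwise monotonicity forces $N_n\le 1$ for every $n$, so $N_n$ is eventually a constant in $\{0,1\}$; then $n^{-1}\Log_+(N_n)\to 0$, and likewise $n^{-1}\log N_n\to0$ (with the convention $\log 0:=0$, which is only needed in the degenerate case $A=\varnothing$), so both $\limsup$'s equal $0=\DimM(A)$.

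The argument involves no real obstacle: the content is just the unwinding of the pixelization map together with the observation that the truncation $\max(\,\cdot\,,2)$ built into $\Log_+$, the base of the logarithm, and the empty-set corner case are all immaterial once one divides by $n$ and lets $n\to\infty$. This is precisely why the authors label the statement elementary.
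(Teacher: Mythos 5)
Your proof is correct and is exactly the argument the paper intends (the paper leaves this "elementary result" to the reader): the set on the right-hand side is by definition $\pix(A)\cap B(0;2^n)$, and the discrepancy between $\Log_+$ and $\log$ washes out after dividing by $n$, with the monotonicity of $N_n$ cleanly handling the degenerate case $N_n\le 1$.
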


Some of the elementary properties of $\DimM$ are listed below:
\begin{itemize}
\item If $A\subseteq B$ then $\DimM(A)\le\DimM(B)$;
\item If $A$ is a bounded set, then $\DimM(A)=0$;
\item $\DimM(\R^d)=\DimM(\Z^d)=d$.
\end{itemize}
The proof is omitted as it is easy to justify the preceding.

\subsection{Enumeration in shells}\label{subsec:shells}
There is a slightly different method of computing the macroscopic Minkowski
dimension of a set. With this aim in mind, define
\[
	\cS_0 := B(0;1),\qquad
	\cS_{n+1} := B(0;2^{n+1})\setminus B(0;2^n)\quad \text{for every integer
	$n \ge 0$.}
\]
One can think of $\cS_n$ as the $n$th \emph{shell} in $\Z^d$.

The following provides an alternative description of $\DimM (F)$.

\begin{proposition}\label{pr:Dim}
	For every $F\subseteq\R^d$,
	\[
		\DimM (F) := \limsup_{n\to\infty} n^{-1}\Log_+
		\left(\left|\pix(F)\cap\cS_n\right|\right).
	\]
\end{proposition}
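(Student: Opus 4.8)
The plan is to compare the two ways of counting pixels---by balls $B(0;2^n)$ versus by shells $\cS_n$---and show that each controls the other up to a bounded shift in the index $n$, which is invisible under the $\limsup_{n\to\infty}n^{-1}\Log_+(\cdots)$ operation. Write $a_n := |\pix(F)\cap B(0;2^n)|$ and $b_n := |\pix(F)\cap\cS_n|$. Since the shells partition $\Z^d$ and $B(0;2^n) = \bigcup_{k=0}^{n}\cS_k$ (a disjoint union), we have the exact identity $a_n = \sum_{k=0}^{n} b_k$. So the whole proposition reduces to the elementary fact that for a sequence of nonnegative integers, $\limsup_n n^{-1}\Log_+(\sum_{k=0}^n b_k) = \limsup_n n^{-1}\Log_+(b_n)$.

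For the easy direction, $b_n \le a_n$ for all $n$, so $\limsup_n n^{-1}\Log_+(b_n) \le \limsup_n n^{-1}\Log_+(a_n) = \DimM(F)$. For the reverse inequality, let $\beta$ denote the shell-$\limsup$; I may assume $\beta<\infty$ (otherwise both sides are $+\infty$, or one argues the trivial bound $\le d$). Fix $\varepsilon>0$. Then $b_k \le 2^{(\beta+\varepsilon)k}$ for all $k$ large, say $k\ge k_0$, and hence $a_n = \sum_{k=0}^{n} b_k \le C + \sum_{k=k_0}^{n} 2^{(\beta+\varepsilon)k} \le C + C' 2^{(\beta+\varepsilon)n}$ for constants $C,C'$ depending only on $F$ and $\varepsilon$ (here I use that $\sum_{k\le n}2^{ck}\asymp 2^{cn}$ for any fixed $c>0$; if $\beta+\varepsilon$ happens to be $0$ one replaces this by the bound $a_n \le C + (n+1)$, which is still $\Log_+$-negligible). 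Taking $\Log_+$, dividing by $n$, and letting $n\to\infty$ gives $\DimM(F) \le \beta+\varepsilon$, and then $\varepsilon\downarrow0$ finishes it.

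One small bookkeeping point to state cleanly: the $\Log_+$ truncation and the additive constant $C$ from the finitely many small shells are harmless because $n^{-1}\Log_+(C + x)\to$ has the same $\limsup$ as $n^{-1}\Log_+(x)$ whenever $x = x_n\to\infty$, and if $x_n$ stays bounded then both contributions vanish in the limit. I would dispatch this with the general observation: if $0\le u_n\le v_n\le Cu_n + C$ for all $n$ and some constant $C\ge1$, then $\limsup_n n^{-1}\Log_+(u_n)=\limsup_n n^{-1}\Log_+(v_n)$; applying it once with $(u_n,v_n)=(b_n, a_n)$ after absorbing the geometric sum, and noting $a_n \ge b_n$, closes the loop.

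The main obstacle is essentially cosmetic rather than conceptual: making sure the geometric-sum estimate $\sum_{k\le n} 2^{ck} \asymp 2^{cn}$ is applied with a constant that does not depend on $n$, and handling the degenerate case $\beta=0$ (equivalently, the possibility that $F$ is effectively bounded or very sparse) separately so that no spurious logarithmic factor is introduced. Once the identity $a_n=\sum_{k=0}^n b_k$ is written down, there is no real difficulty.
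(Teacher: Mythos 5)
Your argument is correct and is essentially the paper's own proof: both reduce to the identity $|\pix(F)\cap B(0;2^n)|=\sum_{k=0}^n|\pix(F)\cap\cS_k|$, get the easy inequality from $\cS_n\subseteq B(0;2^n)$, and bound the cumulative sum by a geometric series using the shell-$\limsup$ plus an $\varepsilon$ of room. Your additive-$\varepsilon$ formulation and explicit treatment of the degenerate case $\beta=0$ are in fact slightly more careful than the paper's multiplicative $(1+\varepsilon)$ version, which silently glosses over that case, but the route is the same.
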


Proposition \ref{pr:Dim} tells us that we can replace
$\pix(F)\cap B(0;2^n)$, in Definition \ref{Def:Dim}, by $\pix(F)\cap \cS_n$
without altering the formula for $\DimM(F)$.

\begin{proof}
	Our goal is to prove that $\DimM(F)=\delta(F)$, where
	\[
		\delta(F) := \limsup_{n\to\infty} n^{-1}\Log_+
		\left(\left|\pix(F)\cap\cS_n\right|\right).
	\]
	Since $\cS_n\subseteq B(0;2^n)$, the bound
	$\delta(F)\le\DimM(F)$ is immediate. We will establish the reverse inequality.
	
	The definition of $\delta(F)$ ensures that for every $\varepsilon\in(0\,,1)$ there
	exists an integer $N(\varepsilon)$ such that
	\[
		\left|\pix(F)\cap\cS_k\right|
		\le 2^{k\delta(F)(1+\varepsilon)}\qquad\text{for all $k\ge N(\varepsilon)$}.
	\]
	In particular, all $n\ge N(\varepsilon)$,
	\begin{align*}
		\left|\pix(F)\cap B(0;2^n)\right|
			&=\sum_{k=0}^n\left|\pix(F)\cap\cS_k\right|
			\le K(\varepsilon) + \sum_{k=N(\varepsilon)}^n 2^{k\delta(F)(1+\varepsilon)},\\
		&= O\left(2^{n\delta(F)(1+\varepsilon)}\right)\qquad[n\to\infty],
	\end{align*}
	where
	$K(\varepsilon) := \sum_{0\le k<N(\varepsilon)}|\cS_k|$ is finite and depends
	only on $(d\,,\varepsilon)$.  It follows from Definition \ref{Def:Dim}
	that $\DimM(F)\le\delta(F)(1+\varepsilon)$. This
	completes the proof since $\varepsilon\in(0\,,1)$ can be made to be as small as 
	one would like.
\end{proof}

\subsection{Boolean models}
In addition to the method of Proposition \ref{pr:Dim}, there is at least one
other useful method for computing the macroscopic Minkowski dimension of
a set. In contrast with the enumerative method of \S\ref{subsec:shells}, 
the method of this subsection is intrinsically probabilistic.

Let $\mathbf{p}:=\{p(x)\}_{x\in\Z^d}$ denote a collection of numbers in $(0\,,1)$,
and refer to the collection $\mathbf{p}$
as \emph{coverage probabilities},
in keeping with the literature on Boolean coverage processes \cite{Hall88}.

Let $\zeta:=\{\zeta(x)\}_{x\in\Z^d}$ denote a field of totally independent random
variables  that satisfy the following for all $x\in\Z^d$:
\[
	\P\{\zeta(x)=1\} = p(x) \quad
	\text{and}\quad\P\{\zeta(x)=0\}=1-p(x).
\]
By a \emph{Boolean model} in $\R^d$ with \emph{coverage probabilities}
$\mathbf{p}$ we mean the random set
\[
	\mathbf{B}(\mathbf{p}) := \bigcup_{\substack{x\in\Z^d:\\ \zeta(x)=1}} Q(x),
\]
where $Q(x)$ was defined earlier in \eqref{Q}.

If $A$ and $B$ are two subsets of $\R^d$, then we say that
$A$ is \emph{recurrent} for $B$ if $|A\cap B|=\infty$.
Equivalently, $A$ is recurrent for $B$ if $A\cap B\cap\cS_n\neq\varnothing$
for infinitely-many integers $n\ge0$. Clearly, if $A$ is recurrent
for $B$, then $B$ is also recurrent for $A$. Therefore, set recurrence is a
symmetric relation.

As the following result shows, it is  not hard to decide whether or not
a nonrandom Borel set $A\subseteq\R^d$
is recurrent for $\mathbf{B}(\mathbf{p})$.

\begin{lemma}\label{lem:BC}
	Let $A\subset\R^d$ be a nonrandom Borel set.
	Then,	
	\[
		\P\left\{ |A\cap\mathbf{B}(\mathbf{p})|=\infty\right\}=\begin{cases}
			1&\text{if $\sum_{x\in\pix(A)}
				p(x)=\infty$},\\
			0&\text{if $\sum_{x\in\pix(A)}
				p(x)<\infty$}.
		\end{cases}
	\]
\end{lemma}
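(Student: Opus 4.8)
The plan is to apply the Borel--Cantelli lemmas to the sequence of events $E_n := \{\mathbf{B}(\mathbf{p})\cap A \cap \cS_n \neq \varnothing\}$, $n \ge 0$, after first reducing the problem to a purely lattice-based statement via the pixelization map. First I would observe that, because $\mathbf{B}(\mathbf{p})$ is a union of the unit cubes $Q(x)$ over the sites $x$ with $\zeta(x)=1$, and $A \cap Q(x) \neq \varnothing$ precisely when $x \in \pix(A)$, we have the deterministic identity
\[
	A \cap \mathbf{B}(\mathbf{p}) \neq \varnothing \text{ in } \cS_n
	\iff \exists\, x \in \pix(A)\cap\cS_n \text{ with } \zeta(x)=1 .
\]
More to the point, $|A\cap\mathbf{B}(\mathbf{p})| = \infty$ if and only if $\zeta(x)=1$ for infinitely many $x \in \pix(A)$; this is because each cube $Q(x)$ with $\zeta(x)=1$ contributes a set of infinite Lebesgue-type ``size'' (indeed $|Q(x)|$ is a nondegenerate cube, so $|A \cap Q(x)|=\infty$ whenever $A\cap Q(x)\neq\varnothing$ and $A$ has no isolated structure — one should be slightly careful here, see below). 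So the event in question is, up to this reduction, the event $\{\sum_{x\in\pix(A)}\zeta(x) = \infty\}$.

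Next I would invoke the Borel--Cantelli lemmas for the independent family $\{\zeta(x)\}_{x\in\pix(A)}$. If $\sum_{x\in\pix(A)} p(x) < \infty$, then the first Borel--Cantelli lemma gives $\P\{\zeta(x)=1 \text{ for infinitely many } x \in \pix(A)\} = 0$, hence $|A\cap\mathbf{B}(\mathbf{p})|<\infty$ a.s. Conversely, if $\sum_{x\in\pix(A)} p(x) = \infty$, then since the $\zeta(x)$ are \emph{totally independent} (the hypothesis of the paper), the second Borel--Cantelli lemma applies and gives that a.s.\ infinitely many $x\in\pix(A)$ have $\zeta(x)=1$, hence $|A\cap\mathbf{B}(\mathbf{p})|=\infty$ a.s. Enumerating $\pix(A)$ as a countable set (it is a subset of $\Z^d$) makes both applications literally the classical one-dimensional statements.

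The one genuinely delicate point — and the step I would expect to require the most care — is the claim that ``$A \cap Q(x)\neq\varnothing$'' already forces ``$|A\cap Q(x)|=\infty$,'' which is what lets us conclude $|A\cap\mathbf{B}(\mathbf{p})|=\infty$ from infinitely many covered pixels. This is false for a general Borel set $A$ (e.g. $A$ a single point). The correct resolution is that the statement $|A\cap\mathbf{B}(\mathbf{p})|=\infty$ in the paper is meant in the counting sense used throughout Section~2, namely $A\cap\mathbf{B}(\mathbf{p})$ meets infinitely many shells $\cS_n$ — equivalently, $A\cap\mathbf{B}(\mathbf{p})$ is unbounded — which is exactly the definition of recurrence given just before the lemma: ``$A$ is recurrent for $B$ if $A\cap B\cap\cS_n \neq\varnothing$ for infinitely many $n$.'' Under that reading, $A\cap\mathbf{B}(\mathbf{p})$ is unbounded iff infinitely many shells contain a covered pixel of $\pix(A)$, iff infinitely many $x\in\pix(A)$ have $\zeta(x)=1$ (since each shell contains finitely many lattice points), and the Borel--Cantelli dichotomy above closes the argument. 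I would therefore state the reduction carefully in terms of shells rather than Lebesgue measure, and then the two Borel--Cantelli applications are routine.
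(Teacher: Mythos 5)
Your argument is exactly the one the paper has in mind: the authors omit the proof, noting only that the lemma ``is basically a reformulation of the Borel--Cantelli lemma for independent events,'' and your reduction to the independent family $\{\zeta(x)\}_{x\in\pix(A)}$ followed by the two Borel--Cantelli lemmas is precisely that reformulation. Your observation that $|A\cap\mathbf{B}(\mathbf{p})|=\infty$ must be read in the recurrence sense (meeting infinitely many shells $\cS_n$, equivalently infinitely many covered pixels of $\pix(A)$) rather than as cardinality of a point set is correct and consistent with the equivalence the paper states just before the lemma.
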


Lemma \ref{lem:BC} is basically a reformulation
of the Borel--Cantelli lemma for independent
events. Therefore, we skip the proof. Instead,
let us mention the following,
more geometric, result which almost characterizes
recurrent sets in terms of their macroscopic Minkowski
dimension, in some cases.

\begin{proposition}\label{pr:Boolean}
	Suppose $\mathbf{p}$ has an \emph{index},
	\begin{equation}\label{alpha:p(x)}
		\text{\rm Ind}(\mathbf{p}) := -\lim_{|x|\to\infty}
		\frac{\log p(x)}{\log|x|}.
	\end{equation}
	Then for every nonrandom Borel set $A\subseteq\R^d$,
	\[
		\P\left\{ |A\cap\mathbf{B}(\mathbf{p})|=\infty\right\}=\begin{cases}
			1&\text{if $\DimM(A)>\text{\rm Ind}(\mathbf{p})$},\\
			0&\text{if $\DimM(A)<\text{\rm Ind}(\mathbf{p})$}.
		\end{cases}
	\]
\end{proposition}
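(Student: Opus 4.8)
\section*{Proof proposal}

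The plan is to deduce Proposition \ref{pr:Boolean} from the Borel--Cantelli criterion of Lemma \ref{lem:BC}, by translating the two hypotheses $\DimM(A) > \text{\rm Ind}(\mathbf{p})$ and $\DimM(A) < \text{\rm Ind}(\mathbf{p})$ into the divergence, respectively convergence, of the series $\sum_{x \in \pix(A)} p(x)$. The bridge between the two sides is the shell decomposition: I would write $\sum_{x\in\pix(A)} p(x) = \sum_{n\ge 0} \sum_{x \in \pix(A)\cap\cS_n} p(x)$ and estimate each inner sum using two facts, namely that $|\pix(A)\cap\cS_n|$ grows like $2^{n\DimM(A)}$ along a subsequence (and is bounded by $2^{n(\DimM(A)+\varepsilon)}$ eventually, by Proposition \ref{pr:Dim}), and that every $x\in\cS_n$ has $|x|\asymp 2^n$, so that by the definition of $\text{\rm Ind}(\mathbf{p})$ in \eqref{alpha:p(x)} we have $p(x) = |x|^{-\text{\rm Ind}(\mathbf{p})+o(1)} = 2^{-n\text{\rm Ind}(\mathbf{p})+o(n)}$ uniformly over $x\in\cS_n$ as $n\to\infty$.

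First I would make the uniform estimate on $p$ precise: fix $\varepsilon>0$; then there is $N_\varepsilon$ so that $2^{-n(\text{\rm Ind}(\mathbf{p})+\varepsilon)} \le p(x) \le 2^{-n(\text{\rm Ind}(\mathbf{p})-\varepsilon)}$ for all $x\in\cS_n$ and all $n\ge N_\varepsilon$ (absorbing the $|x|\asymp 2^n$ comparison constants into the $\varepsilon$, using $\log|x| = n + O(1)$). For the convergence half, assume $\DimM(A) < \text{\rm Ind}(\mathbf{p})$ and pick $\varepsilon$ small enough that $\DimM(A) + 2\varepsilon < \text{\rm Ind}(\mathbf{p}) - \varepsilon$ — equivalently $\DimM(A) < \text{\rm Ind}(\mathbf{p}) - 3\varepsilon$ — so that, using $|\pix(A)\cap\cS_n| \le 2^{n(\DimM(A)+\varepsilon)}$ for large $n$ from Proposition \ref{pr:Dim}, the inner shell sum is at most $2^{n(\DimM(A)+\varepsilon)} \cdot 2^{-n(\text{\rm Ind}(\mathbf{p})-\varepsilon)}$, which is summable in $n$; hence $\sum_{x\in\pix(A)} p(x) < \infty$ and Lemma \ref{lem:BC} gives probability $0$. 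For the divergence half, assume $\DimM(A) > \text{\rm Ind}(\mathbf{p})$ and choose $\varepsilon$ with $\DimM(A) - \varepsilon > \text{\rm Ind}(\mathbf{p}) + \varepsilon$; by the $\limsup$ in Proposition \ref{pr:Dim} there is an infinite set of $n$ with $|\pix(A)\cap\cS_n| \ge 2^{n(\DimM(A)-\varepsilon)}$, and for such $n$ (also $\ge N_\varepsilon$) the inner shell sum is at least $2^{n(\DimM(A)-\varepsilon)}\cdot 2^{-n(\text{\rm Ind}(\mathbf{p})+\varepsilon)} \to \infty$, so the full series diverges and Lemma \ref{lem:BC} gives probability $1$.

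I would include one careful but routine remark: the raw cardinality bounds from Proposition \ref{pr:Dim} are stated via $\Log_+$, so I should note that when $\DimM(A)>0$ the $\max(\cdot\,,2)$ truncation is harmless for large $n$, and when $\DimM(A)=0$ the divergence case cannot occur (since $\text{\rm Ind}(\mathbf{p})\ge 0$ always, as $p(x)\in(0,1)$), while in the convergence case $|\pix(A)\cap\cS_n|$ is subexponential and $p(x)$ is genuinely exponentially small on $\cS_n$ whenever $\text{\rm Ind}(\mathbf{p})>0$, so the series still converges; the borderline $\text{\rm Ind}(\mathbf{p})=0$ with $\DimM(A)=0$ falls under neither hypothesis and needs no treatment.

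The main obstacle, such as it is, is bookkeeping rather than conceptual: one must be scrupulous about the direction of the inequalities in the definition of $\text{\rm Ind}(\mathbf{p})$ (it is defined with a genuine limit, which is what lets the uniform two-sided bound on $p$ over a whole shell go through — this would fail for a mere $\limsup$ or $\liminf$), and about the fact that $\DimM(A)$ is a $\limsup$, so that the upper bound $|\pix(A)\cap\cS_n|\le 2^{n(\DimM(A)+\varepsilon)}$ holds for \emph{all} large $n$ whereas the lower bound $|\pix(A)\cap\cS_n|\ge 2^{n(\DimM(A)-\varepsilon)}$ holds only along a subsequence — which is exactly enough for the divergence of a series of nonnegative terms. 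Once the two-sided estimate on $p$ over shells is in hand, each half is a two-line comparison with a geometric series.
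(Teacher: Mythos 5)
Your proof is correct. The core estimate---decomposing $\sum_{x\in\pix(A)}p(x)$ over shells, using $|x|\asymp 2^n$ on $\cS_n$ to get $p(x)=2^{-n(\text{\rm Ind}(\mathbf{p})+o(1))}$ uniformly on the $n$th shell, and pairing this with Proposition \ref{pr:Dim}---is exactly the computation the paper performs to show that $\limsup_{n\to\infty}n^{-1}\Log\E(N_n)=\DimM(A)-\text{\rm Ind}(\mathbf{p})$ for $N_n:=\sum_{x\in\pix(A)\cap\cS_n}\zeta(x)$, and your convergence half is essentially the paper's (first Borel--Cantelli lemma applied to $\sum_n\E(N_n)<\infty$). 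Where you genuinely diverge is the recurrence half: you route it through Lemma \ref{lem:BC}, i.e.\ the second Borel--Cantelli lemma for independent events, whereas the paper does not invoke that lemma at all and instead runs a second-moment argument---$\Var(N_n)\le\E(N_n)$, Chebyshev, and then Borel--Cantelli along a subsequence on which $\E(N_n)$ grows exponentially. Your route is shorter given that Lemma \ref{lem:BC} is on the table, and your bookkeeping (the $\Log_+$ truncation, the distinction between the all-large-$n$ upper bound and the subsequential lower bound, and the observation that $\text{\rm Ind}(\mathbf{p})\ge 0$ rules out the degenerate case $\DimM(A)=0$ in the recurrence half) is sound. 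The paper's longer route buys two things: it yields the stronger conclusion $\DimM(A\cap\mathbf{B}(\mathbf{p}))\ge\DimM(A)-\text{\rm Ind}(\mathbf{p})$ as a byproduct, and, as the Remark following the proof emphasizes, it uses independence only through the bound $\Var(N_n)=O(\E(N_n))$, so it extends to weakly correlated fields $\zeta$---a flexibility that your appeal to the second Borel--Cantelli lemma forfeits.
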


We can compare this result to a similar result of Hawkes \cite{Hawkes81}  
about the hitting probabilities of the Mandelbrot fractal percolation. 
This comparison suggests that the Booelan models of this paper play
an analogous role in the theory of macroscopic fractals as does
fractal percolation in the better-studied theory of microscopic fractals.

\begin{OP}
	Is there a macroscopic analogue of the microscopic capacity theory of
	Peres \cite{Peres96a,Peres96}?
\end{OP}

\begin{proof}[Proof of Proposition \ref{pr:Boolean}]
	Let us consider the process $N_0,N_1,N_2,\ldots$,
	defined as
	\[
		N_n :=  |\mathbf{B}(\mathbf{p})\cap A\cap\cS_n|
		=\sum_{x\in \pix(A)\cap\cS_n}\zeta(x)
		\qquad[n\ge 0].
	\]
	Owing to \eqref{alpha:p(x)} and the definition of $\DimM$,
	\begin{equation}\label{E(Nn)}
		\limsup_{n\to\infty} n^{-1}\Log \E(N_n)=
		\DimM(A)-\text{\rm Ind}(\mathbf{p}).
	\end{equation}
	
	Suppose first that $\DimM(A)<\text{\rm Ind}(\mathbf{p})$.
	We may combine \eqref{E(Nn)} and Markov's inequality in
	order to see that
	$\sum_{n=1}^\infty\P \{ N_n >0 \}\le \sum_{n=1}^\infty\E(N_n)<\infty.$
	The Borel--Cantelli lemma then implies that with probability one
	$N_n=0$ for all but finitely-many integers $n$.
	That is, $|\mathbf{B}(\mathbf{p})\cap A)|<\infty$ a.s.\
	if $\DimM(A)<\text{\rm Ind}(\mathbf{p})$. This proves half of the proposition.
	
	For the remaining half let us assume that $\DimM(A)>\text{\rm Ind}(\mathbf{p})$,
	and  notice that
	$\Var(N_n) =\sum_{x\in \pix(A)\cap\cS_n}
	p(x)(1-p(x))\le\E(N_n).$
	Therefore,
	\begin{equation}\label{P:E(Nn)}
		\P\left\{ N_n \le \tfrac12\E(N_n) \right\}\le
		\P\left\{ |N_n -\E N_n| \ge \tfrac12\E(N_n)\right\}\le
		\frac{4\Var(N_n)}{|\E(N_n)|^2}
		\le \frac{4}{\E(N_n)},
	\end{equation}
	thanks to the Chebyshev's inequality. Because of \eqref{P:E(Nn)}
	there exists an infinite collection $\mathcal{N}$ of positive integers such that
	\[
		n^{-1}\Log \E(N_n) \to  \DimM(A)-\text{\rm Ind}(\mathbf{p})>0
		\qquad\text{as $n$ approaches infinity in $\mathcal{N}$.} 
	\]
	This fact, and \eqref{E(Nn)}, together imply that
	$\sum_{n\in\mathcal{N}}\P\{N_n\le\frac12\E(N_n)\}<\infty$, and hence
	\[
		\DimM(\mathbf{B}(\mathbf{p})\cap A) = 
		\limsup_{n\to\infty} n^{-1}\Log N_n\ge
		\lim_{\substack{n\to\infty:\\n\in\mathcal{N}}}
		n^{-1}\Log N_n \ge \DimM(A)-\text{\rm Ind}(\mathbf{p})>0,
	\]
	almost surely. This completes the proof.
\end{proof}

\begin{remark}
	A quick glance at the proof shows that the independence
	of the $\zeta$'s was needed only to show that
	\begin{equation}\label{Var:cond}
		\Var(N_n)=O(\E(N_n))\qquad\text{as
		$n\to\infty$.}
	\end{equation}
	Because $\Var(N_n) = \sum_{x,y\in\pix(A)\cap\cS_n}
	\P\{\zeta(x)=\zeta(y)=1\}$, \eqref{Var:cond}
	continues to hold if the independence
	of the $\zeta$'s is relaxed to a condition such as the following:
	There exists finite and positive constants $c$ and $K$ such that
	\[
		\P[\zeta(x)=1\mid\zeta(y)=1] \le c\P\{\zeta(x)=1\}\quad
		\text{whenever $\|x\|\wedge\|y\|\ge K$}.
	\]
\end{remark}

We highlight the power of Proposition \ref{pr:Boolean} by using it to
give a quick computation of $\DimM(A\cap\mathbf{B}(\mathbf{p}))$.

\begin{corollary}\label{cor:Boolean}
	If $A\subseteq\R^d$ denotes a nonrandom Borel set, then
	\[
		\DimM\left(A\cap\mathbf{B}(\mathbf{p})\right) =
		\DimM(A) - \text{\rm Ind}(\mathbf{p})\qquad\text{a.s.}
	\]
\end{corollary}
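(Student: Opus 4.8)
The plan is to obtain the claimed equality by proving two inequalities, each of which will follow from material already developed. For the upper bound, observe that $A\cap\mathbf{B}(\mathbf{p})$ is a subset of $\mathbf{B}(\mathbf{p})$, whose pixelization in $\cS_n$ has cardinality $\sum_{x\in\cS_n}\zeta(x)$; but this crude bound is not quite enough, so instead I would work directly with $N_n=|\mathbf{B}(\mathbf{p})\cap A\cap\cS_n|=\sum_{x\in\pix(A)\cap\cS_n}\zeta(x)$, exactly the process introduced in the proof of Proposition \ref{pr:Boolean}. By \eqref{E(Nn)} we have $\limsup_n n^{-1}\Log\E(N_n)=\DimM(A)-\text{\rm Ind}(\mathbf{p})=:\theta$. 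If $\theta<0$, the Borel--Cantelli argument from that proof already shows $N_n=0$ eventually a.s., whence $\DimM(A\cap\mathbf{B}(\mathbf{p}))=0$; I would note separately that $\DimM$ of any set is nonnegative, and that when $\DimM(A)\le\text{\rm Ind}(\mathbf{p})$ the right-hand side $\DimM(A)-\text{\rm Ind}(\mathbf{p})$ should be read as (and equals) $0$ for this identity, or else restrict attention to the interesting regime $\theta>0$. So assume $\theta>0$. For the upper bound in this regime, fix $\varepsilon>0$; then $\E(N_n)\le 2^{n(\theta+\varepsilon)}$ for all large $n$, and Markov's inequality gives $\P\{N_n\ge 2^{n(\theta+2\varepsilon)}\}\le 2^{-n\varepsilon}$, which is summable. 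Borel--Cantelli then yields $N_n<2^{n(\theta+2\varepsilon)}$ for all large $n$ a.s., hence $\limsup_n n^{-1}\Log N_n\le\theta+2\varepsilon$ a.s.; letting $\varepsilon\downarrow0$ along a sequence gives $\DimM(A\cap\mathbf{B}(\mathbf{p}))\le\theta$ a.s.

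For the matching lower bound I would simply invoke the second half of the proof of Proposition \ref{pr:Boolean} verbatim: the variance estimate $\Var(N_n)\le\E(N_n)$, Chebyshev's inequality in the form \eqref{P:E(Nn)}, and the existence of an infinite set $\mathcal{N}$ along which $n^{-1}\Log\E(N_n)\to\theta$ together force $\sum_{n\in\mathcal{N}}\P\{N_n\le\tfrac12\E(N_n)\}<\infty$, so that a.s.\ $N_n>\tfrac12\E(N_n)$ for all large $n\in\mathcal{N}$. Consequently
\[
	\DimM(A\cap\mathbf{B}(\mathbf{p}))=\limsup_{n\to\infty}n^{-1}\Log N_n
	\ge\limsup_{\substack{n\to\infty\\ n\in\mathcal{N}}}n^{-1}\Log\bigl(\tfrac12\E(N_n)\bigr)=\theta
\]
almost surely. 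Combining the two bounds gives $\DimM(A\cap\mathbf{B}(\mathbf{p}))=\DimM(A)-\text{\rm Ind}(\mathbf{p})$ a.s., as claimed.

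The substantive content is entirely contained in the already-proved Proposition \ref{pr:Boolean}; the only genuinely new ingredient is the upper-bound Markov/Borel--Cantelli step on $N_n$ itself (as opposed to on $\{N_n>0\}$), and the main thing to be careful about is the bookkeeping at the boundary case $\DimM(A)=\text{\rm Ind}(\mathbf{p})$ and the convention that $\DimM$ is never negative, so that the stated identity is interpreted correctly when $\DimM(A)\le\text{\rm Ind}(\mathbf{p})$. I expect no real obstacle beyond this; the corollary is, as the text advertises, a quick consequence of the proposition and its proof.
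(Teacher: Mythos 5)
Your argument is correct, but it proceeds quite differently from the paper's. The paper proves Corollary \ref{cor:Boolean} by a ``replica argument'': it introduces a second, independent Boolean model $\mathbf{B}'(\mathbf{p}')$ with adjustable index, notes that $\mathbf{B}(\mathbf{p})\cap\mathbf{B}'(\mathbf{p}')$ is again a Boolean model with index $\text{\rm Ind}(\mathbf{p})+\text{\rm Ind}(\mathbf{p}')$, and applies Proposition \ref{pr:Boolean} twice --- once unconditionally and once conditionally on $\mathbf{B}(\mathbf{p})$ --- then squeezes $\DimM(A\cap\mathbf{B}(\mathbf{p}))$ between the two conclusions by letting $\text{\rm Ind}(\mathbf{p}')$ approach $\DimM(A)-\text{\rm Ind}(\mathbf{p})$ from either side. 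You instead work directly with the first and second moments of $N_n=\sum_{x\in\pix(A)\cap\cS_n}\zeta(x)$: your lower bound is literally the closing display of the paper's proof of Proposition \ref{pr:Boolean} (which already establishes $\DimM(A\cap\mathbf{B}(\mathbf{p}))\ge\DimM(A)-\text{\rm Ind}(\mathbf{p})$, even though only the hitting statement appears in the proposition itself), and your genuinely new ingredient is the upper bound via Markov's inequality applied to $N_n$ together with \eqref{E(Nn)}, Borel--Cantelli, and Proposition \ref{pr:Dim}. Both routes are sound; yours is the more elementary and self-contained, while the paper's replica trick (after Peres) is reusable in settings where one only has access to the hitting-probability dichotomy of Proposition \ref{pr:Boolean} rather than to the moments of $N_n$. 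One point in your favor: you correctly observe that the stated identity cannot hold literally when $\DimM(A)<\text{\rm Ind}(\mathbf{p})$, since then $N_n=0$ eventually and the left side is $0$ while the right side is negative; the paper's statement and proof silently ignore this degenerate regime, whereas you handle it explicitly. The only small thing worth making explicit in a final write-up is the identification $\pix(A\cap\mathbf{B}(\mathbf{p}))\cap\cS_n=\{x\in\pix(A)\cap\cS_n:\zeta(x)=1\}$, which holds because $\mathbf{B}(\mathbf{p})$ is a union of entire cells $Q(x)$; this is used implicitly by the paper as well.
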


\begin{corollary}\label{co:dim}
	$\DimM(\mathbf{B}(\mathbf{p}))=d-\text{\rm Ind}(\mathbf{p})\qquad\text{%
	a.s.}$
\end{corollary}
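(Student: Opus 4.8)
\textbf{Proof proposal for Corollary \ref{co:dim}.}

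The plan is to derive this as an immediate special case of Corollary \ref{cor:Boolean}, which itself follows from Proposition \ref{pr:Boolean}. First I would apply Corollary \ref{cor:Boolean} with the particular choice $A=\R^d$. One needs only the two elementary facts already recorded in the excerpt: $\DimM(\R^d)=d$, and $\R^d$ is a Borel set. Substituting $A=\R^d$ into the conclusion of Corollary \ref{cor:Boolean} gives
\[
	\DimM\left(\R^d\cap\mathbf{B}(\mathbf{p})\right)=\DimM(\R^d)-\text{\rm Ind}(\mathbf{p})=d-\text{\rm Ind}(\mathbf{p})\qquad\text{a.s.},
\]
and since $\mathbf{B}(\mathbf{p})\subseteq\R^d$ we have $\R^d\cap\mathbf{B}(\mathbf{p})=\mathbf{B}(\mathbf{p})$, which is exactly the assertion.

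If instead one prefers a self-contained argument that does not route through Corollary \ref{cor:Boolean}, I would run the same second-moment scheme directly on the counting variables $N_n:=|\mathbf{B}(\mathbf{p})\cap\cS_n|=\sum_{x\in\cS_n}\zeta(x)$. Here $\pix(\R^d)=\Z^d$, so $\E(N_n)=\sum_{x\in\cS_n}p(x)$, and the definition of $\text{\rm Ind}(\mathbf{p})$ together with $|\cS_n|\asymp 2^{nd}$ yields $\limsup_{n\to\infty}n^{-1}\Log\E(N_n)=d-\text{\rm Ind}(\mathbf{p})$. The upper bound $\DimM(\mathbf{B}(\mathbf{p}))\le d-\text{\rm Ind}(\mathbf{p})$ comes from $\DimM(\mathbf{B}(\mathbf{p}))\le\DimM(\R^d)=d$ when $\text{\rm Ind}(\mathbf{p})=0$, and more precisely from Markov's inequality plus Borel--Cantelli as in the first half of the proof of Proposition \ref{pr:Boolean} when $\text{\rm Ind}(\mathbf{p})>0$ — though one must be slightly careful, since $\DimM$ is a $\limsup$, to state the upper bound correctly; the clean way is simply to invoke Proposition \ref{pr:Dim} and the already-established Corollary \ref{cor:Boolean}. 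The matching lower bound is the Chebyshev/Paley--Zygmund step verbatim as in the proof of Proposition \ref{pr:Boolean}, using $\Var(N_n)\le\E(N_n)$.

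There is essentially no obstacle here: the statement is a corollary in the literal sense, and the only thing to check is that the hypotheses of Corollary \ref{cor:Boolean} — namely that $A$ is a nonrandom Borel set — are met by $A=\R^d$, which is trivial. The single point worth a sentence of care is the degenerate case $\text{\rm Ind}(\mathbf{p})=0$, where the formula reads $\DimM(\mathbf{B}(\mathbf{p}))=d$; this is consistent because $\mathbf{B}(\mathbf{p})\subseteq\R^d$ forces $\DimM(\mathbf{B}(\mathbf{p}))\le d$, while the lower bound half of Proposition \ref{pr:Boolean} is vacuous and one instead notes that $\sum_{x\in\cS_n}p(x)$ can grow no slower than is needed — but in fact Corollary \ref{cor:Boolean} already handles this uniformly, so I would simply cite it and be done.
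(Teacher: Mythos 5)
Your proposal is correct and matches the paper's own route exactly: the paper derives Corollary \ref{co:dim} as an immediate consequence of Corollary \ref{cor:Boolean} by taking $A=\R^d$ and using $\DimM(\R^d)=d$. The extra self-contained second-moment argument you sketch is a valid fallback but is not needed; citing Corollary \ref{cor:Boolean} suffices.
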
 

\noindent
\begin{figure}[!ht]\centering
	\begin{subfigure}[b]{0.4\textwidth}\centering
		\fbox{\includegraphics[width=\textwidth]{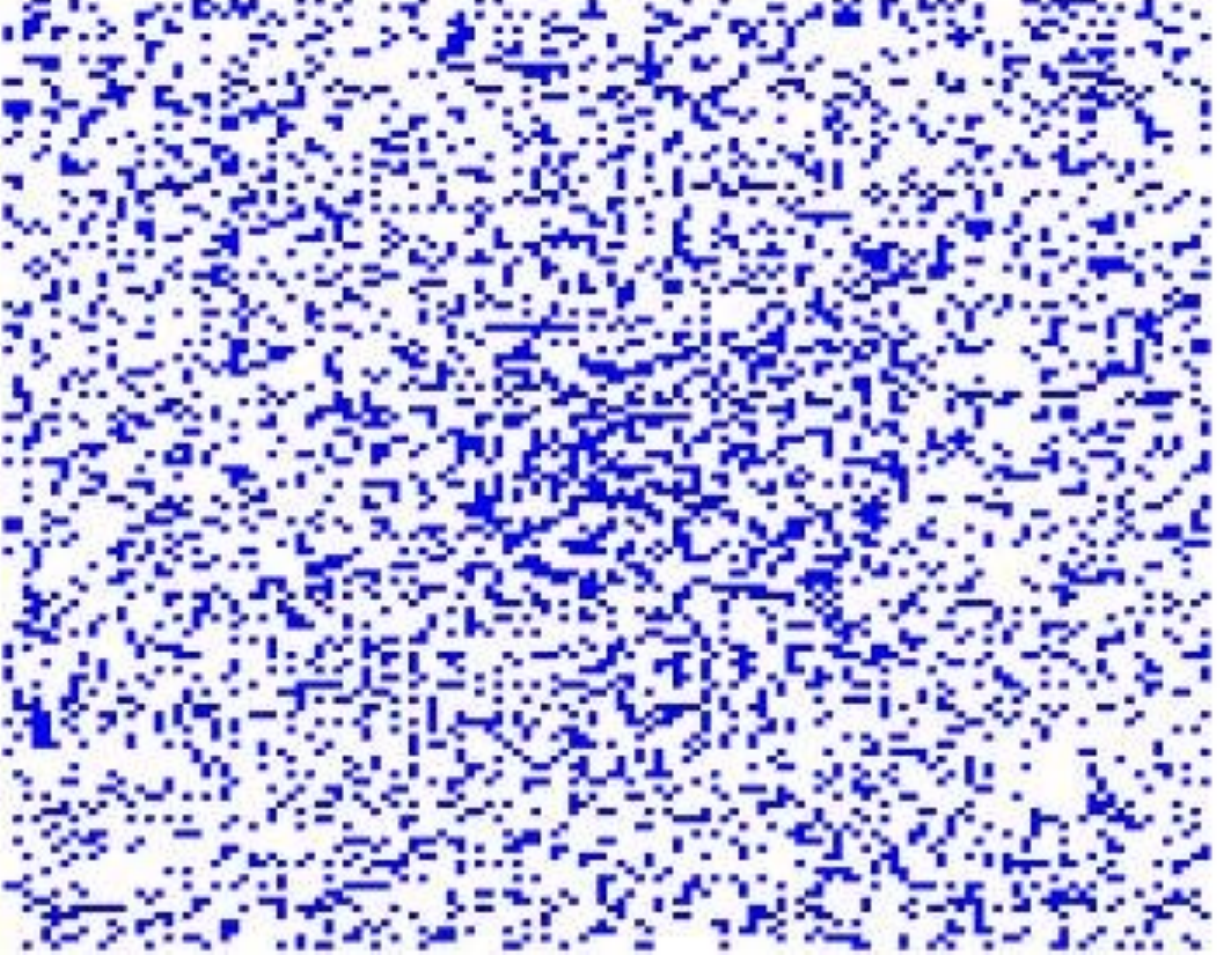}}
		\caption{$\text{Ind}({\mathbf{p}})=0.3$, 
			$\DimM(\mathbf{B}(\mathbf{p}))=1.7$}
	\end{subfigure}\hfil
	\begin{subfigure}[b]{0.4\textwidth}\centering
		\fbox{\includegraphics[width=.975\textwidth]{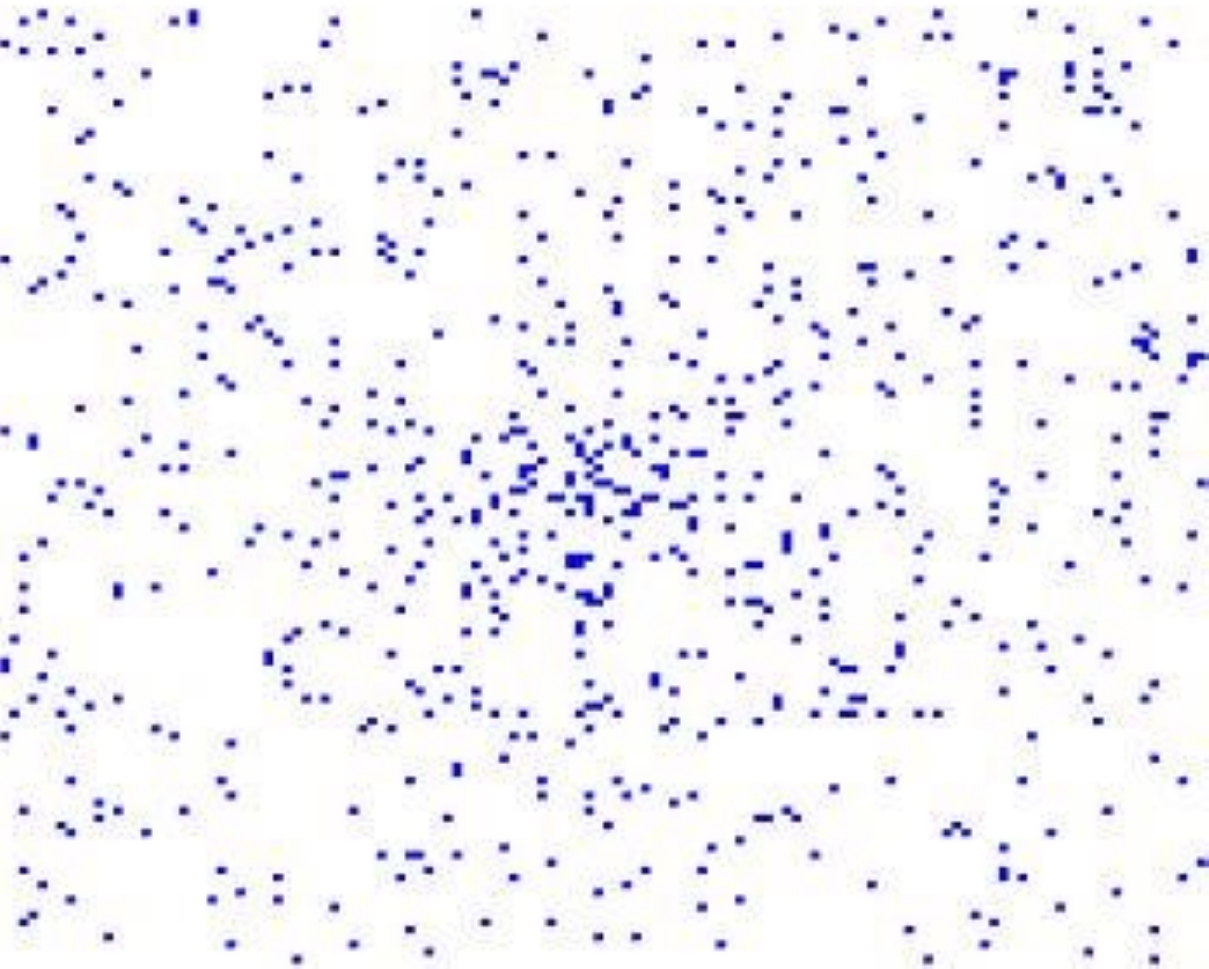}}
		\caption{$\text{Ind}({\mathbf{p}})=0.7$, 
			$\DimM(\mathbf{B}(\mathbf{p}))=1.3$}
	\end{subfigure}
	
	\caption{A simulation of two Boolean models.
		Corollary \ref{co:dim} ensures that
		the Minkowski dimensions of the two figures
		are respectively $1.7$ (left) and $1.3$ (right).}
	\label{fig:Bool}
\end{figure}

Because $\DimM(\R^d)=d$,
the following is an immediate consequence of Corollary \ref{cor:Boolean}.
Therefore, it remains to establish Corollary \ref{cor:Boolean}.
The proof uses a variation of an elegant ``replica argument'' that was introduced by
Peres \cite{Peres96} in the context of [microscopic] Hausdorff dimension of
fractal percolation processes.

\begin{proof}[Proof of Corollary \ref{cor:Boolean}]
	Let $\mathbf{B}'(\mathbf{p}')$ be an independent Boolean model
	with coverage probabilities $\mathbf{p}'=\{p'(x)\}_{x\in\Z^d}$ that have an index
	$\text{\rm Ind}(\mathbf{p}')$.
	Define $q(x):=p(x)\times p'(x)$ for all $x\in\Z^d$.
	It is then easy to see
	that $\mathbf{C}(\mathbf{q}):=
	\mathbf{B}'(\mathbf{p}') \cap \mathbf{B}(\mathbf{p})$ is a Boolean model with
	coverage probabilities $\mathbf{q}=\{q(x)\}_{x\in\Z^d}$.
	Since $\text{\rm Ind}(\mathbf{q})=\text{\rm Ind}(\mathbf{p})+
	\text{\rm Ind}(\mathbf{p}')$, Proposition
	\ref{pr:Boolean} implies that
	\[
		\P\left\{ |A\cap\mathbf{C}(\mathbf{q})| =\infty\right\}
		=\begin{cases}
			1&\text{if $\text{\rm Ind}(\mathbf{p})+
				\text{\rm Ind}(\mathbf{p}')<\DimM(A)$},\\
			0&\text{if $\text{\rm Ind}(\mathbf{p})+
				\text{\rm Ind}(\mathbf{p}')>\DimM(A)$}.
		\end{cases}
	\]
	At the same time, one can apply Proposition \ref{pr:Boolean}
	conditionally in order to see that almost surely,
	\begin{align*}
		\P\left(|A\cap\mathbf{C}(\mathbf{q})| =
			\infty\mid \mathbf{B}(\mathbf{p})\right)
			&=\P\left(|A\cap\mathbf{B}(\mathbf{p})\cap\mathbf{B}'(\mathbf{p'})|
			=\infty\mid \mathbf{B}(\mathbf{p})\right)\\
		&=\begin{cases}
				1&\text{if $\DimM\left(A\cap\mathbf{B}(\mathbf{p})\right)
					>\text{\rm Ind}(\mathbf{p'})$},\\
				0&\text{if $\DimM\left(A\cap\mathbf{B}(\mathbf{p})\right)
					<\text{\rm Ind}(\mathbf{p'})$}.
			\end{cases}
	\end{align*}
	A comparison of the preceding two displays yields
	the following almost sure assertions:
	\begin{enumerate}
	\item If $\text{\rm Ind}(\mathbf{p})+ \text{\rm Ind}(\mathbf{p}')<
		\DimM(A)$,
		then $\DimM\left(A\cap\mathbf{B}(\mathbf{p})\right)\ge
		\text{\rm Ind}(\mathbf{p'})$
		a.s.; and
	\item If $\text{\rm Ind}(\mathbf{p})+ \text{\rm Ind}(\mathbf{p}')>\DimM(A)$,
		then $\DimM\left(A\cap
		\mathbf{B}(\mathbf{p})\right)\le\text{\rm Ind}(\mathbf{p'})$
		a.s.
	\end{enumerate}
	Since $\mathbf{p}'$ can have any arbitrary index $\text{\rm Ind}(\mathbf{p}')>0$
	that one wishes, the corollary follows.
\end{proof}

\section{Transient L\'evy processes}

Let $X:=\{X_t\}_{t\ge0}$ be a  L\'evy process on $\R^d$.
That is, $X$ is a strong Markov process that has c\`adl\`ag paths,
takes values in $\R^d$, $X_0=0$, and $X$ has stationary and independent increments. 
See,  for example, Bertoin \cite{Bertoin96} for a pedagogic account.
In this section we assume that $X$ is transient and compute the macroscopic dimension
of the range $\cR_{_X}$ of $X$, where we recall the range is the following random set:
\[
	\cR_{_X} := \bigcup_{t\ge0}\{X_t\}.
\]

\subsection{The potential measure}
Let $\U_{_X}$ denote the potential measure of $X$; that is,
\begin{equation}\label{Pot:Meas}
	\U_{_X}(A) := \int_0^\infty \P\{X_t\in A\}\,\d t=
	\E\int_0^\infty \1_A(X_t)\,\d t.
\end{equation}
Throughout we assume that $X$ is transient; equivalently, $\U_{_X}$ is
a Radon measure. The following shows that the macroscopic Minkowski
dimension of the range of $X$ is linked intimately to the potential measure
of $X$.

\begin{theorem}\label{th:Range:Levy}
	With probability one,
	\[
		\DimM(\cR_{_X}) = \inf\left\{\alpha>0:\
		\int_{\R^d}\frac{\U_{_X}(\d x)}{1+|x|^\alpha}<\infty\right\}.
	\]
\end{theorem}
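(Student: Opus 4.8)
Here $\gamma$ denotes the infimum on the right-hand side. The plan is to prove $\DimM(\cR_{_X})\le\gamma$ and $\DimM(\cR_{_X})\ge\gamma$ almost surely, the second only after reducing it to a positive-probability statement via a zero--one law. The workhorse is a two-sided comparison between hitting probabilities of unit cubes and the potential measure. For $x\in\Z^d$ let $T$ be the first hitting time of $Q(x)$ by $X$ --- a stopping time, as $X$ is a Hunt process --- and $p_{_X}(x):=\P\{\cR_{_X}\cap Q(x)\neq\varnothing\}=\P\{T<\infty\}$; on $\{T<\infty\}$ one has $X_T\in\overline{Q(x)}$, and $\{X_{T+s}-X_T\}_{s\ge0}$ is an independent copy of $X$. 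Running that copy for one unit of time shows the expected occupation time of $B(x;4)$ during $[T,T+1]$ is at least $c_0:=\int_0^1\P\{|X_s|\le2\}\,\d s\in(0,1]$ (because $|X_{T+s}-X_T|\le2$ forces $X_{T+s}\in B(x;4)$), giving $p_{_X}(x)\le c_0^{-1}\U_{_X}(B(x;4))$; running it forever shows the expected total occupation time of $Q(x)$ after $T$ is at most $M_1:=\U_{_X}(B(0;1))\in(0,\infty)$ (since $Q(x)-X_T\subseteq B(0;1)$), giving $p_{_X}(x)\ge M_1^{-1}\U_{_X}(Q(x))$. Writing $\mu_n:=\U_{_X}\bigl(\bigcup_{x\in\cS_n\cap\Z^d}Q(x)\bigr)$, an elementary dyadic computation identifies $\gamma=\inf\{\alpha>0:\sum_n2^{-n\alpha}\mu_n<\infty\}$, hence $\limsup_n n^{-1}\Log\mu_n=\gamma$; summing the comparison over the $n$-th shell gives $M_1^{-1}\mu_n\le\E|\pix(\cR_{_X})\cap\cS_n|\le C(\mu_{n-1}+\mu_n+\mu_{n+1})$, since each point of $\R^d$ lies in $B(x;4)$ for only $O_d(1)$ lattice points and the $4$-neighborhood of $\bigcup_{x\in\cS_n}Q(x)$ meets only three adjacent cube-shells. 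In particular $\limsup_n n^{-1}\Log\E|\pix(\cR_{_X})\cap\cS_n|=\gamma$.

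The upper bound is then quick: convergence of $\int\U_{_X}(\d x)/(1+|x|^{\gamma+\varepsilon/2})$ forces $\mu_n\le2^{n(\gamma+\varepsilon/2)}$ for large $n$, so $\E|\pix(\cR_{_X})\cap\cS_n|\le C2^{n(\gamma+\varepsilon)}$; Markov's inequality and Borel--Cantelli give $|\pix(\cR_{_X})\cap\cS_n|<2^{n(\gamma+2\varepsilon)}$ eventually, a.s., and Proposition \ref{pr:Dim} yields $\DimM(\cR_{_X})\le\gamma+2\varepsilon$ a.s.; let $\varepsilon\downarrow0$. For the lower bound I would first note that $\DimM(\cR_{_X})$ is a.s. a deterministic constant: for each $t_0$, $\cR_{_X}=\{X_s:s\le t_0\}\cup\bigl(X_{t_0}+\{X_t-X_{t_0}:t\ge t_0\}\bigr)$, the first set is a.s. bounded (hence $\DimM=0$), and $\DimM$ is translation invariant (a fixed translation multiplies shell-counts by at most $3^d$ and shifts the shell index by $O(1)$), so $\DimM(\cR_{_X})$ is measurable with respect to $\sigma\{X_t-X_{t_0}:t\ge t_0\}$ for every $t_0$, hence with respect to the trivial tail $\sigma$-field of $X$; thus it suffices to show $\P\{\DimM(\cR_{_X})\ge\gamma-\varepsilon\}>0$ for each $\varepsilon\in(0,\gamma)$ (the case $\gamma=0$ being already covered).

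Set $N_n:=|\pix(\cR_{_X})\cap\cS_n|$. Decomposing the event that $\cR_{_X}$ meets both $Q(x)$ and $Q(y)$ according to which cube is entered first and applying the strong Markov property at that hitting time gives
\[
	\P\{\cR_{_X}\cap Q(x)\neq\varnothing,\ \cR_{_X}\cap Q(y)\neq\varnothing\}\ \le\ 2^d\bigl(p_{_X}(x)\,\bar p_{_X}(y-x)+p_{_X}(y)\,\bar p_{_X}(x-y)\bigr),
\]
where $\bar p_{_X}(v):=\max\{p_{_X}(w):w\in\Z^d,\ |w-v|\le1\}$; summing over $x,y\in\cS_n$ and using the comparison once more to bound $\sum_{w\in B(0;2^{n+c})}p_{_X}(w)$ by a constant multiple of $\U_{_X}(B(0;2^{n+c}))$, one obtains $\E[N_n^2]\le C\,\E N_n\cdot\U_{_X}(B(0;2^{n+c}))$ for a suitable integer $c$. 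The point is now to choose a subsequence $n_k\uparrow\infty$ along which simultaneously $\mu_{n_k}\ge2^{n_k(\gamma-\varepsilon)}$ and $\U_{_X}(B(0;2^{n_k+c}))\le C_0\mu_{n_k}$ --- i.e.\ at which the potential measure is, up to a bounded factor, concentrated in a single dyadic shell. Such scales occur infinitely often because $n\mapsto\U_{_X}(B(0;2^n))$ is nondecreasing with exponential growth rate exactly $\gamma>0$, so over any sufficiently long block of scales its total must increase by at least a fixed factor, and this growth can be arranged to take place at a scale where the accumulated mass is already $\ge2^{n(\gamma-\varepsilon)}$. Along this subsequence $\E[N_{n_k}^2]\le C_1(\E N_{n_k})^2$, so the Paley--Zygmund inequality gives $\P\{N_{n_k}\ge\tfrac12\E N_{n_k}\}\ge1/(4C_1)$; reverse Fatou then yields $\P\{N_n\ge\tfrac12\E N_n\text{ for infinitely many }n\}\ge1/(4C_1)>0$, and on that event $\DimM(\cR_{_X})=\limsup_n n^{-1}\Log N_n\ge\limsup_k n_k^{-1}\Log(\tfrac12\E N_{n_k})\ge\gamma-\varepsilon$. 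The zero--one law upgrades this to an almost sure inequality, and letting $\varepsilon\downarrow0$ finishes the argument.

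The main obstacle is this second-moment step. Without the careful choice of scales, the correlation bound produces $\U_{_X}(B(0;2^n))$ --- the potential mass of the whole ball --- in place of $\mu_n$, the mass of one shell; since these can differ by an unbounded factor, the Paley--Zygmund lower bound on $\P\{N_{n_k}\ge\tfrac12\E N_{n_k}\}$ would vanish and the argument would collapse. So the crux is really the slightly delicate (but elementary) fact that a nondecreasing sequence of exponential growth rate $\gamma>0$ has infinitely many scales at which one step's increment dominates, up to constants, everything accumulated so far --- equivalently, scales at which $\U_{_X}$ is essentially supported on a single dyadic shell --- together with verifying the correlation inequality, the one genuinely new input, via the strong Markov property at the cubes' hitting times.
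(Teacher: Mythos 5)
Your overall architecture --- two-sided comparison of cube-hitting probabilities with the potential measure, first and second moments, Paley--Zygmund, Borel--Cantelli, plus a zero--one law to upgrade positive probability to an almost sure statement --- is the same as the paper's, and your explicit tail-field argument for the zero--one law is a welcome addition (the paper is terse there). The genuine problem is the step you yourself identify as the crux. Because you work shell by shell, your first moment is $\asymp\mu_n$ while your second-moment bound involves $\U_{_X}(B(0;2^{n+c}))$, and you rescue Paley--Zygmund by selecting scales at which the whole ball's potential mass is comparable to a single shell's. Such scales do exist, but your justification is not a proof and rests on a false assertion: since $\limsup_n n^{-1}\Log\U_{_X}(B(0;2^n))=\gamma$ is only a $\limsup$, it is \emph{not} true that ``over any sufficiently long block of scales the total must increase by at least a fixed factor''; the nondecreasing sequence $U_n:=\U_{_X}(B(0;2^n))$ may stagnate over arbitrarily long blocks. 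A correct argument is a pigeonhole recursion: if for every large $n$ one had either $\mu_n<\delta\,2^{n(\gamma-\varepsilon)}$ or $U_n>C_0\mu_n$, then (using $\mu_n\gtrsim U_n-U_{n-1}$) one gets $U_n\le(1+1/C_0)\,U_{n-1}+\delta\,2^{n(\gamma-\varepsilon)}$ up to boundary terms, and iterating with $C_0$ so large that $1+1/C_0<2^{\gamma-\varepsilon}$ forces $U_n=O(2^{n(\gamma-\varepsilon)})$, contradicting the growth rate $\gamma$. Even after that you must pass from $U_n$ to $U_{n+c}$, which requires a doubling bound you never establish.

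That doubling bound is precisely what makes the entire scale-selection detour unnecessary, and it is how the paper proceeds. One has the weak unimodality $\U_{_X}(B(x;r))\le 4^d\,\U_{_X}(B(0;r))$ for all $x$ (Lemma \ref{lem:U3}: cover $B(x;r)$ by $4^d$ balls of radius $r/2$ and use $\U_{_X}(B(y;r/2))\le\U_{_X}(B(0;r))$, which is your own occupation-time estimate), whence $\U_{_X}(B(0;2^{n+1}))\le 16^d\,\U_{_X}(B(0;2^n))$. Running the moment computation over the full ball $B(0;2^n)$ instead of the shell $\cS_n$ then gives a first moment $\gtrsim\U_{_X}(B(0;2^n))$ and a second moment $\lesssim\U_{_X}(B(0;2^{n+1}))\,\U_{_X}(B(0;2^{n+2}))\lesssim[\U_{_X}(B(0;2^n))]^2$, so the Paley--Zygmund lower bound is uniform in $n$; reverse Fatou along the subsequence where $n^{-1}\Log\U_{_X}(B(0;2^n))\to\gamma$ finishes the lower bound with no choice of special scales. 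So your proof is completable, but as written its key new lemma is unproved and its stated justification is wrong; either prove it by the recursion above or, more simply, switch from shells to balls and invoke doubling.
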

Theorem \ref{th:Range:Levy:FT} contains an alternative formula
for $\DimM(\cR_{_X})$, in terms of the L\'evy exponent of $X$, which
is reminiscent of an old formula of Pruitt \cite{Pruitt69} for the [micrsoscopic]
Hausdorff dimension of $\cR_{_X}$. We refer to Ref.'s \cite{KX05,KX09,KXZ} 
for more recent developments on micrsoscopic fractal properties of L\'evy 
processes, based on potential theory of additive L\'evy processes.

\begin{example}
	Consider the case that $X:=\{X_t\}_{t\ge0}$ is a symmetric
	$\beta$-stable process on $\R^d$ for some $0<\beta\le 2$.
	Transience is equivalent to the condition $\alpha<d$.
	This condition is known
	to imply that $\U_{_X}(\d x)/\d x\propto \|x\|^{-d+\beta}$
	for all $x\in\R^d\setminus\{0\}$ \cite{Bertoin96,PS71}.
	Therefore,
	$\int_{\R^d} (1+|x|^\alpha)^{-1}\,\U_{_X}(\d x)<\infty$ iff
	$\int_{|x|>1} |x|^{-\alpha-d+\beta}\,\d x<\infty$ iff
	$\alpha>\beta$. Theorem \ref{th:Range:Levy} then implies that
	$\DimM(\cR_{_X})=\beta$ a.s. This fact is essentially
	due to Barlow and Taylor \cite{BarlowTaylor}.
\end{example}

\begin{remark}
	Recall that the measure $\U_{_X}$ is finite because $X$ is transient.
	As a result, $\int_{\R^d}(1+|x|^\alpha)^{-1}\,\U_{_X}(\d x)$
	converges iff $\int_{|x|>1}|x|^{-\alpha}\,\U_{_X}(\d x)<\infty$.
	One can then deduce from this fact,  from the definition
	\eqref{Pot:Meas} of $\U_{_X}$,
	and from Theorem \ref{th:Range:Levy}  that
	\[
		\DimM(\cR_{_X}) = \inf\left\{\alpha>0:\
		\int_0^\infty \E\left(|X_t|^{-\alpha}; |X_t|>1\right)\d t
		<\infty\right\}\qquad\text{a.s.}
	\]
	This is the macroscopic analogue of a result of Pruitt \cite[p.\ 374]{Pruitt69}.
\end{remark}

\begin{OP}
	It is natural to ask if there is a nice formula for
	$\DimM(A\cap\cR_{_X})$ when $A\subseteq\R^d$ is Borel and
	nonrandom. We do not have an answer to this question when $A$ is
	not  ``macroscopically self-similar.''
\end{OP}

The proof of Theorem \ref{th:Range:Levy} hinges on a few
prefatory technical results. The first is a more-or-less well-known
set of bounds on the potential measure of a ball.

\begin{lemma}\label{lem:U1}
	For every $x\in\R^d$ and $r>0$,
	\[
		\U_{_X}(B(x;r)) \le \U_{_X}(B(0;2r))\cdot\P\left\{ \overline{\cR_{_X}}
		\cap B(x;r)\neq\varnothing\right\}.
	\]
\end{lemma}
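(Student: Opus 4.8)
The plan is to condition on the first entrance of $X$ into the ball $B(x;r)$ and then use the Markov property together with the translation-invariance of Lévy processes. First I would introduce the first hitting time
\[
	T := \inf\{t\ge 0:\ X_t\in B(x;r)\},
\]
with the usual convention $\inf\varnothing = +\infty$. On the event $\{T=\infty\}$ the process never enters $B(x;r)$, so $\int_0^\infty \1_{B(x;r)}(X_t)\,\d t = 0$ there; hence, writing out the definition \eqref{Pot:Meas} of $\U_{_X}$ and splitting according to whether $T<\infty$,
\[
	\U_{_X}(B(x;r)) = \E\left( \1_{\{T<\infty\}}\int_T^\infty \1_{B(x;r)}(X_t)\,\d t\right).
\]

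Next I would apply the strong Markov property at time $T$. Conditionally on $\{T<\infty\}$ and on $X_T$, the post-$T$ process $\{X_{T+s}-X_T\}_{s\ge0}$ is again a copy of $X$ started from $0$, independent of $\mathcal F_T$. Therefore the inner expectation, given $X_T=y$, equals
\[
	\E\int_0^\infty \1_{B(x;r)}(y + X_s)\,\d s = \U_{_X}\big(B(x;r)-y\big) = \U_{_X}\big(B(x-y;r)\big).
\]
Since $y=X_T\in \overline{B(x;r)}$ (the closure appears because the hitting time may land on the boundary, or one works with $\overline{\cR_{_X}}$ as in the statement), we have $|x-y|\le r$, so $B(x-y;r)\subseteq B(0;2r)$ and hence $\U_{_X}(B(x-y;r)) \le \U_{_X}(B(0;2r))$ by monotonicity of the measure. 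Plugging this bound back in gives
\[
	\U_{_X}(B(x;r)) \le \U_{_X}(B(0;2r))\cdot \P\{T<\infty\} = \U_{_X}(B(0;2r))\cdot \P\left\{\overline{\cR_{_X}}\cap B(x;r)\neq\varnothing\right\},
\]
which is the claimed inequality.

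The main obstacle is the measurability and boundary bookkeeping around the hitting time $T$: one must check that $T$ is a genuine stopping time for a suitable (right-continuous, completed) filtration so that the strong Markov property applies, and one must be careful that $X_T$ need not lie in the half-open box $B(x;r)$ but only in its closure — this is precisely why the statement is phrased with $\overline{\cR_{_X}}$. These points are standard for Lévy processes (see Bertoin \cite{Bertoin96}), but they are the only non-routine part; the rest is the translation-invariance computation above. A minor alternative, avoiding hitting-time technicalities, is to run the same argument with the last exit time from $B(x;r)$ or to discretize time and pass to the limit, but conditioning on first entrance is the cleanest route.
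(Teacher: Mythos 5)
Your proposal is correct and follows essentially the same route as the paper: condition on the first entrance time $T(x;r)$, apply the strong Markov property to reduce the post-$T$ occupation of $B(x;r)$ to $\U_{_X}(B(x-X_T;r))$, and use the triangle inequality to bound this by $\U_{_X}(B(0;2r))$. Your extra care about $X_T$ possibly lying only in the closure of the box (hence the appearance of $\overline{\cR_{_X}}$) is a point the paper glosses over but does not change the argument.
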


\begin{proof}
	Let $\inf\varnothing:=\infty$, and consider the stopping time
	\begin{equation}\label{T}
		T(x;r) := \inf\{t\ge0: X_t\in B(x;r) \}.
	\end{equation}
	We can write $\U_{_X}(B(x;r))$ in the following equivalent form:
	\begin{equation}\label{U}
		\E\left(\int_0^\infty\1_{B(x-X_{T(x;r)},r)}
		\left(X_{t+T(x;r)}-X_{T(x;r)}\right)\d t\cdot\1_{\{T(x;r)<\infty\}}\right).
	\end{equation}
	Since $|X_{T(x;r)}-x|<r$ a.s.\ on the event $\{T(x;r)<\infty\}$,
	the triangle inequality implies that
	$B(x-X_{T(x;r)}\,,r)\subseteq B(0;2r)$ a.s.\ on $\{T(x;r)<\infty\}$,
	and hence
	\[
		\U_{_X}(B(x;r))\le\U_{_X}(B(0;2r))\cdot\P\{T(x;r)<\infty\}.
	\]
	This is another way to state the lemma.
\end{proof}

The next result is a standard upper bound on the hitting probability
of a ball.

\begin{lemma}\label{lem:U2}
	For every $x\in\R^d$ and $r>0$,
	\[
		\U_{_X}(B(x;2r)) \ge \U_{_X}(B(0;r))\cdot\P\left\{ \overline{\cR_{_X}}
		\cap B(x;r)\neq\varnothing\right\}.
	\]
\end{lemma}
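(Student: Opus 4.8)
The plan is to run the same first-passage decomposition as in the proof of Lemma~\ref{lem:U1}, but now in the reverse direction: instead of bounding the potential of the ball $B(x;r)$ from above by the potential of a larger centered ball, I want to bound the potential of the larger ball $B(x;2r)$ from below by the potential of a smaller centered ball times the hitting probability. The key observation is that on the event $\{T(x;r)<\infty\}$ we have $|X_{T(x;r)}-x|<r$ almost surely, so by the triangle inequality $B(0;r)\subseteq B(x-X_{T(x;r)};2r)$ almost surely on that event; this is exactly the containment that replaces the one used in Lemma~\ref{lem:U1}, with the roles of the radii swapped.

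Concretely, the first step is to write, using the strong Markov property at the stopping time $T(x;r)$ defined in \eqref{T},
\[
	\U_{_X}(B(x;2r)) \ge
	\E\!\left(\int_0^\infty \1_{B(x-X_{T(x;r)};2r)}\!\left(X_{t+T(x;r)}-X_{T(x;r)}\right)\d t
	\cdot \1_{\{T(x;r)<\infty\}}\right),
\]
where the inequality comes from discarding the contribution of the process before time $T(x;r)$ (the integrand over $[0,\infty)$ dominates the integral over $[T(x;r),\infty)$ after the obvious change of variables, and all integrands are nonnegative). The second step is to invoke the containment $B(0;r)\subseteq B(x-X_{T(x;r)};2r)$, valid a.s.\ on $\{T(x;r)<\infty\}$, to replace the indicator of $B(x-X_{T(x;r)};2r)$ by the smaller indicator of $B(0;r)$, which only decreases the expression. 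The third step is to condition on $\cF_{T(x;r)}$ and apply the strong Markov property: the post-$T$ process $\{X_{t+T(x;r)}-X_{T(x;r)}\}_{t\ge0}$ is a copy of $X$ independent of $X_{T(x;r)}$, so the conditional expectation of $\int_0^\infty \1_{B(0;r)}(X_{t+T}-X_T)\,\d t$ equals $\U_{_X}(B(0;r))$, a deterministic constant. Pulling it out leaves $\U_{_X}(B(0;r))\cdot\P\{T(x;r)<\infty\}$, and since $\{T(x;r)<\infty\}$ coincides (modulo the closure, which contributes a null set issue only on the boundary) with the event $\{\overline{\cR_{_X}}\cap B(x;r)\neq\varnothing\}$, we obtain the claimed bound.

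I do not expect a serious obstacle here; the argument is essentially the mirror image of Lemma~\ref{lem:U1}. The one point that deserves a line of care is the discarding step producing the initial inequality: one must note that $\int_0^\infty \1_A(X_{t+T}-X_T)\,\d t \le \int_0^\infty \1_{A+X_T}(X_{s+T})\,\d s \le \int_0^\infty \1_{A+X_T}(X_s)\,\d s$ is not literally what is needed; rather one writes $\U_{_X}(B(x;2r))=\E\int_0^\infty\1_{B(x;2r)}(X_s)\,\d s \ge \E\bigl(\1_{\{T<\infty\}}\int_T^\infty \1_{B(x;2r)}(X_s)\,\d s\bigr)$ and then changes variables $s=t+T$ and recenters by $X_T$, using that $B(x;2r)-X_T = B(x-X_T;2r)$. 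The other minor point is the identification of $\{T(x;r)<\infty\}$ with $\{\overline{\cR_{_X}}\cap B(x;r)\neq\varnothing\}$: since $B(x;r)$ is a half-open box, a path can meet its closure only at the boundary without meeting the box itself, but right-continuity of $X$ together with the fact that $\cR_{_X}$ is dense in $\overline{\cR_{_X}}$ makes the two events differ by at most a set where the path is pinned to the boundary, which is handled exactly as in the corresponding remark for Lemma~\ref{lem:U1}.
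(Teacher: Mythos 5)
Your argument is correct and is essentially the paper's own proof: both use the first-passage time $T(x;r)$, the triangle-inequality containment $B(0;r)\subseteq B(x-X_{T(x;r)};2r)$ on $\{T(x;r)<\infty\}$, and the strong Markov property applied to the post-$T$ increments, after discarding the contribution of the path before time $T(x;r)$. The extra care you take with the discarding step and with identifying $\{T(x;r)<\infty\}$ with $\{\overline{\cR_{_X}}\cap B(x;r)\neq\varnothing\}$ is sound and only makes explicit what the paper leaves implicit.
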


\begin{proof}
	By the triangle inequality,
	$B(x-X_{T(x;r)}\,,2r)\supset B(0;r)$
	almost surely on the event $\{T(x;r)<\infty\}$, where $T(x;r)$ was
	defined in \eqref{T}. Therefore, we apply \eqref{U} together
	with the strong Markov property in order to see that
	\[
		\U_{_X}(B(x;2r)) \ge \U_{_X}(B(0;r))\cdot\P\{T(x;r)<\infty\}.
	\]
	This is another way to write the lemma.
\end{proof}

The following is a ``weak unimodality'' result for
the potential measure.

\begin{lemma}\label{lem:U3}
	$\U_{_X}(B(x;r))\le 4^d\U_{_X}(B(0;r))$ for all $x\in\R^d$ and $r>0$.
\end{lemma}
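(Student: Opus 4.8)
The plan is to deduce the claimed inequality $\U_{_X}(B(x;r))\le 4^d\,\U_{_X}(B(0;r))$ from the two preceding lemmas by eliminating the hitting probability. Observe that Lemma \ref{lem:U1} gives $\U_{_X}(B(x;r))\le \U_{_X}(B(0;2r))\cdot\P\{\overline{\cR_{_X}}\cap B(x;r)\neq\varnothing\}$, and the hitting probability there is at most $1$, so the crude bound $\U_{_X}(B(x;r))\le\U_{_X}(B(0;2r))$ is immediate. The substance of the lemma is therefore to replace the radius $2r$ by $r$ at the cost of the constant $4^d$; in other words, it suffices to prove the \emph{doubling-type} estimate $\U_{_X}(B(0;2r))\le 4^d\,\U_{_X}(B(0;r))$ for all $r>0$.

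To get this, I would cover the cube $B(0;2r)$ by translates of smaller cubes and exploit Lemma \ref{lem:U2}, which bounds a shifted potential in terms of a centered one. Concretely, $B(0;2r)=[-2r,2r)^d$ can be partitioned into $4^d$ translates $B(x_i;r/2)$ of the half-radius cube, where the centers $x_i$ range over $\{-\tfrac{3r}{2},-\tfrac{r}{2},\tfrac{r}{2},\tfrac{3r}{2}\}^d$ (a grid of $4^d$ points, each translate being a cube of side $r$, i.e. of ``radius'' $r/2$ in the paper's convention). For each such translate, Lemma \ref{lem:U2} with radius $r/2$ gives
\[
	\U_{_X}\!\left(B(x_i;r)\right)\ \ge\ \U_{_X}\!\left(B(0;r/2)\right)\cdot\P\left\{\overline{\cR_{_X}}\cap B(x_i;r/2)\neq\varnothing\right\},
\]
while Lemma \ref{lem:U1} with the same parameters gives the reverse-type bound $\U_{_X}(B(x_i;r/2))\le \U_{_X}(B(0;r))\cdot\P\{\overline{\cR_{_X}}\cap B(x_i;r/2)\neq\varnothing\}$. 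Chaining these — together with the monotonicity $\U_{_X}(B(0;r/2))\le\U_{_X}(B(0;r))$ and $\U_{_X}(B(x_i;r/2))\le\U_{_X}(B(x_i;r))$ — lets one compare $\U_{_X}(B(x_i;r/2))$ to $\U_{_X}(B(0;r))$ up to a bounded factor, and then summing the $4^d$ pieces of the partition of $B(0;2r)$ yields $\U_{_X}(B(0;2r))\le\sum_i\U_{_X}(B(x_i;r/2))\le 4^d\,\U_{_X}(B(0;r))$, provided the constants line up. Feeding this back into the crude bound $\U_{_X}(B(x;r))\le\U_{_X}(B(0;2r))$ completes the argument.

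The main obstacle I anticipate is bookkeeping the radii and the constant so that it comes out to exactly $4^d$ rather than something larger: the naive chaining above threads through both lemmas and risks picking up an extra factor, so the right move is probably the cleaner one of directly covering $B(0;2r)$ by $4^d$ translates $B(x_i;r)$ of the \emph{full}-radius cube (centers on a grid of spacing $r$) and applying only Lemma \ref{lem:U1} and Lemma \ref{lem:U2} once each, to sandwich each $\U_{_X}(B(x_i;r))$ between a hitting probability times $\U_{_X}(B(0;r))$ and the same hitting probability times $\U_{_X}(B(0;2r))$ — no, more simply still, one applies Lemma \ref{lem:U1} to each translate to get $\U_{_X}(B(x_i;r))\le\U_{_X}(B(0;2r))$ and that is not small enough. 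So the genuinely economical route is: write $B(0;2r)$ as a disjoint union of $4^d$ cubes of radius $r/2$, bound each by $\U_{_X}(B(0;r))$ using Lemma \ref{lem:U1} (radius $r/2$, which gives $\U_{_X}(B(x_i;r/2))\le\U_{_X}(B(0;r))\cdot 1$), and sum. That delivers $\U_{_X}(B(0;2r))\le 4^d\U_{_X}(B(0;r))$ cleanly, and the final inequality follows. The only point requiring care is checking that $4^d$ cubes of side $r$ (radius $r/2$) indeed tile the cube of side $4r$ (radius $2r$), which is just the arithmetic $4r/r=4$ in each coordinate.
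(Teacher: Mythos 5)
Your final argument is correct and is essentially the paper's own proof: both tile a cube by $4^d$ translates of a cube of radius $r/2$ and bound each piece by $\U_{_X}(B(0;r))$ via Lemma \ref{lem:U1} with the hitting probability estimated by $1$. The only cosmetic difference is that the paper applies this covering to $B(x;r)$ directly, whereas you first pass through the intermediate bound $\U_{_X}(B(x;r))\le\U_{_X}(B(0;2r))$ and then tile $B(0;2r)$; Lemma \ref{lem:U2} is not needed in either version.
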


\begin{proof}
	The proof will use the following elementary covering property of Euclidean spaces:
	For every $x\in\R^d$ and $r>0$ there exist points  $y_1,\ldots,y_{4^d}\in B(x;r)$
	such that $B(x;r) =\cup_{1\le i\le 4^d} B(y_i\,,r/2).$
	This leads to the following ``volume-doubling'' bound: For all $r>0$ and $x\in\R^d$,
	\begin{equation}\label{eq:Vol:Dbl}
		\U_{_X}(B(x;r)) \le 4^d\sup_{y\in B(x,r)}\U_{_X}(B(y;r/2)).
	\end{equation}
	This inequality yields the lemma since
	$\U_{_X}(B(y;r/2))\le \U_{_X}(B(0;r))$ for all $y\in\R^d$
	and $r>0$, thanks to Lemma \ref{lem:U1}.
\end{proof}

The next result presents bounds for the probability that the
pixelization of the range of $X$ hits singletons. Naturally, both
bounds are in terms of the potential measure of $X$.

\begin{lemma}\label{lem:U4}
	There exist finite constants $c_2>1>c_1>0$ such that,
	for all $x\in\Z^d$,
	\[
		c_1\U_{_X}(Q(x))\le\P\left\{ x\in\pix\left(\cR_{_X}\right)\right\}\le
		c_2  \U_{_X}(B(x;2)).
	\]
\end{lemma}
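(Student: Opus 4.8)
The plan is to relate the event $\{x\in\pix(\cR_{_X})\}$ to the hitting event $\{\overline{\cR_{_X}}\cap B(x;r)\neq\varnothing\}$ for suitable radii $r$, and then invoke Lemmas \ref{lem:U1}--\ref{lem:U3} to convert hitting probabilities into potential-measure quantities. For the \emph{upper} bound, observe that if $x\in\pix(\cR_{_X})$ then $\cR_{_X}\cap Q(x)\neq\varnothing$, and since $Q(x)\subseteq B(x;1)\subseteq B(x;2)$ (after accounting for the half-open-cube conventions in \eqref{Q}), this forces $\overline{\cR_{_X}}\cap B(x;1)\neq\varnothing$. Lemma \ref{lem:U2} (applied with $r=1$, so that $B(x;2r)=B(x;2)$) gives
\[
	\P\left\{x\in\pix(\cR_{_X})\right\}\le\P\left\{\overline{\cR_{_X}}\cap B(x;1)\neq\varnothing\right\}
	\le\frac{\U_{_X}(B(x;2))}{\U_{_X}(B(0;1))},
\]
and since $\U_{_X}(B(0;1))$ is a fixed finite positive constant depending only on the process, we may take $c_2:=1/\U_{_X}(B(0;1))$, enlarging it to be $>1$ if necessary.

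For the \emph{lower} bound, I would argue in the reverse direction: a first-moment estimate shows that if the range spends a positive expected amount of time near $x$, then with non-negligible probability it actually pixelates $x$. Concretely, $\P\{x\in\pix(\cR_{_X})\}\ge\P\{\cR_{_X}\cap Q(x)\neq\varnothing\}=\P\{T(x)<\infty\}$ where $T(x)$ is the hitting time of $Q(x)$ (or of a slightly enlarged ball). By the same strong-Markov computation used in the proof of Lemma \ref{lem:U1} --- writing $\U_{_X}(Q(x))$ as an expectation over the post-$T(x)$ increments as in \eqref{U} --- one gets $\U_{_X}(Q(x))\le\U_{_X}(B(0;2))\cdot\P\{T(x)<\infty\}$, because after hitting $Q(x)$ the future occupation of $Q(x)$ is contained in the occupation of a ball of radius $2$ around the origin. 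Rearranging,
\[
	\P\left\{x\in\pix(\cR_{_X})\right\}\ge\P\{T(x)<\infty\}\ge\frac{\U_{_X}(Q(x))}{\U_{_X}(B(0;2))},
\]
so $c_1:=1/\U_{_X}(B(0;2))$ works (shrinking it below $1$ as needed). Note $\U_{_X}(B(0;2))<\infty$ by transience, so $c_1>0$.

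The step I expect to require the most care is handling the boundary conventions so that the geometric inclusions $Q(x)\subseteq B(x;\cdot)$ and the reverse coverings hold cleanly; since $Q(x)=[x_1,x_1+1)\times\cdots$ while $B(x;r)=[x_1-r,x_1+r)\times\cdots$ is centered at $x$, the cube $Q(x)$ is not centered at $x$, so one actually has $Q(x)\subseteq B(x;1)$ but $Q(x)\not\subseteq B(x;\tfrac12)$; this is harmless for the stated constants but must be tracked. A secondary subtlety is that the stable-process-style potential arguments produce $\overline{\cR_{_X}}$ rather than $\cR_{_X}$ because hitting a closed ball is the relevant event in \eqref{U}; since the bounds in Lemmas \ref{lem:U1}--\ref{lem:U2} are already phrased with $\overline{\cR_{_X}}$, and since $\cR_{_X}\cap Q(x)\neq\varnothing$ certainly implies $\overline{\cR_{_X}}\cap\overline{Q(x)}\neq\varnothing$, the closure causes no loss in either direction. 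Finally, Lemma \ref{lem:U3} can be used if one prefers to state both bounds against $\U_{_X}(B(0;r))$ for a common $r$, but it is not strictly needed for the form stated here.
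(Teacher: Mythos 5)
Your proof is correct and follows essentially the same route as the paper: identify $\{x\in\pix(\cR_{_X})\}$ with the event that the (closure of the) range hits $Q(x)$, then apply the strong-Markov/occupation-measure bounds of Lemmas \ref{lem:U1} and \ref{lem:U2}. The only cosmetic difference is that the paper recenters $Q(x)$ as $B(y;\tfrac12)$ with $y_i=x_i+\tfrac12$ so the two lemmas apply verbatim (yielding $c_1=1/\U_{_X}(B(0;1))$), whereas you rerun the Lemma \ref{lem:U1} argument directly on the cube and take $r=1$ in Lemma \ref{lem:U2}, which changes nothing of substance.
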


\begin{proof}
	Since the process $X$ is c\`adl\`ag, the difference between
	$\cR_{_X}$ and its closure is a.s.\ denumerable, and
	hence
	\[
		\P\left\{ x\in\pix\left(\cR_{_X}\right)\right\}
		=\P\left\{ x\in\pix\left(\overline{\cR_{_X}}\right)\right\}
		=\P\left\{ \overline{\cR_{_X}}\cap Q(x)\neq\varnothing\right\},
	\]
	for all $x\in\Z^d$. Let $y_i:=x_i+\frac12$ for $1\le i\le d$
	and recall that $Q(x)=B(y;1/2)$ in order to deduce from
	Lemmas \ref{lem:U1} and \ref{lem:U2} that
	\begin{equation}\label{eq:prob:hit}
		\frac{\U_{_X}(Q(x))}{\U_{_X}(B(0;1))}=
		\frac{\U_{_X}(B(y;1/2))}{\U_{_X}(B(0;1))}\le
		\P\left\{ x\in\pix\left(\cR_{_X}\right)\right\}
		\le \frac{\U_{_X}(B(y;1))}{\U_{_X}(B(0;1/2))}.
	\end{equation}
	The denominators are strictly positive because $X$ is c\`adl\`ag
	and $B(0;1/2)$ is open in $\R^d$; and they are finite
	because of the transience of $X$.
	Because $B(y;1)\subseteq B(x;2)$, \eqref{eq:prob:hit} completes the proof.
\end{proof}

The following lemma is the final technical result of this section. It
presents an upper bound for the probability that the range
of $X$ simultaneously intersects two given balls.

\begin{lemma}\label{lem:U5}
	For all $x,y\in\R^d$ and $r>0$,
	\begin{align*}
		&\P\left\{\overline{\cR}_X\cap B(x;r)\neq\varnothing\,,
			\overline{\cR_{_X}}\cap B(y;r)\neq\varnothing \right\}\\
		&\hskip1in\le \frac{\U_{_X}(B(x;2r))}{\U_{_X}(B(0;r))}\cdot
			\frac{\U_{_X}(B(y-x;4r))}{\U_{_X}(B(0;2r))}
			+\frac{\U_{_X}(B(y;2r))}{\U_{_X}(B(0;r))}\cdot
			\frac{\U_{_X}(B(x-y;4r))}{\U_{_X}(B(0;2r))}.
	\end{align*}
\end{lemma}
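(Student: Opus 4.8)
\medskip

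The plan is to run a standard first-hitting-time decomposition on the two events, exactly as in the proofs of Lemmas \ref{lem:U1} and \ref{lem:U2}, but now tracking which of the two balls is hit first. Write $T_x := T(x;r)$ and $T_y := T(y;r)$ for the stopping times defined in \eqref{T}. On the event in question both $T_x$ and $T_y$ are finite, so we may split according to whether $T_x \le T_y$ or $T_y < T_x$; by symmetry in $(x,y)$ it suffices to bound $\P\{T_x \le T_y < \infty\}$ by the first of the two summands and then add the mirror bound. So fix attention on $\P\{T_x \le T_y<\infty\}$.

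\medskip

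The key step is to turn ``$T_y < \infty$'' into a statement about the potential measure of the post-$T_x$ process. On $\{T_x < \infty\}$ the strong Markov property gives a fresh copy $X' := X_{\cdot + T_x} - X_{T_x}$ of $X$, independent of $\mathcal{F}_{T_x}$, and $X_{T_x} \in B(x;r)$ a.s. The event $\{T_y < \infty\}$ says $X$ visits $B(y;r)$ after time $T_x$, i.e.\ $X'$ visits $B(y - X_{T_x}; r)$; since $|X_{T_x} - x| < r$, the triangle inequality gives $B(y - X_{T_x};r) \subseteq B(y-x; 2r)$, so
\[
	\P\{T_x \le T_y < \infty\} \le \P\{T_x < \infty\}\cdot
	\P\left\{ \overline{\cR_{_X}} \cap B(y-x;2r) \neq \varnothing\right\}.
\]
Now I bound each factor using the hitting-probability estimates already proved. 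Lemma \ref{lem:U1} (applied at radius $r$) gives $\P\{T_x < \infty\} = \P\{\overline{\cR_{_X}}\cap B(x;r)\neq\varnothing\} \le \U_{_X}(B(x;2r))/\U_{_X}(B(0;r))$, using that the inequality of Lemma \ref{lem:U1} rearranges to this form (the denominator $\U_{_X}(B(0;r))$ is finite and positive by transience and by $X$ being c\`adl\`ag on the open ball $B(0;r)$). For the second factor, Lemma \ref{lem:U1} at radius $2r$ gives $\P\{\overline{\cR_{_X}}\cap B(y-x;2r)\neq\varnothing\} \le \U_{_X}(B(y-x;4r))/\U_{_X}(B(0;2r))$. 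Multiplying yields precisely the first summand. Adding the symmetric bound for $\P\{T_y < T_x < \infty\}$, which produces the second summand with $x$ and $y$ interchanged, completes the proof.

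\medskip

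The only genuinely delicate point is the passage from $\cR_{_X}$ to $\overline{\cR_{_X}}$ when invoking hitting probabilities: the stopping time $T(x;r)$ refers to the open (half-open) box $B(x;r)$, so $X_{T(x;r)}$ need not lie in $B(x;r)$ but only in its closure, and the inclusion $B(y-X_{T_x};r) \subseteq B(y-x;2r)$ must be checked with $|X_{T_x}-x|\le r$ rather than $<r$. This is harmless: $r$ may be replaced by $r+\epsilon$ throughout and the final estimate recovered by monotone continuity of $\U_{_X}$, or one simply notes that $\{T_y < \infty\}$ only requires $X'$ to hit the \emph{closed} ball $B(y-X_{T_x};r)^{-}$, which is still contained in $B(y-x;2r)$. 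A second, more cosmetic, point is that one should record that all the denominators $\U_{_X}(B(0;r))$, $\U_{_X}(B(0;2r))$ appearing above are strictly positive and finite — positivity because $X_0 = 0$ and $X$ is right-continuous, so $X$ spends positive expected time near the origin, and finiteness by the standing transience assumption — so that the stated inequality is meaningful even in the (impossible-here but formally worth noting) degenerate cases.
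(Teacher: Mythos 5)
Your proof is correct and follows essentially the same route as the paper's: split according to which ball is hit first, apply the strong Markov property at $T(x;r)$ together with the triangle inequality to obtain $\P\{T_x\le T_y<\infty\}\le \P\{\overline{\cR_{_X}}\cap B(x;r)\neq\varnothing\}\cdot\P\{\overline{\cR_{_X}}\cap B(y-x;2r)\neq\varnothing\}$, symmetrize, and convert the hitting probabilities into ratios of potential measures. One small correction: the upper bound $\P\{\overline{\cR_{_X}}\cap B(x;r)\neq\varnothing\}\le \U_{_X}(B(x;2r))/\U_{_X}(B(0;r))$ is the content of Lemma \ref{lem:U2}, not Lemma \ref{lem:U1} (the latter rearranges to the reverse, lower bound on the hitting probability, which would be useless here); with that citation fixed, your argument matches the paper's.
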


\begin{proof}
	Let us recall the stopping times $T(x;r)$ from \eqref{T}. First one notices that
	\begin{align*}
		\P\left\{ T(x;r)\le T(y;r)<\infty\right\}
		&=\P\left\{ T(x;r)<\infty, \,\exists s\ge0: X_{s+T(x;r)} - X_{T(x;r)}\in
			B(y- X_{T(x;r)};r)\right\}\\
		&\le \P\{ T(x;r)<\infty\}\cdot\P\{ T(y-x;2r)<\infty\}\\
		&= \P\left\{ \overline{\cR_{_X}}\cap B(x;r)\neq\varnothing\right\}
			\cdot \P\left\{ \overline{\cR_{_X}}\cap B(y-x;2r)\neq\varnothing\right\},
	\end{align*}
	owing to the strong Markov property and the fact that
	$B(y - X_{T(x;r)}\,,r)\subseteq B(y-x;2r)$ a.s.\ on $\{T(x;r)<\infty\}$
	[the triangle inequality]. By replacing also the roles of $x$ and $y$ and appealing
	to the subadditivity of probabilities, one can deduce from the preceding  that
	\begin{align*}
		&\P\left\{\overline{\cR}_X\cap B(x;r)\neq\varnothing\,,
			\overline{\cR_{_X}}\cap B(y;r)\neq\varnothing \right\}\\
		&\hskip1in\le \P\left\{ \overline{\cR_{_X}}\cap B(x;r)\neq\varnothing\right\}
			\cdot \P\left\{ \overline{\cR_{_X}}\cap B(y-x;2r)\neq\varnothing\right\}\\
		&\hskip2.5in + \P\left\{ \overline{\cR_{_X}}\cap B(y;r)\neq\varnothing\right\}
			\cdot \P\left\{ \overline{\cR_{_X}}\cap B(x-y;2r)\neq\varnothing\right\}.
	\end{align*}
	An appeal to Lemma \ref{lem:U2} completes the proof.
\end{proof}

With the requisite material for the proof of Theorem \ref{th:Range:Levy}
under way, we are ready for the following.

\begin{proof}[Proof of Theorem \ref{th:Range:Levy}]
	The strategy of the proof is to verify that $\DimM(\cR_{_X})=\alpha_c$ a.s.,
	where
	\[
		\alpha_c := \inf\left\{\alpha>0:\ \int_{\R^d}
		\frac{\U_{_X}(\d x)}{1+|x|^\alpha}<\infty\right\}.
	\]
	Let us begin by making some real-variable observations.
	First, let us note that because $\U_{_X}$ is a finite measure [by transience],
	\[
		\sum_{n=1}^\infty 2^{-n\alpha}\U_{_X}(\cS_n)
		=\sum_{n=1}^\infty 2^{-n\alpha}\int_{\cS_n}\U_{_X}(\d x)
		\asymp \int_{|x|>1} \frac{\U_{_X}(\d x)}{|x|^\alpha}
		\asymp\int_{\R^d}\frac{\U_{_X}(\d x)}{1+|x|^\alpha}.
	\]
	Therefore,
	\[
		\alpha_c =
		\inf\left\{\alpha>0:\ \sum_{n=1}^\infty 2^{-n\alpha}
		\U_{_X}(\cS_n) <\infty\right\}.
	\]
	By the definition of $\alpha_c$, if $0<\alpha<\alpha_c$, then
	$\sum_n 2^{-n\alpha}\U_{_X}(\cS_n)=\infty$; as a result, 
	\[
		\limsup_{n\to\infty} 2^{-\beta n}\U_{_X}(B(0;2^n))\ge
		 \limsup_{n\to\infty}2^{-\beta n}
		\U_{_X}(\cS_n) =\infty,
	\]
	whenever $0<\beta<\alpha$. On the other hand,
	 if $\beta>\alpha_c$, then $\limsup_{n\to\infty} 2^{-\beta n}\U_{_X}(\cS_n)<\infty$,
	 and hence
	 \[
	 	\U_{_X}(B(0;2^n))=\sum_{k=0}^n\U_{_X}(\cS_k)=O(2^{\beta n})
		\qquad\text{as  $n\to\infty$.}
	\]
	These remarks together show the following alternative representation
	 of $\alpha_c$:
	 \begin{equation}\label{red2}
	 	\alpha_c = \limsup_{n\to\infty} n^{-1}\Log \U_{_X}(B(0;2^n))
		=  \limsup_{n\to\infty} n^{-1}\Log \U_{_X}(\cS_n).
	 \end{equation}
	 Now we begin the bulk of the proof.
	
	Because $X$ has c\`adl\`ag sample functions,
	Lemma \ref{lem:U4} and \eqref{red2} together
	imply that for all $n\ge2$,
	\[
		\E\left|\pix\left(\cR_{_X}\right)\cap B(0;2^n) \right|
		\lesssim\sum_{x\in B(0;2^n)}\U_{_X}(B(x;2))
		\lesssim \U_{_X}(B(0;2^{n+1}))
		\le 2^{n(1+o(1))\DimM(\cR_{_X})},
	\]
	as $n\to\infty$.
	Therefore, the Chebyshev inequality implies that
	\[
		\sum_{n=1}^\infty\P\left\{ \left|\pix
		\left(\cR_{_X}\right)\cap B(0;2^n) \right|>
		2^{n\theta}\right\} <\infty
		\qquad\text{for all $\theta>\alpha_c$}.
	\]
	An application of the Borel--Cantelli lemma yields
	$\DimM(\cR_{_X})\le\alpha_c$ a.s.,
	which implies a part of the assertion of the theorem. 
	
	For the next part,
	let us  begin with the following consequence of Lemma \ref{lem:U4}:
	\begin{equation}\label{E:pix:LB}
		\E \left|\pix\left(\cR_{_X}\right)\cap B(0;2^n)\right|
		\gtrsim \sum_{x\in B(0;2^n)}\U_{_X}(Q(x)) 
		\asymp \U_{_X}(B(0;2^n)).
	\end{equation}
	Next, we estimate the second moment of the same random variable as follows:
	\begin{align*}
		\E\left(\left|\pix\left(\cR_{_X}\right)\cap
			B(0;2^n) \right|^2\right)
			&\le \sum_{x,y\in B(0;2^n)}\P\left\{\overline{\cR}_X\cap B(x;1)\neq\varnothing\,,
			\overline{\cR_{_X}}\cap B(y;1)\neq\varnothing \right\}\\
		&\le \sum_{x,y\in B(0;2^n)}\frac{\U_{_X}(B(x;2))}{\U_{_X}(B(0;1))}
			\cdot\frac{\U_{_X}(B(y-x;4))}{\U_{_X}(B(0;2))}\\
		&\hskip1in +\sum_{x,y\in B(0;2^n)}\frac{\U_{_X}(B(y;2))}{\U_{_X}(B(0;1))}
			\cdot\frac{\U_{_X}(B(x-y;4))}{\U_{_X}(B(0;2))};
	\end{align*}
	see Lemma \ref{lem:U5} for the final inequality. Since $B(0;2^n)\ominus B(0;2^n)
	=B(0\,,2^{n+1})$, it follows that
	\begin{align*}
		\E\left(\left|\pix\left(\cR_{_X}\right)\cap B(0;2^n) \right|^2\right)
			&\le 2\sum_{x\in B(0;2^n)}\frac{\U_{_X}(B(x;2))}{\U_{_X}(B(0;1))}\ \cdot
			\sum_{w\in B(0;2^{n+1})}\frac{\U_{_X}(B(w;4))}{\U_{_X}(B(0;2))}\\
		&\le K\U_{_X}(B(0;2^{n+1}))\cdot \U_{_X}(B(0;2^{n+2})\\
		&\le 4^{3d} K[\U_{_X}(B(0;2^n))]^2,
	\end{align*}
	where $K:= 2[\U_{_X}(B(0;1))\U_{_X}(B(0;2))]^{-1}$ and the last line follows from
	\eqref{eq:Vol:Dbl}.
	Therefore, the Paley--Zygmund inequality and \eqref{E:pix:LB} together imply that
	\begin{align*}
		\P\left\{\left|\pix\left(\cR_{_X}\right)\cap B(0;2^n) \right|
			>\frac{c_1}{2}\U_{_X}(B(0;2^n))\right\}
			&\ge \frac{\left( \E\left|\pix\left(\cR_{_X}\right)
			\cap B(0;2^n)\right|\right)^2}{4\E\left(
			\left|\pix\left(\cR_{_X}\right)\cap B(0;2^n) \right|^2\right)}\\
		&\gtrsim 1,
	\end{align*}
	uniformly in $n$. The preceding and \eqref{red2} together imply that
	$\DimM(\cR_{_X})\ge\alpha_c$ a.s. This verifies the theorem
	since the other bound was verified earlier in the proof.
\end{proof}

\subsection{Fourier analysis}

It is well-known that the law of $X$
is determined by a socalled \emph{characteristic exponent} $\Psi_{_X}:\R^d\to\C$,
which can be defined via
$\E\exp(iz\cdot X_t)=\exp(-t\Psi_{_X}(z))$ for all $t\ge0$ and $z\in\R^d$.
In particular, one can prove from this that $\Psi_{_X}(z)\neq 0$ for almost all
$z\in\R^d$. This fact is used tacitly in the sequel.

We frequently use the well-known fact that
$\Re\Psi_{_X}(z)\ge0$ for all $z\in\R^d$. To see this fact,
let $X'$ be an independent
copy of $X$ and note that $t\mapsto X_t-X_t'$ is a L\'evy process
with characteristic exponent $2\Re\Psi_{_X}$. Since $X_1-X'_1$ is a symmetric
random variable, one can conclude the mentioned fact that $\Re\Psi_{_X}\ge0$.

Port and Stone \cite{PS71} have proved,
among other things, that the transience of $X$ is equivalent to
the convergence of the integral
\[
	I(\Psi_{_X}):=\int_{\|z\|\le 1}\Re\left(\frac{1}{\Psi_{_X}(z)}\right)\d z;
\]
see also \cite{Bertoin96}.
The following shows that the macroscopic dimension of the
range of $X$ is determined by the strength by which the
Port--Stone integral $I(\Psi_{_X})$ converges.

\begin{theorem}\label{th:Range:Levy:FT}
	With probability one,
	\[
		\DimM(\cR_{_X}) = \inf\left\{\alpha>0:\
		\int_{\|z\|\le1}\Re\left( \frac{1}{\Psi_{_X}(z)}\right)\frac{\d z}{%
		\|z\|^{d-\alpha}}<\infty\right\}.
	\]
\end{theorem}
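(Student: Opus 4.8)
The plan is to deduce Theorem \ref{th:Range:Levy:FT} from the already-established Theorem \ref{th:Range:Levy} by showing that the two ``critical exponents'' agree. Write
\[
	\alpha_c := \inf\left\{\alpha>0:\ \int_{\R^d}\frac{\U_{_X}(\d x)}{1+|x|^\alpha}<\infty\right\},
	\qquad
	\alpha_c' := \inf\left\{\alpha>0:\ \int_{\|z\|\le1}\Re\left(\frac{1}{\Psi_{_X}(z)}\right)\frac{\d z}{\|z\|^{d-\alpha}}<\infty\right\}.
\]
Theorem \ref{th:Range:Levy} already gives $\DimM(\cR_{_X})=\alpha_c$ a.s., so it suffices to prove $\alpha_c=\alpha_c'$, a purely analytic statement about the potential measure and the characteristic exponent. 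In view of the reduction \eqref{red2} from the proof of Theorem \ref{th:Range:Levy}, it is equivalent to show
\[
	\limsup_{n\to\infty} n^{-1}\Log \U_{_X}(B(0;2^n)) = \alpha_c'.
\]

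The key tool is a two-sided comparison between $\U_{_X}(B(0;R))$ for large $R$ and the truncated Port--Stone integral $J(R):=\int_{\|z\|\le 1/R}\Re(1/\Psi_{_X}(z))\,\d z$. First I would record the Fourier-analytic identity behind it: for a suitable nonnegative test function $\varphi$ with $\widehat\varphi\ge0$ and $\widehat\varphi\gtrsim 1$ on the unit ball (a Fejér-type kernel), the scaled function $\varphi_R(x):=\varphi(x/R)$ satisfies, via \eqref{Pot:Meas} and Fubini/Plancherel,
\[
	\int_{\R^d}\varphi_R(x)\,\U_{_X}(\d x) = \int_0^\infty \E\varphi_R(X_t)\,\d t
	= c_d\, R^d\int_{\R^d}\widehat\varphi(R z)\,\Re\!\left(\frac{1}{\Psi_{_X}(z)}\right)\d z,
\]
where the last equality uses $\int_0^\infty \e^{-t\Psi_{_X}(z)}\,\d t=1/\Psi_{_X}(z)$ together with $\Re\Psi_{_X}\ge0$. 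Choosing $\varphi$ so that $\1_{B(0;R)}\lesssim\varphi_R$ and $\widehat\varphi$ is supported near the origin yields
\[
	\U_{_X}(B(0;R)) \lesssim R^d\int_{\|z\|\lesssim 1/R}\Re\!\left(\frac{1}{\Psi_{_X}(z)}\right)\d z,
\]
and a complementary lower bound $\U_{_X}(B(0;cR))\gtrsim R^d\int_{\|z\|\le 1/R}\Re(1/\Psi_{_X}(z))\,\d z$ follows from choosing $\varphi$ compactly supported inside $B(0;1)$ with $\widehat\varphi\gtrsim1$ on $\{\|z\|\le1/R\}$ after rescaling. Combining these with the weak-unimodality Lemma \ref{lem:U3} (to absorb the constant $c$ in $B(0;cR)$) gives
\[
	\U_{_X}(B(0;R)) \asymp R^d\int_{\|z\|\le 1/R}\Re\!\left(\frac{1}{\Psi_{_X}(z)}\right)\d z \qquad\text{for all }R\ge1.
\]

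With this equivalence in hand, the remaining step is the elementary real-variable identity
\[
	\limsup_{n\to\infty}n^{-1}\Log\!\left(2^{nd}\int_{\|z\|\le 2^{-n}}\Re\Big(\tfrac{1}{\Psi_{_X}(z)}\Big)\d z\right)
	= \inf\left\{\alpha>0:\ \int_{\|z\|\le 1}\Re\Big(\tfrac1{\Psi_{_X}(z)}\Big)\frac{\d z}{\|z\|^{d-\alpha}}<\infty\right\},
\]
which is proved exactly as the corresponding passage for $\alpha_c$ in the proof of Theorem \ref{th:Range:Levy}: decompose $\{\|z\|\le1\}$ into dyadic annuli $\{2^{-n-1}<\|z\|\le 2^{-n}\}$, set $a_n:=2^{nd}\int_{2^{-n-1}<\|z\|\le2^{-n}}\Re(1/\Psi_{_X}(z))\,\d z$, and observe that $\int_{\|z\|\le1}\Re(1/\Psi_{_X}(z))\|z\|^{\alpha-d}\,\d z\asymp\sum_n 2^{-n\alpha}a_n$ while $2^{nd}\int_{\|z\|\le2^{-n}}\Re(1/\Psi_{_X})\asymp\sum_{k\ge n}2^{-(k-n)d}a_k$; a summation-by-parts argument then identifies the exponential growth rate of the latter tail sums with the critical $\alpha$ for convergence of $\sum 2^{-n\alpha}a_n$. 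Chaining the three displays yields $\limsup_n n^{-1}\Log\U_{_X}(B(0;2^n))=\alpha_c'$, hence $\alpha_c=\alpha_c'$ by \eqref{red2}, and the theorem follows from Theorem \ref{th:Range:Levy}.

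The main obstacle I anticipate is justifying the Fourier inversion step rigorously: $1/\Psi_{_X}(z)$ is only locally integrable near $0$ (by transience, i.e.\ convergence of $I(\Psi_{_X})$) and need not be integrable at infinity, $\Psi_{_X}$ may vanish on a null set, and $\U_{_X}$ is an infinite-mass-looking object tamed only by transience. The clean way around this is to work throughout with the smooth, compactly Fourier-supported test functions $\varphi_R$ — for which $\E\varphi_R(X_t)=(2\pi)^{-d}\int\widehat\varphi_R(z)\e^{-t\Psi_{_X}(z)}\,\d z$ is absolutely convergent and $t$-integrable since $\widehat\varphi_R$ is supported in $\{\|z\|\lesssim1/R\}$ where $\Re(1/\Psi_{_X})\in L^1$ — and only afterwards compare $\varphi_R$ with indicator functions of balls using $\U_{_X}$'s finiteness and Lemma \ref{lem:U3}. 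One must also check that the relevant constants ($c_d$, the comparison constants from the choice of $\varphi$, the $4^d$ in Lemma \ref{lem:U3}) are finite and independent of $R$, which is routine since they are all $R$-independent and disappear after taking $n^{-1}\Log(\cdot)$ and $\limsup$.
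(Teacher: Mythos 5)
Your proposal is correct in outline, but it takes a genuinely different route from the paper's. The paper makes the same reduction---it suffices to show that the two critical exponents $\alpha_c$ and $\alpha_c'$ coincide---but then proceeds by a single global application of Plancherel: Lemma \ref{lem:FT} computes the Fourier transform of $x\mapsto(1+\|x\|^2)^{-\alpha/2}$ explicitly as a Macdonald kernel $K_{(d-\alpha)/2}(\|z\|)/\|z\|^{(d-\alpha)/2}$, this is paired against $\Re(1/\Psi_{_X})$, and the resulting frequency integral is split at $\{|\Psi_{_X}(z)|=1\}$; the small- and large-argument asymptotics of $K_\nu$ show that the high-frequency piece is always finite and the low-frequency piece is comparable to $\int_{\|z\|\le1}\Re(1/\Psi_{_X}(z))\|z\|^{\alpha-d}\,\d z$. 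You instead prove the scale-by-scale two-sided bound $\U_{_X}(B(0;R))\asymp R^d\int_{\|z\|\le 1/R}\Re(1/\Psi_{_X}(z))\,\d z$ using band-limited Fej\'er-type test functions (upper bound) and compactly supported $\psi*\psi$-type functions (lower bound, with Lemma \ref{lem:U3} absorbing the dilation constants), and then run a dyadic summation. What your route buys is that you never have to control $1/\Psi_{_X}$ at high frequencies---no Bessel asymptotics and no analogue of the paper's $T_2$ term---at the cost of the extra summation-by-parts bookkeeping, which is, however, identical in spirit to the paper's own derivation of \eqref{red2}. Both arguments lean equally on the Port--Stone identification of $\widehat{\U}_{_X}$ with $1/\Psi_{_X}$.

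One step deserves more care than you give it. The interchange $\int_0^\infty\E\varphi_R(X_t)\,\d t = c_d\int\widehat{\varphi_R}(z)\,\Re(1/\Psi_{_X}(z))\,\d z$ is \emph{not} an absolutely convergent Fubini, even for band-limited $\varphi_R$: one has $\int_0^\infty|\e^{-t\Psi_{_X}(z)}|\,\d t = 1/\Re\Psi_{_X}(z)$, which can be much larger than $\Re(1/\Psi_{_X}(z))=\Re\Psi_{_X}(z)/|\Psi_{_X}(z)|^2$ and need not be locally integrable, so ``absolutely convergent and $t$-integrable'' is not the right justification. The standard remedy is to insert a factor $\e^{-\lambda t}$, which legitimately gives $\int\widehat{\varphi_R}(z)\,\Re\bigl(1/(\lambda+\Psi_{_X}(z))\bigr)\,\d z$ (using that $\widehat{\varphi_R}$ is real and even and $\Psi_{_X}(-z)=\overline{\Psi_{_X}(z)}$), and then to let $\lambda\downarrow0$; the passage to the limit on the frequency side is precisely the Port--Stone continuity theorem that the paper also cites without proof. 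With that substitution your argument closes, and you are then on the same footing of rigor as the published proof.
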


The proof of Theorem \ref{th:Range:Levy:FT} hinges
on a calculation from classical Fourier analysis.
From now on, $\widehat{h}$ denotes the Fourier transform
of a locally integrable function $h:\R^d\to\R$, normalized so that
\[
	\widehat{h}(z) = \int_{\R^d}\e^{iz\cdot x}h(x)\,\d x
	\qquad\text{for all $z\in\R^d$ and $h\in L^1(\R^d$)}.
\]
As is done customarily, we
let $K_\nu$ denote the modified Bessel function  [Macdonald function]
of the second kind.

\begin{lemma}\label{lem:FT}
	Choose and fix $\alpha>0$ and define
	$f(x) := (1+\|x\|^2)^{-\alpha/2}$ for all $x\in\R^d$. Then,
	the Fourier transform of $f$ is
	\[
		\widehat{f}(z) = c_{d,\alpha}\cdot
		\frac{K_{(d-\alpha)/2}(\|z\|)}{\|z\|^{(d-\alpha)/2}}
		\qquad[z\in\R^d],
	\]
	where $0<c_{d,\alpha}<\infty$ depends only on $(d\,,\alpha)$.
\end{lemma}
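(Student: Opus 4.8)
The plan is to compute $\widehat f$ for $f(x)=(1+\|x\|^2)^{-\alpha/2}$ by subordination: write the profile $(1+u)^{-\alpha/2}$ as a Laplace-type integral of Gaussians in the variable $u=\|x\|^2$, transform the Gaussians exactly, and then recognize the resulting one-dimensional integral as an integral representation of the Macdonald function $K_\nu$. Concretely, I would start from the Gamma identity
\[
	(1+\|x\|^2)^{-\alpha/2}=\frac{1}{\Gamma(\alpha/2)}\int_0^\infty
	t^{\alpha/2-1}\,\e^{-t(1+\|x\|^2)}\,\d t ,
\]
valid for $\alpha>0$. Since $\int_{\R^d}\e^{iz\cdot x}\e^{-t\|x\|^2}\,\d x=(\pi/t)^{d/2}\e^{-\|z\|^2/(4t)}$, Fubini (the double integral is absolutely convergent because $f\in L^1(\R^d)$ for... — in fact $f\notin L^1$ when $\alpha\le d$, so this step needs care; see the obstacle paragraph below) gives
\[
	\widehat f(z)=\frac{\pi^{d/2}}{\Gamma(\alpha/2)}\int_0^\infty
	t^{(\alpha-d)/2-1}\exp\!\left(-t-\frac{\|z\|^2}{4t}\right)\d t .
\]
Next I would substitute $t=\tfrac12\|z\|s$ to bring this to the classical Bessel integral
\[
	\int_0^\infty s^{\nu-1}\exp\!\left(-\tfrac{a}{2}(s+s^{-1})\right)\d s
	=2K_\nu(a),
\]
with $\nu=(\alpha-d)/2$ and $a=\|z\|$; using $K_{-\nu}=K_\nu$ this yields exactly
\[
	\widehat f(z)=c_{d,\alpha}\,\frac{K_{(d-\alpha)/2}(\|z\|)}{\|z\|^{(d-\alpha)/2}},
	\qquad c_{d,\alpha}=\frac{2\pi^{d/2}}{\Gamma(\alpha/2)}\cdot 2^{(d-\alpha)/2},
\]
which is finite and positive and depends only on $(d,\alpha)$, as claimed.

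The main obstacle is that for $0<\alpha\le d$ the function $f$ is \emph{not} integrable on $\R^d$, so "$\widehat f$" must be interpreted as a tempered distribution and the naive Fubini swap above is not literally justified. I would handle this in the standard way: first prove the identity for $\alpha>d$, where $f\in L^1(\R^d)$ and every step above is rigorous; then extend to all $\alpha>0$ by analytic continuation. Both sides are analytic in $\alpha$ in a suitable sense — the left side because $\alpha\mapsto\langle f_\alpha,\varphi\rangle$ is analytic for each Schwartz test function $\varphi$ (dominated convergence / Morera), and the right side because $\nu\mapsto K_\nu(a)a^{-\nu}$ is entire in $\nu$ for each fixed $a>0$ and the pairing against $\varphi$ inherits analyticity from uniform bounds on $K_{(d-\alpha)/2}(\|z\|)\|z\|^{-(d-\alpha)/2}$ on compact $\alpha$-sets (using the known asymptotics $K_\nu(r)\sim\Gamma(|\nu|)2^{|\nu|-1}r^{-|\nu|}$ as $r\downarrow0$ for $\nu\ne0$, $K_0(r)\sim-\log r$, and exponential decay as $r\to\infty$) — so agreement on $\{\alpha>d\}$ forces agreement on all of $\{\alpha>0\}$. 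Alternatively, and perhaps more cleanly for the paper, one can simply regularize: replace $f$ by $f_\epsilon(x)=\e^{-\epsilon\|x\|^2}f(x)\in L^1$, run the absolutely-convergent computation, and pass $\epsilon\downarrow0$, checking that $\widehat{f_\epsilon}\to\widehat f$ in $\mathcal S'$ while the Bessel integral converges by dominated convergence; this avoids invoking analytic continuation explicitly. I expect the write-up to consist mostly of citing the Gamma and Bessel integral identities and then dispatching the integrability caveat by one of these two routes, with essentially no hard analysis beyond that.
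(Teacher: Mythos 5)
Your computation is correct, and it is built on exactly the same ``abelian trick'' the paper uses: represent $(1+\|x\|^2)^{-\alpha/2}$ via the Gamma integral $\Gamma(\theta)^{-1}\int_0^\infty t^{\theta-1}\e^{-t(1+\|x\|^2)}\,\d t$, transform the Gaussians, and recognize $\int_0^\infty s^{\nu-1}\exp(-\tfrac a2(s+s^{-1}))\,\d s=2K_\nu(a)$; your explicit constant $c_{d,\alpha}=2^{1+(d-\alpha)/2}\pi^{d/2}/\Gamma(\alpha/2)$ checks out against the paper's $c=(4\pi)^{d/2}\Gamma(\theta)2^{-1-(d/2)+\theta}$ with $\theta=\alpha/2$. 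The one genuine difference is how the non-integrability of $f$ for $\alpha\le d$ is handled. You do the computation directly on $f$, which forces you to first assume $\alpha>d$ and then extend by analytic continuation or Gaussian regularization --- both routes work, but each costs an extra page of justification (analyticity of the distributional pairings, or convergence of $\widehat{f_\epsilon}$ in $\mathcal{S}'$). The paper instead phrases the entire computation as a pairing $\int_{\R^d}\varphi(x)f(x)\,\d x$ against a fixed Schwartz function $\varphi$ from the very first line, inserting the Gaussian Fourier transform via Parseval. With $\varphi$ present, every Tonelli swap is absolutely convergent for every $\alpha>0$ (the rapid decay of $\varphi$ and $\widehat\varphi$ does the work that $f\in L^1$ would otherwise have to do), so the obstacle you spend your final paragraph on simply never arises. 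If you rewrite your argument in that weak formulation you get the lemma for all $\alpha>0$ in one pass, with no continuation step.
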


\begin{proof}
	This is undoubtedly well known; the proof hinges on a simple
	abelian trick that can be included with little added
	effort.
	
	For all $x\in\R^d$ and $\theta>0$,
	\[
		\int_0^\infty \e^{-t(1+\|x\|^2)} t^{\theta-1}\,\d t = \frac{
		\Gamma(\theta)}{(1+\|x\|^2)^\theta}.
	\]
	Therefore, for every rapidly decreasing test function $\varphi:\R^d\to\R$,
	\begin{align*}
		\int_{\R^d}\frac{\varphi(x)}{(1+\|x\|^2)^\theta}\,\d x
			&= \frac{1}{\Gamma(\theta)}
			\int_{\R^d}\varphi(x)\,\d x\int_0^\infty\d t\
			\e^{-t(1+\|x\|^2)}t^{\theta-1}\\
		&=\frac{1}{\Gamma(\theta)}\int_0^\infty \e^{-t}t^{\theta-1}\,\d t
			\int_{\R^d}\varphi(x) \e^{-t\|x\|^2}\,\d x.
	\end{align*}
	Since
	\[
		\int_{\R^d} \varphi(x)\e^{-t\|x\|^2}\,\d x = (4\pi t)^{-d/2}
		\int_{\R^d}\overline{\widehat{\varphi}(z)}
		\exp\left( - \frac{\|z\|^2}{4t}\right)\d z,
	\]
	it follows that
	\begin{align*}
		\int_{\R^d}\frac{\varphi(x)}{(1+\|x\|^2)^\theta}\,\d x
			&=\frac{1}{b}\int_{\R^d}\overline{\widehat{\varphi}(z)}\,\d z
			\int_0^\infty \frac{\d t}{t^{(d/2)-\theta+1}}
			\exp\left( -t- \frac{\|z\|^2}{4t}\right)\d z\\
		&= \frac{1}{c}\int_{\R^d}\overline{\widehat{\varphi}(z)}\,
			\frac{K_{(d/2)-\theta}(\|z\|)}{\|z\|^{(d/2)-\theta}}\,\d z,
	\end{align*}
	where $b:=(4\pi)^{d/2}\Gamma(\theta)$
	and $c:=(4\pi)^{d/2}\Gamma(\theta)2^{-1-(d/2)+\theta}$.
	This proves the result, after we set $\theta:=\alpha/2$.
\end{proof}

\begin{proof}[Proof of Theorem \ref{th:Range:Levy:FT}]
	It is not hard to check
	(see, for example, Port and Stone \cite{PS71})
	that $\widehat{\U}_{_X}(z) = 1/\Psi_{_X}(z)$ for almost all $z\in\R^d$.
	Because $\Re(1/\Psi_{_X}(z))=\Re\Psi_{_X}(z)/|\Psi_{_X}(z)|^2>0$ a.e.,
	Lemma \ref{lem:FT} and a suitable form of
	the Plancherel's theorem together imply that
	\[
		\int_{\R^d}\frac{\U_{_X}(\d x)}{1+|x|^\alpha}
		\asymp\int_{\R^d}\frac{\U_{_X}(\d x)}{(1+|x|^2)^{\alpha/2}}
		\propto\int_{\R^d}\Re\left( \frac{1}{\Psi_{_X}(z)}\right)
		\frac{K_{(d-\alpha)/2}(\|z\|)}{\|z\|^{(d-\alpha)/2}}\,\d z
		:= T_1 + T_2,
	\]
	where $T_1$ denotes the preceding integral with domain of integration
	restricted to $\{z\in\R^d:\,|\Psi(z)| <1\}$ and $T_2$ is the same
	integral over $\{z\in\R^d:\,|\Psi(z)| \ge1\}$.
	
	A standard application of  Laplace's method shows that for all $R>0$ 
	there exists
	a finite $A>1$ such that 
	\[
		\frac{\e^{-w}}{A\sqrt w}\le
		K_\nu(w)\le \frac{A\e^{-w}}{\sqrt w},
	\]
	whenever $w>R$.
	And one can check directly that  for all $R>0$ we can find
	a finite $B>1$  such that
	\[
		B^{-1}w^{-\nu}\le K_\nu(w)\le B w^{-\nu}
		\qquad\text{whenever $0<w<R$.}
	\]
	Since $\Psi_{_X}:\R^d\to\C$ is a continuous function that vanishes at the origin,
	$\{z\in\R^d:\, |\Psi_{_X}(z)|>1\}$ does not intersect a certain ball about
	the origin of $\R^d$. Therefore, the inequality
	$\Re(1/\Psi_{_X}(z))\le |\Psi_{_X}(z)|^{-1}$, valid for all $z\in\R^d$, implies that
	\begin{align*}
		T_1 &\asymp\int_{|\Psi_{_X}(z)|\le1}\Re\left( \frac{1}{\Psi_{_X}(z)}\right)
			\frac{\d z}{\|z\|^{d-\alpha}},\\
			\intertext{and}
		T_2 &\asymp\int_{|\Psi_{_X}(z)|\ge1}\Re\left( \frac{1}{\Psi_{_X}(z)}\right)
			\frac{\e^{-\|z\|}}{\|z\|^{(d-\alpha+1)/2}}\,\d z
			\le\int_{|\Psi_{_X}(z)|\ge1} \frac{\e^{-\|z\|}}{%
			\|z\|^{(d-\alpha+1)/2}}\,\d z<\infty.
	\end{align*}
	This verifies that
	\[
		\int_{\R^d}\frac{\U_{_X}(\d x)}{1+|x|^\alpha}<\infty\qquad
		\Longleftrightarrow\qquad T_1<\infty,
	\]
	which completes the theorem in light of Theorem \ref{th:Range:Levy}
	and a real-variable argument that implies that $T_1<\infty$ iff
	$\int_{\|z\|\le1}\Re(1/\Psi_{_X}(z))\|z\|^{-d+\alpha}\,\d z<\infty$.
\end{proof}

\subsection{The graph of a L\'evy process}
Let $X:=\{X_t\}_{t\ge0}$ denote an arbitrary L\'evy process on $\R^d$,
not necessarily transient. It is easy to check that
\[
	Y_t := (t\,,X_t)\qquad[t\ge0]
\]
is a transient L\'evy process in $\R^{d+1}$. Moreover,
\[
	\cG_{_X} := \cR_{_Y}
\]
is the graph of the original L\'evy process $X$. The literature on
L\'evy processes contains several results  about the microscopic
structure of $\cG_{_X}$. Perhaps the most noteworthy
result of this type is the fact that
\begin{equation}\label{dim:graph:BM}
	\dim_{_{\rm H}}(\cG_{_X})=\nicefrac32
	\qquad\text{a.s.,}
\end{equation}
when $X$ denotes a
one-dimensional Brownian motion. In this section we compute
the large-scale Minkowski dimension of the same random set; in fact,
we plan to compute the maroscopic dimension of
the graph of a large class of L\'evy processes $X$.

The potential measure of the space-time process $Y$ is, in general,
\[
	\U_{_Y} (A\times B) := \E\left[\int_0^\infty\1_{A\times B}(s\,,X_s)\,\d s\right]
	=\int_A P_s(B)\,\d s,
\]
for all Borel sets $A\subseteq\R_+$ and $B\subseteq\R^d$, where
\[
	P_s(B) := \P\{ X_s\in B\}.
\]
Therefore, Theorem \ref{th:Range:Levy} implies that
\[
	\DimM\cG_{_X} = \inf\left\{ \alpha>0:\ \int_0^\infty \d s\int_{\R^d}
	\frac{P_s(\d x)}{1+s^\alpha + |x|^\alpha}<\infty\right\}
	\qquad\text{a.s.}
\]
In order to understand what this formula says, let us first prove the following
result.

\begin{lemma}\label{lem:0<Dim<1}
	If $X$ is an arbitrary L\'evy process on $\R^d$,
	then
	\[
		0\le \DimM(\cG_{_X})\le 1\qquad\text{a.s.}
	\]
\end{lemma}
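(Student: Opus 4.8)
The plan is to read off both bounds directly from the displayed identity
\[
	\DimM\cG_{_X} = \inf\left\{ \alpha>0:\ \int_0^\infty \d s\int_{\R^d}
	\frac{P_s(\d x)}{1+s^\alpha + |x|^\alpha}<\infty\right\}
	\qquad\text{a.s.},
\]
which is already in hand as the instance of Theorem \ref{th:Range:Levy} applied to the transient space-time process $Y_t=(t\,,X_t)$. The lower bound is then immediate: $\DimM$ is a $\limsup$ of nonnegative quantities (each term $n^{-1}\Log_+(\cdots)$ is nonnegative, indeed $\ge n^{-1}$), so $\DimM(\cG_{_X})\ge 0$ for every realization. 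It therefore only remains to produce the upper bound $\DimM(\cG_{_X})\le 1$ a.s.

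For the upper bound I would fix an arbitrary $\alpha>1$ and simply discard the $|x|^\alpha$ term from the denominator. Since $s^\alpha\ge0$ and $|x|^\alpha\ge0$,
\[
	\int_0^\infty \d s\int_{\R^d}\frac{P_s(\d x)}{1+s^\alpha+|x|^\alpha}
	\le \int_0^\infty \frac{\d s}{1+s^\alpha}\int_{\R^d}P_s(\d x)
	=\int_0^\infty \frac{\d s}{1+s^\alpha}<\infty,
\]
where the middle equality uses that $P_s$ is the law of $X_s$, hence a probability measure of total mass $1$, and the finiteness uses $\alpha>1$ (the last integral is at most $1+\tfrac{1}{\alpha-1}$). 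Thus every $\alpha>1$ belongs to the set over which the infimum in the displayed formula is taken, so on the probability-one event on which that formula holds we have $\DimM(\cG_{_X})\le\alpha$ for every $\alpha>1$; letting $\alpha\downarrow1$ yields $\DimM(\cG_{_X})\le1$ a.s. Combined with the trivial lower bound this proves the lemma.

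I do not expect any genuine obstacle here; the only point needing a little care is the order of quantifiers in the almost-sure statement, and this is harmless because the cited identity is itself a single almost-sure equality, so one argues on one fixed probability-one event. If one wished to avoid invoking Theorem \ref{th:Range:Levy} in this form, an equivalent and self-contained route is to bound $|\pix(\cG_{_X})\cap B(0;2^n)|$ by $\sum_{|k|<2^n}$ of the number of unit cubes of $\Z^d$ met by the c\`adl\`ag path $\{X_t:\,k\le t<k+1\}$; up to a bounded factor coming from translation by $X_k$ these counts are i.i.d.\ with finite mean (a L\'evy process on a unit time interval has, on average, only finitely many jumps of size $\ge1$ and an $L^p$-bounded remainder), so the strong law of large numbers gives a sum of order $2^n$ and hence macroscopic dimension at most $1$. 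The formula-based argument is shorter, so I would present that one.
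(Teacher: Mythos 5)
Your proof is correct and follows essentially the same route as the paper: both start from the a.s.\ identity for $\DimM(\cG_{_X})$ obtained by applying Theorem \ref{th:Range:Levy} to the space-time process $Y_t=(t,X_t)$, drop the $|x|^\alpha$ term, use that $P_s$ is a probability measure, and observe that $\int \d s/(1+s^\alpha)<\infty$ for $\alpha>1$. Your version is if anything slightly tidier than the paper's (which first splits the $s$-integral at $s=1$), and your handling of the almost-sure quantifier is fine.
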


\begin{proof}
	Since
	\[
		\int_0^1 \d s\int_{\R^d}
		\frac{P_s(\d x)}{1+s^\alpha + |x|^\alpha}
		\le \int_0^1 \d s\int_{\R^d} P_s(\d x)=1,
	\]
	it follows that
	\[
		\DimM\cG_{_X} = \inf\left\{ \alpha>0:\ \int_1^\infty \d s\int_{\R^d}
		\frac{P_s(\d x)}{s^\alpha + |x|^\alpha}<\infty\right\}
		\qquad\text{a.s.}
	\]
	The proposition follows because
	\[
		\int_1^\infty \d s\int_{\R^d}
		\frac{P_s(\d x)}{s^\alpha + |x|^\alpha}
		\le \int_1^\infty \frac{\d s}{s^\alpha}<\infty,
	\]
	whenever $\alpha>1$.
\end{proof}

It is possible to also show that, in a large number of cases,
the graph of a L\'evy process has macroscopic Minkowski dimension one, viz.,

\begin{proposition}\label{pr:Dim=1}
	Let $X$ be a L\'evy process on $\R^d$ such that
	$X_1\in L^1(\P)$ and $\E(X_1)=0$. Then,
	$\DimM(\cG_{_X})=1$ a.s.
\end{proposition}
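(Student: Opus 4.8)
The plan is to combine Lemma \ref{lem:0<Dim<1}, which already gives $\DimM(\cG_{_X})\le 1$ a.s., with a matching lower bound $\DimM(\cG_{_X})\ge 1$ a.s. By the formula preceding Lemma \ref{lem:0<Dim<1}, together with the reduction carried out in that lemma, it suffices to show that
\[
	\int_1^\infty \d s\int_{\R^d}\frac{P_s(\d x)}{s^\alpha+|x|^\alpha}=\infty
	\qquad\text{for every }\alpha<1.
\]
First I would fix $\alpha\in(0,1)$ and split the inner integral according to whether $|x|\le s$ or $|x|>s$. On the region $\{|x|\le s\}$ the denominator is at most $2s^\alpha$, so that part of the double integral is bounded below by $\tfrac12\int_1^\infty s^{-\alpha}\,\P\{|X_s|\le s\}\,\d s$. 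Thus the whole thing reduces to showing that $\P\{|X_s|\le s\}$ stays bounded away from $0$ (indeed tends to $1$) as $s\to\infty$, since then $\int_1^\infty s^{-\alpha}\,\d s=\infty$ finishes the argument.

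The key step is therefore the estimate $\P\{|X_s|> s\}\to 0$ as $s\to\infty$, and this is exactly where the hypotheses $X_1\in L^1(\P)$ and $\E(X_1)=0$ enter. The clean way is to invoke the strong law of large numbers for L\'evy processes: under $\E\|X_1\|<\infty$ one has $X_s/s\to\E(X_1)$ a.s.\ as $s\to\infty$ (this follows from the random-walk SLLN applied along integer times, plus a standard maximal-inequality control of the fluctuations $\sup_{n\le s\le n+1}\|X_s-X_n\|$, whose expectation is $O(1)$). When $\E(X_1)=0$ this gives $X_s/s\to 0$ a.s., hence in particular $X_s/s\to 0$ in probability, i.e.\ $\P\{|X_s|>s\}=\P\{|X_s/s|>1\}\to 0$. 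Consequently there is $s_0$ with $\P\{|X_s|\le s\}\ge\tfrac12$ for all $s\ge s_0$, and
\[
	\int_1^\infty \d s\int_{\R^d}\frac{P_s(\d x)}{s^\alpha+|x|^\alpha}
	\ge \frac12\int_{s_0}^\infty \frac{\P\{|X_s|\le s\}}{s^\alpha}\,\d s
	\ge \frac14\int_{s_0}^\infty \frac{\d s}{s^\alpha}=\infty,
\]
since $\alpha<1$. Because this holds for every $\alpha<1$, the infimum defining $\DimM(\cG_{_X})$ is at least $1$, which together with Lemma \ref{lem:0<Dim<1} yields $\DimM(\cG_{_X})=1$ a.s.

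The main obstacle is purely expository rather than mathematical: one must make sure the SLLN for L\'evy processes is quoted in a form valid under only a first-moment hypothesis, and that the passage from the integer-time SLLN to the continuous-time statement is justified (the increment $\sup_{0\le u\le 1}\|X_{n+u}-X_n\|$ has a finite expectation uniform in $n$ by stationarity of increments together with a maximal inequality for the c\`adl\`ag martingale $X_t-t\E(X_1)$, or, more elementarily, one may bypass the a.s.\ statement and prove directly that $\E\|X_s\|=o(s)$—e.g.\ via $\E\|X_s\|\le\E\|X_s-s\E(X_1)\|$ and a weak-$L^1$/truncation argument—which already gives $\P\{|X_s|>s\}\to 0$ by Markov's inequality and suffices for the integral bound above). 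Either route is routine; once $\P\{|X_s|\le s\}\to 1$ is in hand, the remainder of the proof is the elementary integral estimate displayed above.
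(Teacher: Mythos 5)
Your proposal is correct and follows essentially the same route as the paper: the upper bound from Lemma \ref{lem:0<Dim<1}, the restriction of the inner integral to $\{|x|\le s\}$ to reduce matters to $\int_1^\infty s^{-\alpha}\,\P\{|X_s|\le s\}\,\d s=\infty$, and the law of large numbers for L\'evy processes (which the paper simply cites from Bertoin) to get $\P\{|X_s|\le s\}\to1$. The extra discussion of how to justify the continuous-time SLLN is fine but unnecessary, since the cited result already covers it.
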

Therefore, we can see from Proposition \ref{pr:Dim=1}
that the graph of one-dimensional
Brownian motion has macroscopic dimension 1,
yet it has microscopic Hausdorff dimension $\nicefrac32$; 
compare with \eqref{dim:graph:BM}.

\begin{proof}
	Lemma \ref{lem:0<Dim<1} implies that
	\begin{equation}\label{DimDim}
		\DimM(\cG_{_X}) = \inf\left\{ 0<\alpha<1:\ \int_1^\infty \d s\int_{\R^d}
		\frac{P_s(\d x)}{s^\alpha + |x|^\alpha}<\infty\right\}
		\qquad\text{a.s.,}
	\end{equation}
	where $\inf\varnothing:=1$. If $0<\alpha<1$, then
	\[
		\int_1^\infty \d s\int_{\R^d}
		\frac{P_s(\d x)}{s^\alpha + |x|^\alpha}
		\ge \int_1^\infty \d s\int_{|x|\le s}
		\frac{P_s(\d x)}{s^\alpha + |x|^\alpha} \ge 2^{-\alpha}
		\int_1^\infty\P\{|X_s|\le s\}\,\frac{\d s}{s^\alpha}.
	\]
	Because $\E(X_1)=0$, the law of large numbers for L\'evy processes (see, for example, Bertoin
	\cite[pp.\ 40--41]{Bertoin96} implies that
	$\P\{|X_s|\le s\}\to 1$ as
	$s\to\infty$. This shows that
	\[
		\int_1^\infty\P\{|X_s|\le s\}\,\frac{\d s}{s^\alpha}=\infty
		\quad\text{for every $\alpha\in(0\,,1)$,}
	\]
	and proves the lemma.
\end{proof}

Finally, let us prove that the preceding result is unimprovable in the following
sense: For every number $q\in[0\,,1]$, there exist a L\'evy process $X$ on $\R^d$
the macroscopic dimension of whose graph is $q$.

\begin{theorem}\label{th:Graph}
	If $X$ be a symmetric $\beta$-stable L\'evy process on $\R^d$
	for some $0<\beta\le 2$, then
	\[
		\DimM(\cG_{_X}) = \min\bigg\{ \frac{(2\beta-1)_+}{\beta}\,,1\bigg\}
		\qquad\text{a.s.}
	\]
\end{theorem}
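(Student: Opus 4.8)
The plan is to apply Theorem~\ref{th:Range:Levy} to the transient space--time process $Y_t:=(t\,,X_t)$ on $\R^{d+1}$, whose potential measure is $\U_{_Y}(\d(s\,,x))=P_s(\d x)\,\d s$. Together with Lemma~\ref{lem:0<Dim<1} this reduces the theorem to the identity
\[
	\DimM(\cG_{_X})=\inf\left\{0<\alpha<1:\ \int_1^\infty\d s\int_{\R^d}
	\frac{P_s(\d x)}{s^\alpha+|x|^\alpha}<\infty\right\},
\]
with the convention $\inf\varnothing:=1$. I would treat the two regimes $\beta\ge1$ and $\beta<1$ separately. When $\beta>1$, symmetry forces $\E(X_1)=0$ and the condition $\beta>1$ gives $\E|X_1|<\infty$, so Proposition~\ref{pr:Dim=1} applies directly and yields $\DimM(\cG_{_X})=1=\min\{(2\beta-1)_+/\beta\,,1\}$; when $\beta=1$ the Cauchy scaling $X_s\overset{d}{=}sX_1$ gives $\P\{|X_s|\le s\}=\P\{|X_1|\le1\}>0$ for all $s$, so bounding the denominator above by $2s^\alpha$ on $\{|x|\le s\}$ shows the displayed integral already diverges for every $\alpha\in(0\,,1)$, and again $\DimM(\cG_{_X})=1$.

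The substantive case is $0<\beta<1$, where $\min\{(2\beta-1)_+/\beta\,,1\}<1$. Here I would use the $\beta$-stable scaling $X_s\overset{d}{=}s^{1/\beta}X_1$ to rewrite the inner integral as $\E\big[(s^\alpha+s^{\alpha/\beta}|X_1|^\alpha)^{-1}\big]$ and split the underlying probability space according to which of the two summands in the denominator is the larger, i.e.\ according to whether $|X_1|\le s^{(\beta-1)/\beta}$ or not. On the ``bulk'' event the integrand is comparable to $s^{-\alpha}$, so its contribution is $\asymp s^{-\alpha}\P\{|X_1|\le s^{(\beta-1)/\beta}\}$, which I would estimate with the small-ball asymptotics $\P\{|X_1|\le r\}\asymp r^{d}$ as $r\downarrow0$ (the stable density is continuous and strictly positive at the origin). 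On the complementary event the integrand is comparable to $s^{-\alpha/\beta}|X_1|^{-\alpha}$, so its contribution is $\asymp s^{-\alpha/\beta}\,\E\big[|X_1|^{-\alpha};\,|X_1|>s^{(\beta-1)/\beta}\big]$, which I would control using the tail $\P\{|X_1|>r\}\asymp r^{-\beta}$ (valid because $\beta<2$) together with the fact that $\alpha<1\le d$, so that $\E|X_1|^{-\alpha}<\infty$. Integrating the two resulting power laws in $s$ over $[1\,,\infty)$ and tracking which exponent dominates then identifies the critical value of $\alpha$; the ``$\le$'' half of the theorem follows from the first-moment/Borel--Cantelli part of the machinery behind Theorem~\ref{th:Range:Levy}, and the ``$\ge$'' half from its second-moment/Paley--Zygmund part, which is available here because the potential measure of any L\'evy process satisfies the weak unimodality of Lemma~\ref{lem:U3} (applied in $\R^{d+1}$ to $Y$).

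The main obstacle is this last integral estimate for $\beta<1$: the answer is governed not by the near-origin behaviour of $X_1$ (carrying the exponent $d$) or its heavy tail (carrying $\beta$) in isolation, but by the precise way these two effects balance against the time integration, and it is that balance which produces the exponent $(2\beta-1)_+/\beta$ and, in particular, the degeneration to $0$ once $\beta\le\nicefrac12$. A secondary point that should be recorded is that the reduction via Theorem~\ref{th:Range:Levy} is legitimate for $Y$ despite $\U_{_Y}$ being only $\sigma$-finite: its restriction to $[0\,,T]\times\R^d$ has mass $T$, which is exactly what keeps the truncated integrals above finite and the argument of that theorem applicable.
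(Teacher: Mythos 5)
Your setup is exactly the paper's: reduce to the integral criterion via Theorem \ref{th:Range:Levy} applied to $Y_t=(t\,,X_t)$ together with Lemma \ref{lem:0<Dim<1}, dispose of $\beta>1$ by Proposition \ref{pr:Dim=1} and of $\beta=1$ by a direct divergence argument, and for $\beta<1$ split the inner integral at $|x|\asymp s$, which after scaling becomes your split at $|X_1|\asymp s^{(\beta-1)/\beta}$; this is precisely the paper's decomposition into $\mathcal{T}_1+\mathcal{T}_2$. Your two intermediate estimates are also correct. The gap is the one step you leave unexecuted (``tracking which exponent dominates then identifies the critical value''): carried out, that bookkeeping does \emph{not} produce $(2\beta-1)_+/\beta$. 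The bulk term contributes $\asymp\int_1^\infty s^{-\alpha-d(1-\beta)/\beta}\,\d s$, finite iff $\alpha>1-d(1-\beta)/\beta$; the tail term contributes $\asymp\int_1^\infty s^{-\alpha/\beta}\,\d s$, finite iff $\alpha>\beta$. Since $(\beta-1)^2>0$ gives $\beta>2-\beta^{-1}\ge 1-d(1-\beta)/\beta$ for every $\beta\in(0\,,1)$ and $d\ge1$, the full integral converges precisely when $\alpha>\beta$, so your own estimates yield $\DimM(\cG_{_X})=\beta$ for $0<\beta<1$, not $(2\beta-1)_+/\beta$. You can cross-check against \eqref{red2}: $\U_{_Y}(B(0;2^n))=\int_0^{2^n}\P\{|X_s|<2^n\}\,\d s\asymp 2^{n\beta}$ when $\beta<1$, because $\P\{|X_s|<2^n\}\asymp1$ for $s\le 2^{n\beta}$ while the range $s\in[2^{n\beta},2^n]$ contributes only $O(2^{n\beta})$ by the bounded-density bound.

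You should be aware that the paper's proof takes the identical route and, at the corresponding point, records in \eqref{T2} that $\mathcal{T}_2<\infty$ iff $0<\alpha<\beta$ --- the reverse of what $\int_1^\infty s^{-\alpha/\beta}\,\d s$ actually gives --- and it is only with that reversed inequality that the convergence set becomes $(2-\beta^{-1},\beta)$ and the infimum $(2\beta-1)_+/\beta$. So your proposal does not close the argument, but the obstruction is not a missing trick on your side: your (correct) estimates are incompatible with the displayed formula, and you must either locate an error in them (I do not see one) or flag the discrepancy with the statement of Theorem \ref{th:Graph}. Two smaller points: for $d\ge2$ the sharp small-ball rate is $r^d$, as you use, rather than the $r^1$ upper bound the paper invokes for the ``iff'' in \eqref{T1}; and your closing remark about $\U_{_Y}$ being only $\sigma$-finite is well taken --- no potential measure of a L\'evy process has finite total mass, and what the argument genuinely needs is that $\U_{_Y}$ is Radon, which transience of $Y$ supplies.
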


Figure \ref{fig:f} below shows the graph of  the preceding function of $\beta$.

\begin{figure}[!ht]\centering
	\begin{tikzpicture}[scale=4]
		\draw[->,thin] (-0.1,0) -- (2.1,0) node[right] {$\beta$};
		\draw[->,thin] (0,-0.1) -- (0,1.1) node[above] {$g(\beta)$};
		\draw[ultra thin] (0,0) -- (0,0.05);\node at (-0.1,-0.1) {0};
		\draw[ultra thin] (0.5,-0.05) -- (0.5,0.05);\node at (0.5,-0.1) {$\nicefrac12$};
		\draw[ultra thin] (1,-0.05) -- (1,0.05);\node at (1,-0.1) {1};
		\draw[ultra thin] (2,-0.05) -- (2,0.05);\node at (2,-0.1) {2};
		\draw[ultra thin] (-0.05,1) -- (0.05,1);\node at (-0.1,1) {1};
		\draw[ultra thick,color=blue!80,rounded corners=1] (0,0) -- (0.5,0) --
			(0.55,  0.1818) -- (0.6,0.3333) -- (0.65, 0.4615)--
			(0.7,0.5714) -- (0.75,0.6667) -- (0.8,0.75) -- (0.85,0.8235)--
			(0.9,0.889)--(0.95, 0.9474) -- (1,1)-- (2,1);
		\draw[ultra thin,dashed] (0,1) -- (1,1);
		\draw[ultra thin,dashed] (1,0) -- (1,1);
		\draw[ultra thin,dashed] (2,0) -- (2,1);
	\end{tikzpicture}
	\caption{A plot of $g(\beta) := \min\big\{(2\beta-1)_+/\beta\,,1\big\}$}
	\label{fig:f}
\end{figure}
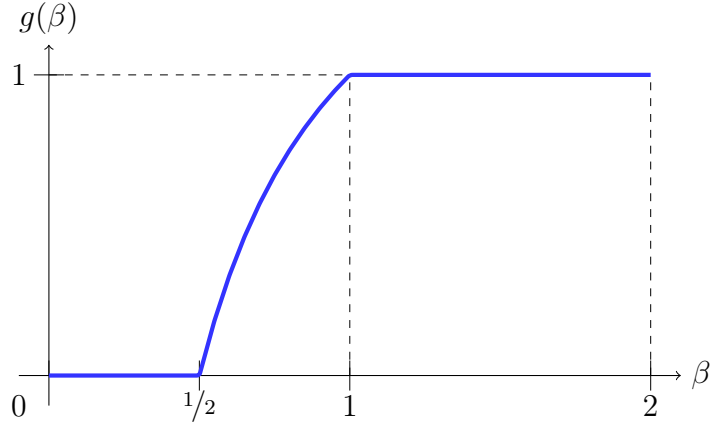
		
\begin{proof}[Proof of Theorem \ref{th:Graph}]
	If $\beta>1$, then $X_1$ is $\P$-integrable and $\E(X_1)=0$ by symmetry,
	and the result follows from Proposition \ref{pr:Dim=1}. In the remainder
	of the proof we assume that $0<\beta\le 1$.
	
	Let us observe the elementary estimate,
	\begin{equation}\label{elest}\begin{split}
		\int_1^\infty \d s\int_{\R^d}
			\frac{P_s(\d x)}{s^\alpha + |x|^\alpha}&\asymp
			\int_1^\infty \d s\int_{|x|<s}
			\frac{P_s(\d x)}{s^\alpha} +
			\int_1^\infty \d s\int_{|x|\ge s}
			\frac{P_s(\d x)}{|x|^\alpha}\\
		&=:\mathcal{T}_1 + \mathcal{T}_2.
	\end{split}\end{equation}
	
	For all $0<\alpha<1$,
	\[
		\mathcal{T}_1=\int_1^\infty\P\{ |X_s|<s\}\,\frac{\d s}{s^\alpha}
		=\int_1^\infty\P\{ |X_1|<s^{-(1-\beta)/\beta}\}\,\frac{\d s}{s^\alpha},
	\]
	by scaling. It is well known that $X_1$ has a bounded, continuous, and strictly positive
	density function on $\R^d$. 
	This fact implies that 
	\[
		\P\left\{|X_1|<s^{-(1-\beta)/\beta}\right\}\lesssim
		s^{-(1-\beta)/\beta}\qquad\text{uniformly for all $s>1$.}
	\]
	In particular, it follows that
	\begin{equation}\label{T1}
		\mathcal{T}_1<\infty
		\quad\text{iff}\quad1>\alpha>2-\beta^{-1}.
	\end{equation}
	
	Next, one might note that if $0<\alpha<1$, then
	\[
		\mathcal{T}_2=
		\int_1^\infty\E\left(|X_1|^{-\alpha};\, |X_1|\ge s^{1-(1/\beta)}\right)
		\frac{\d s}{s^{\alpha/\beta}},
	\]
	by scaling. Because $X_1$ has a strictly positive and bounded
	density in $B(0;2)$, the inequalities
	\[
		\E\left(|X_1|^{-\alpha};\, |X_1|\ge 1\right)\le
		\E\left(|X_1|^{-\alpha};\, |X_1|\ge s^{1-(1/\beta)}\right)\le
		\E\left(|X_1|^{-\alpha}\right)
	\]
	together imply that
	\begin{equation}\label{T2}
		\mathcal{T}_2<\infty
		\quad\text{iff}\quad0<\alpha<\beta.
	\end{equation}
	The theorem follows from \eqref{DimDim}, \eqref{elest},
	\eqref{T1}, and \eqref{T2}.
\end{proof}

\subsection{Application to subordinators}
Let us now consider the special case that the L\'evy process
$X$ is a subordinator. To be concrete, by the latter we mean that
$X$ is a L\'evy process on $\R$ such that $X_0=0$ and the sample function
$t\mapsto X_t$ is
a.s.\ nondecreasing. If we assume further that $\P\{X_1>0\}>0$, then it
follows readily that $\lim_{t\to\infty}X_t=\infty$ a.s.\ and hence
$X$ is transient. As is customary, one prefers to study subordinators
via their \emph{Laplace exponent} $\Phi_{_X}:\R_+\to\R_+$.
The Laplace exponent of $X$ is defined via the identity
\[
	\E\exp(-\lambda X_t)=\exp(-t\Phi_{_X}(\lambda)),
\]
valid for all $t,\lambda\ge0$.
It is easy to see that $\Phi_{_X}(\lambda)=\Psi_{_X}(i\lambda)$, where $\Psi_{_X}$ now
denotes [the analytic continuation, from $\R$ to $i\R$, of] the characteristic exponent of $X$.

\begin{theorem}\label{th:Range:Subord}
	If $\Phi_{_X}:\R_+\to\R_+$ denote the Laplace exponent of a subordinator
	$X$ on $\R_+$, then
	\[
		\DimM(\cR_{_X}) = \inf\left\{ 0<\alpha<1:
		\int_0^\infty\frac{\d y}{y^{1-\alpha}\,\Phi_{_X}(y)}<
		\infty \right\}\qquad\text{a.s.},
	\]
	where $\inf\varnothing:=1$.
\end{theorem}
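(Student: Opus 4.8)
The plan is to reduce Theorem~\ref{th:Range:Subord} to the potential-measure criterion of Theorem~\ref{th:Range:Levy} by expressing the integrability condition $\int_{\R}(1+|x|^\alpha)^{-1}\,\U_{_X}(\d x)<\infty$ in terms of the Laplace exponent $\Phi_{_X}$. Since $X$ is a subordinator and hence supported on $\R_+$, the potential measure $\U_{_X}$ lives on $[0\,,\infty)$, and its Laplace transform is the familiar renewal-type identity
\[
	\int_0^\infty \e^{-\lambda x}\,\U_{_X}(\d x)
	= \int_0^\infty \E\e^{-\lambda X_t}\,\d t
	= \int_0^\infty \e^{-t\Phi_{_X}(\lambda)}\,\d t
	= \frac{1}{\Phi_{_X}(\lambda)},
	\qquad\lambda>0.
\]
By the remark following Theorem~\ref{th:Range:Levy}, finiteness of $\U_{_X}$ lets us replace $(1+|x|^\alpha)^{-1}$ by $|x|^{-\alpha}\1_{\{|x|>1\}}$, and on $\R_+$ this is, up to constants, $x^{-\alpha}\1_{\{x>1\}}$. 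So the task becomes: show that $\int_1^\infty x^{-\alpha}\,\U_{_X}(\d x)<\infty$ if and only if $\int_0^1 y^{\alpha-1}/\Phi_{_X}(y)\,\d y<\infty$ (note the small-$y$ behavior of $\Phi_{_X}$ governs the large-$x$ mass of $\U_{_X}$, so the stated integral $\int_0^\infty y^{\alpha-1}/\Phi_{_X}(y)\,\d y$ automatically converges at $\infty$ when $0<\alpha<1$, since $\Phi_{_X}(y)\gtrsim y^{?}$ — more precisely $\Phi_{_X}$ is nondecreasing and $\Phi_{_X}(y)/y\to$ drift, so $1/(y^{1-\alpha}\Phi_{_X}(y))\lesssim y^{-1-(1-\alpha)}$ or better for large $y$).

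The first step is to record the Laplace-transform identity above and the elementary real-variable fact that, using $\int_0^\infty y^{\alpha-1}\e^{-\lambda x y}\,\d y = \Gamma(\alpha)(\lambda x)^{-\alpha}$ for $0<\alpha<1$ (here I will take $\lambda=1$ for concreteness), one has by Tonelli
\[
	\int_0^\infty x^{-\alpha}\,\U_{_X}(\d x)
	\;\asymp\; \int_0^\infty \d y\, y^{\alpha-1}\int_0^\infty \e^{-xy}\,\U_{_X}(\d x)
	\;=\; \int_0^\infty \frac{y^{\alpha-1}}{\Phi_{_X}(y)}\,\d y .
\]
The only care needed is the region $x\le 1$ on the left and the region $y$ large on the right. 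For the left side, $\int_{[0,1]}x^{-\alpha}\,\U_{_X}(\d x)$ may diverge (e.g. $\U_{_X}$ can have an atom or heavy mass at $0$ when there is a drift), so I would instead write $\int_1^\infty x^{-\alpha}\,\U_{_X}(\d x)\asymp \int_1^\infty \e^{-x}x^{-\alpha}\,\U_{_X}(\d x) + O(1)$ and use $x^{-\alpha}\e^{-x}\asymp \int_0^\infty y^{\alpha-1}\e^{-xy}\,\d y\cdot(\text{cutoff})$; alternatively, and more cleanly, bound $\int_1^\infty x^{-\alpha}\,\U_{_X}(\d x) \le \int_0^\infty \e^{1-x}\,x^{-\alpha}\,\U_{_X}(\d x)$ and handle the integral $\int_0^1 x^{-\alpha}\e^{-x}\U_{_X}(\d x)$ separately — it is finite precisely when... hmm. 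Actually the clean route: since $\U_{_X}$ is finite, $\int_1^\infty x^{-\alpha}\,\U_{_X}(\d x)<\infty\iff \int_0^\infty (x\wedge 1)^{-\alpha}\cdots$ no — let me just say I will show $\int_1^\infty x^{-\alpha}\,\U_{_X}(\d x)<\infty \iff \int_0^\infty \frac{y^{\alpha-1}}{\Phi_{_X}(y)}\,\d y<\infty$ by splitting both integrals at $x=1$ and $y=1$ respectively, matching $\{x>1\}\leftrightarrow\{y<1\}$ via the identity above, and absorbing $\{x\le 1\}$ and $\{y\ge 1\}$ into finite error terms (for the $y$-side: $\int_1^\infty y^{\alpha-1}/\Phi_{_X}(y)\,\d y \le \Phi_{_X}(1)^{-1}\int_1^\infty y^{\alpha-1}\,\d y$... which diverges for $0<\alpha<1$! — so in fact the convergence at $y=\infty$ must come from $\Phi_{_X}$ growing, i.e. $\Phi_{_X}(y)\gtrsim y\cdot\text{const}$ when there's a drift, or $\Phi_{_X}(y)\to\infty$; one must use that $\Phi_{_X}$ is a Bernstein function with $\Phi_{_X}(y)\to\infty$ or $\Phi_{_X}(y)\asymp \text{drift}\cdot y$ at infinity, making $\int_1^\infty y^{\alpha-1}/\Phi_{_X}(y)\,\d y$ converge for $0<\alpha<1$ automatically).

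The last step is to assemble: by the real-variable reduction \eqref{red2}-style argument inside the proof of Theorem~\ref{th:Range:Levy} (or directly from its statement plus the remark), $\DimM(\cR_{_X}) = \inf\{\alpha>0: \int_{\R}(1+|x|^\alpha)^{-1}\U_{_X}(\d x)<\infty\} = \inf\{\alpha>0: \int_1^\infty x^{-\alpha}\,\U_{_X}(\d x)<\infty\}$ a.s.; combining with the equivalence just established gives $\DimM(\cR_{_X})=\inf\{\alpha>0: \int_0^\infty y^{\alpha-1}/\Phi_{_X}(y)\,\d y<\infty\}$, and since Lemma~\ref{lem:0<Dim<1}-type reasoning (or simply $\cR_{_X}\subseteq\R$ so $\DimM(\cR_{_X})\le 1$, and the range of a subordinator is unbounded hence $\DimM\ge 0$) confines the exponent to $[0,1]$, the infimum may be taken over $0<\alpha<1$ with the convention $\inf\varnothing:=1$, which is exactly the stated formula. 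The main obstacle I anticipate is the bookkeeping at the endpoints of the two integrals — specifically justifying that the $\{x\le 1\}$ part of $\int x^{-\alpha}\U_{_X}(\d x)$ and the $\{y\ge 1\}$ part of $\int y^{\alpha-1}/\Phi_{_X}(y)\,\d y$ are always finite for $0<\alpha<1$ (the latter requiring a quantitative lower bound on $\Phi_{_X}$ at infinity, which follows from $\Phi_{_X}$ being a nonzero Bernstein function: $\Phi_{_X}(y)\ge \Phi_{_X}(1)\cdot\min(1,y)$ by concavity and monotonicity, giving $\int_1^\infty y^{\alpha-1}/\Phi_{_X}(y)\,\d y\le \Phi_{_X}(1)^{-1}\int_1^\infty y^{\alpha-2}\,\d y<\infty$ — wait, $y^{\alpha-1}/y = y^{\alpha-2}$, and $\alpha-2<-1$, so this converges; good), so that the two divergences genuinely match up only through the region $\{x>1\}\leftrightarrow\{0<y<1\}$.
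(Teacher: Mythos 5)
Your route is the paper's own argument in analytic rather than probabilistic dress: the paper writes $x^{-\alpha}=\int_0^\infty\E\,\e^{-xY_s}\,\d s$ for an independent $\alpha$-stable subordinator $Y$ and applies Tonelli to get $\int_0^\infty x^{-\alpha}\,\U_{_X}(\d x)=\int_0^\infty\U_{_Y}(\d y)/\Phi_{_X}(y)=c\int_0^\infty y^{\alpha-1}\,\d y/\Phi_{_X}(y)$, which is precisely your identity $x^{-\alpha}=\Gamma(\alpha)^{-1}\int_0^\infty y^{\alpha-1}\e^{-xy}\,\d y$ combined with $\int_0^\infty \e^{-xy}\,\U_{_X}(\d x)=1/\Phi_{_X}(y)$. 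So the core computation matches; nothing is lost by dropping the process $Y$.

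The genuine gap sits exactly where you predicted it would, and your patch for it is wrong. The bound $\int_1^\infty y^{\alpha-1}/\Phi_{_X}(y)\,\d y\le\Phi_{_X}(1)^{-1}\int_1^\infty y^{\alpha-2}\,\d y$ presupposes $\Phi_{_X}(y)\ge\Phi_{_X}(1)\,y$ for $y\ge1$; concavity of the Bernstein function $\Phi_{_X}$ gives the \emph{reverse} inequality $\Phi_{_X}(y)\le\Phi_{_X}(1)\,y$, and monotonicity alone only yields $\Phi_{_X}(y)\ge\Phi_{_X}(1)$, which leaves you with the divergent $\int_1^\infty y^{\alpha-1}\,\d y$. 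Concretely, for a driftless $\rho$-stable subordinator one has $\Phi_{_X}(y)=y^\rho$, so $\int_1^\infty y^{\alpha-1-\rho}\,\d y=\infty$ whenever $\alpha\ge\rho$; correspondingly $\U_{_X}(\d x)\propto x^{\rho-1}\,\d x$ and $\int_0^1x^{-\alpha}\,\U_{_X}(\d x)=\infty$ for the same $\alpha$ (for a compound Poisson subordinator $\U_{_X}$ even carries an atom at $0$). Thus neither the $\{x\le1\}$ part nor the $\{y\ge1\}$ part is an absorbable error term: they diverge together and genuinely move the infimum, so the two divergences do \emph{not} match up only through $\{x>1\}\leftrightarrow\{0<y<1\}$. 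Taken literally with $\int_0^\infty$, the displayed criterion fails for every $\alpha\in(0,1)$ in the $\rho$-stable case and the formula would return $\inf\varnothing=1$ instead of the correct value $\rho$; the statement must be read with the $y$-integral restricted to $(0,1)$ (equivalently, with $\int_{x>1}x^{-\alpha}\,\U_{_X}(\d x)$ on the potential-measure side, as in the Remark following Theorem \ref{th:Range:Levy}), which is how the paper's own Example applies it. To make your proof complete you should carry out the truncated matching honestly: $\int_1^\infty y^{\alpha-1}\e^{-xy}\,\d y$ is comparable to a constant for $x\le1$ and to $x^{-\alpha}\cdot x^{\alpha-1}\e^{-x}$ for $x\ge1$, which shows $\int_1^\infty y^{\alpha-1}/\Phi_{_X}(y)\,\d y<\infty$ iff $\int_0^1x^{-\alpha}\,\U_{_X}(\d x)<\infty$, and hence that only the pairing $\{x>1\}\leftrightarrow\{0<y<1\}$ is relevant to $\DimM(\cR_{_X})$. (For what it is worth, the paper's proof has the same soft spot: its opening display replaces $(1+x^\alpha)^{-1}$ by $x^{-\alpha}$ on all of $(0,\infty)$ without justification.)
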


Theorem \ref{th:Range:Subord} is the macroscopic analogue
of a theorem of Horowitz \cite{Horowitz68} (see also \cite{Bertoin:Subord} for more results)
 which gave a formula
for the microscopic Hausdorff dimension
of the range of a subordinator. The following
highlights a standard application of subordinators
to the study of level sets of Markov process; see Bertoin
\cite{Bertoin:Subord} for much more on this connection.

\begin{example}
	Let $X$ be a symmetric, $\beta$-stable process on $\R$
	where $1<\beta\le 2$. It is well known that
	$X^{-1}\{0\}:=\{s>0:\, X_s=0\}$ is a.s.\ nonempty, and
	coincides with the closure of the range of a stable subordinator
	$T:=\{T_t\}_{t\ge0}$ of index $1-\beta^{-1}$.  Since $T$ is c\`adl\`ag,
	$\cR_{_T}$ and its closure differ by at most a countable set a.s. Therefore,
	\[
		\DimM\left( X^{-1}\{0\}\right) = \inf\left\{0<\alpha<1:\
		\int_0^1 \frac{\d t}{y^{1-\alpha+1-(1/\beta)}}<\infty\right\}
		= 1 - \beta^{-1}\qquad\text{a.s.,}
	\]
	by Theorem \ref{th:Range:Subord}.
\end{example}

\begin{proof}[Proof of Theorem \ref{th:Range:Subord}]
	The proof uses as its basis an old idea which is basically a
	``change of variables for subordinators,'' and is loosely
	connected to Bochner's method of subordination (see Bochner \cite{Bochner55}).
	Before we get to that, let us first observe the following ready consequence
	of Theorem \ref{th:Range:Levy}:
	\[
		\DimM(\cR_{_X}) = \inf\left\{ 0<\alpha< 1:\
		\int_0^\infty x^{-\alpha}\,\U_{_X}(\d x) <\infty \right\}\qquad\text{a.s.}
	\]
	
	Now let us choose and fix some $\alpha\in(0\,,1)$, and
	let $Y :=\{Y_s\}_{s\ge0}$ be an independent $\alpha$-stable subordinator,
	normalized to satisfy $\Phi_{_Y}(x)=x^\alpha$
	for every $x\ge0$.
	Since $x^{-\alpha}=\int_0^\infty\exp(-sx^\alpha)\,\d s
	=\int_0^\infty\E\exp(-xY_s)\,\d s$, a few
	back-to-back appeals to the Tonelli theorem yield the following
	probabilistic change-of-variables formula:\footnote{
			The same argument shows that if $X$ and $Y$ are independent
			subordinators, then we have the change-of-variables formula,
			\[
				\int_0^\infty\frac{\U_{_X}(\d x)}{\Phi_{_Y}(x)}
				=\int_0^\infty\frac{\U_{_Y}(\d y)}{\Phi_{_X}(y)}.
			\]}
	\begin{align*}
		\int_0^\infty x^{-\alpha}\,\U_{_X}(\d x) &= \E\left[\int_0^\infty \U_{_X}(\d x)
			\int_0^\infty\d s\ \e^{-xY_s}\right]
			=\E\left[\int_0^\infty\d t\int_0^\infty\d s\ \e^{-X_tY_s}\right]\\
		&= \int_0^\infty\d t\int_0^\infty\d s\
			\E\left[\e^{-t\Phi_{_X}(Y_s)}\right]
			= \E\left[\int_0^\infty \frac{\d s}{\Phi_{_X}(Y_s)}\right] =
			\int_0^\infty\frac{\U_{_Y}(\d y)}{\Phi_{_X}(y)}.
	\end{align*}
	It is well-known that $\U_{_Y}(\d y)\ll\d y$ (or one can verify this directly using transition
	density or characteristic function of $Y$),
	and the Radon--Nikodym density $u_{_Y}(y) := \U_{_Y}(\d y)/\d y$---this
	is the socalled \emph{potential density of $Y$}---is given by
	$u_{_Y}(y) =c y^{-1+\alpha}$ for all $y>0$, where
	$c=c(\alpha)$ is a positive and finite constant [this follows from the scaling
	properties of $Y$]. Consequently, we see that
	$\int_0^\infty x^{-\alpha}\,\U_{_X}(\d x)<\infty$ for some $0<\alpha<1$
	if and only if $\int_0^\infty y^{-1+\alpha}\,\d y/\Phi_{_X}(y)<\infty$ for
	the same value of $\alpha$. The theorem follows from this.
\end{proof}

\section{Tall Peaks of Symmetric Stable Processes}

Let $B = \{B_t\}_{t \ge 0}$ be a standard Brownian motion.
For every $\alpha > 0$, let us consider the set
\begin{equation}\label{Eq:TB1}
	\cH_{_B}(\alpha) :=\left\{t\ge\e:\
	B_t\ge \alpha\sqrt{2t\log\log t} \right\}.
\end{equation}
In the terminology of Khoshnevisan, Kim, and Xiao \cite{KKX}, 
the random set $\cH_{_B}(\alpha)$ denotes the collection of \emph{the tall peaks of $B$ 
in length scale $\alpha$.}

Theorem \ref{Th:LIL} below follows from the law of the iterated  logarithm for 
Brownian motion for $\alpha \ne 1$. The critical case of $\alpha =1$ follows from Motoo
\cite[Example 2]{Motoo}.

\begin{theorem}\label{Th:LIL}
	$\mathcal{H}_{_B}(\alpha)$ is a.s.\ unbounded if
	$0<\alpha\le 1$ and is a.s.\ bounded if
	$\alpha>1$.
\end{theorem}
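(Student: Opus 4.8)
The plan is to separate the two regimes $0<\alpha\le 1$ and $\alpha>1$ and to reduce the boundedness question to a classical statement about the large-time behaviour of Brownian sample paths. For $\alpha>1$, the claim that $\cH_{_B}(\alpha)$ is a.s.\ bounded is just a restatement of the upper half of the law of the iterated logarithm: since $\limsup_{t\to\infty} B_t/\sqrt{2t\log\log t}=1$ a.s., there is a.s.\ a (random) time $T$ beyond which $B_t< \alpha\sqrt{2t\log\log t}$, so $\cH_{_B}(\alpha)\subseteq[\e,T]$ is bounded. Similarly, for $0<\alpha<1$ the lower bound in the LIL, namely $\limsup_{t\to\infty} B_t/\sqrt{2t\log\log t}=1>\alpha$, forces $B_t\ge\alpha\sqrt{2t\log\log t}$ for arbitrarily large $t$, so $\cH_{_B}(\alpha)$ is a.s.\ unbounded. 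Thus the only case requiring real work is the critical value $\alpha=1$.

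For the critical case $\alpha=1$ I would invoke the theorem of Motoo \cite{Motoo}, which gives an integral test governing how often a transient-type diffusion (here Brownian motion) crosses a given time-dependent boundary $t\mapsto \psi(t)$ infinitely often as $t\to\infty$. Concretely, Motoo's test says that $\{t:\,B_t\ge \psi(t)\}$ is unbounded a.s.\ or bounded a.s.\ according as a certain integral $\int^{\infty} (\text{something involving }\psi)\,\d t$ diverges or converges; for the boundary $\psi(t)=\sqrt{2t\log\log t}$ one checks that this integral diverges, so $\cH_{_B}(1)$ is a.s.\ unbounded. In the write-up I would state the relevant special case of Motoo's result (or simply cite \cite[Example 2]{Motoo}, where exactly this boundary is treated), verify the divergence of the integral by an elementary computation, and conclude. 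The only subtlety is making sure the "$t\ge\e$" truncation and the precise normalisation of the boundary match the hypotheses of Motoo's theorem, which is routine.

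The main obstacle, such as it is, is the critical case: away from $\alpha=1$ the statement is immediate from the LIL, but at $\alpha=1$ one genuinely needs an integral test, since the $\limsup$ being exactly $1$ means the boundary is hit infinitely often but the naive LIL statement alone is not decisive about whether the hitting times accumulate at $\infty$. Motoo's theorem resolves this, and I would lean on it directly rather than reprove it. (The later sections of the paper will replace this soft argument by a quantitative one that also yields the macroscopic dimension and monofractality of $\cH_{_B}(\alpha)$ and its stable-process analogues, but for Theorem~\ref{Th:LIL} itself the LIL plus Motoo's test suffices.)
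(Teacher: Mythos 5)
Your proposal is correct and is essentially identical to the paper's own treatment: the paper likewise disposes of the cases $\alpha\neq 1$ by the law of the iterated logarithm and settles the critical case $\alpha=1$ by appealing to Motoo's integral test, citing exactly \cite[Example 2]{Motoo}. Your identification of $\alpha=1$ as the only case needing more than the LIL, and the reduction to Motoo's test there, matches the intended argument.
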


Recently, Khoshnevisan, Kim, and Xiao \cite{KKX} showed that the macroscopic 
Hausdorff dimension  of $\mathcal{H}_{_B}(\alpha)$ is 1 almost surely if $\alpha \le 1$.
Since the macroscopic 
Hausdorff dimension never exceeds the Minkowski dimension (see Barlow and Taylor 
\cite{BarlowTaylor}) Theorem \ref{Th:LIL} implies the following.

\begin{theorem}\label{Th;KKX}
	$\DimM(\cH_{_B}(\alpha)) = 1$ a.s.\
	for every $0<\alpha\le 1$.
\end{theorem}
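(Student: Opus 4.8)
The plan is to prove $\DimM(\cH_{_B}(\alpha)) = 1$ a.s.\ for every $0 < \alpha \le 1$ by establishing the two inequalities separately. The upper bound $\DimM(\cH_{_B}(\alpha)) \le 1$ is free: $\cH_{_B}(\alpha) \subseteq [\e\,,\infty) \subseteq \R$, so by the monotonicity property of $\DimM$ listed in \S2 together with $\DimM(\R) = 1$, we immediately get $\DimM(\cH_{_B}(\alpha)) \le 1$ almost surely. So the entire content is the lower bound $\DimM(\cH_{_B}(\alpha)) \ge 1$ a.s.

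For the lower bound I would first reduce to the easiest possible case using monotonicity: since $\cH_{_B}(\alpha) \supseteq \cH_{_B}(1)$ for every $\alpha \in (0\,,1]$ (a larger threshold gives a smaller set), it suffices to prove $\DimM(\cH_{_B}(1)) \ge 1$ a.s. The cleanest route is to invoke the result of Khoshnevisan, Kim, and Xiao \cite{KKX} quoted in the paragraph preceding the statement: the \emph{macroscopic Hausdorff dimension} of $\cH_{_B}(\alpha)$ is $1$ a.s.\ for $\alpha \le 1$. Since (by Barlow and Taylor \cite{BarlowTaylor}) the macroscopic Hausdorff dimension never exceeds the macroscopic Minkowski dimension, this gives $\DimM(\cH_{_B}(1)) \ge 1$ a.s., hence the theorem. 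This is really the intended one-line argument, and strictly speaking no further work is needed once those two external facts are granted.

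If one instead wants a self-contained proof not relying on \cite{KKX}, the natural approach is a direct second-moment / counting argument. For each integer $n$, consider the shell $\cS_n$ and count the pixels $k \in \cS_n \cap \Z$ such that $Q(k) \cap \cH_{_B}(1) \ne \varnothing$, i.e.\ such that $B_t \ge \sqrt{2t \log\log t}$ for some $t \in [k\,,k+1)$. One estimates $\P\{Q(k) \cap \cH_{_B}(1) \ne \varnothing\}$ from below: by the Gaussian tail and the reflection principle, $\P\{ B_k \ge \sqrt{2k\log\log k}\}$ is of order $(\log k)^{-1}(\log\log k)^{-1/2}$, which is $k^{-o(1)}$, so the expected number of good pixels in $\cS_n$ is $2^{n(1 + o(1))}$. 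To convert this into an almost-sure lower bound on the $\limsup$, I would bound the second moment of the count, using the Markov property of $B$ to control $\P\{Q(j) \cap \cH_{_B}(1) \ne \varnothing\,,\ Q(k) \cap \cH_{_B}(1) \ne \varnothing\}$ for $j < k$ by roughly the product of the individual probabilities (increments over disjoint time intervals are independent, and one controls the contribution of $B_j$ being large at the later time by a union bound), then apply the Paley--Zygmund inequality along a subsequence as in the proof of Theorem \ref{th:Range:Levy}, and finally a Borel--Cantelli argument to pass to the $\limsup$.

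The main obstacle in the self-contained route is the second-moment estimate: the events $\{Q(j) \cap \cH_{_B}(1) \ne \varnothing\}$ are positively correlated through the value of $B$ at the common past, and one must check that this correlation is weak enough (contributes only a bounded multiplicative constant, exactly condition \eqref{Var:cond}) rather than dominating. Handling the slowly varying $\log\log t$ factor uniformly across a shell $\cS_n$, and the fact that membership involves an interval $[k\,,k+1)$ rather than a single time, are technical nuisances but not serious. Given that \cite{KKX} already does the harder macroscopic-Hausdorff computation, I would present the short argument: monotonicity plus the Hausdorff-below-Minkowski inequality, exactly as the surrounding text signals.
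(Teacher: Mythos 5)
Your argument is exactly the paper's: the theorem is stated as an immediate consequence of the Khoshnevisan--Kim--Xiao result that the macroscopic Hausdorff dimension of $\cH_{_B}(\alpha)$ is $1$ a.s.\ for $\alpha\le 1$, combined with the Barlow--Taylor fact that macroscopic Hausdorff dimension never exceeds Minkowski dimension (the upper bound $\DimM\le 1$ being trivial). Your additional second-moment sketch is a reasonable outline of a self-contained alternative but is not needed and is not what the paper does.
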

Together, Theorems \ref{Th:LIL} and \ref{Th;KKX} imply that the tall peaks
of Brownian motion are macroscopic monofractals in the sense that
either $\DimM(\cH_{_B}(\alpha))=1$ or $\DimM(\cH_{_B}(\alpha))=0$.
In this section we extend the above results to facts about
all symmetric stable L\'evy processes. However, we are quick to point
out that the proofs, in the stable case,
are substantially more delicate than those in the Brownian case. 

Let $X = \{X_t\}_{t \ge 0}$ be a real-valued,
symmetric $\beta$-stable L\'evy process
for some $\beta\in(0\,,2)$. We have ruled out the case $\beta=2$
since $X$ is Brownian motion in that case, and there is nothing new
to be said about $X$ in that case. To be concrete, the process
$X$ will be scaled so that it satisfies
\begin{equation}\label{scale:stable}
	\E\exp(izX_t) = \exp(-t|z|^\beta)\qquad\text{for every $t\ge0$
	and $z\in\R$}.
\end{equation}

In analogy with \eqref{Eq:TB1}, for every $\alpha>0$,
let us consider the following set
\[
	\mathcal{H}_{_X}(\alpha) := \left\{ t\ge\e:\ X_t 
	\ge t^{1/\beta} (\Log t)^\alpha \right\}
\]
of tall peaks of $X$, parametrized by a ``scale factor'' $\alpha>0$. 
The following is a re-interpretation of a classical
result of Khintchine \cite{Khintchine}.

\begin{theorem}
	\label{th:Khintchine}
	$\mathcal{H}_{_X}(\alpha)$ is a.s.\ unbounded if $0<\alpha\le\beta^{-1}$,
	and it is a.s.\ bounded if $\alpha >\beta^{-1}$.
\end{theorem}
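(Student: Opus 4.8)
The plan is to treat this as a Borel--Cantelli problem for the tail events $\{X_t \ge t^{1/\beta}(\Log t)^\alpha\}$, exploiting the stable scaling \eqref{scale:stable} to reduce everything to the tail behavior of $X_1$. By scaling, $\P\{X_t \ge t^{1/\beta}(\Log t)^\alpha\} = \P\{X_1 \ge (\Log t)^\alpha\}$, and the classical Blumenthal--Getoor tail estimate for symmetric $\beta$-stable laws gives $\P\{X_1 \ge u\} \asymp u^{-\beta}$ as $u\to\infty$. Hence $\P\{X_t \ge t^{1/\beta}(\Log t)^\alpha\} \asymp (\Log t)^{-\alpha\beta}$ for large $t$, which decays polynomially in $\Log t$, i.e. logarithmically in $t$. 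The threshold $\alpha = \beta^{-1}$ is exactly where $\alpha\beta = 1$, the borderline for summability of $\sum_n 1/n$.

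First I would record the scaling identity and the tail asymptotics of $X_1$, noting that $X_1$ has a bounded continuous density and $\P\{X_1 \ge u\}\sim c\,u^{-\beta}$; this is standard for symmetric stable laws. Next, for the boundedness half ($\alpha > \beta^{-1}$): sample the process along a geometric sequence of times $t_n := 2^n$ (or $e^n$), so that the probabilities $p_n := \P\{X_{t_n}\ge t_n^{1/\beta}(\Log t_n)^\alpha\} \asymp n^{-\alpha\beta}$ are summable since $\alpha\beta>1$. Borel--Cantelli then kills the discrete-time excursions; to handle the continuum of $t$ between $t_n$ and $t_{n+1}$, I would cover the interval with finitely many sub-intervals and use a maximal inequality (e.g. the reflection-type / Lévy maximal inequality for symmetric processes, or Etemadi's inequality) to bound $\P\{\sup_{t\in[t_n,t_{n+1}]} X_t \ge \text{threshold}\}$ by a constant multiple of the single-time probability, at the cost of adjusting constants. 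Since $(\Log t)^\alpha$ is slowly varying, the threshold is essentially constant on each dyadic block, so this costs only a bounded factor, and summability survives.

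For the unboundedness half ($0<\alpha\le\beta^{-1}$), I would use the second Borel--Cantelli lemma, which requires independence. The natural device is to look at increments over a geometric time grid: along $t_n := a^n$ for a suitably large base $a$, consider the events $E_n := \{X_{t_{n+1}} - X_{t_n} \ge 2\,t_{n+1}^{1/\beta}(\Log t_{n+1})^\alpha\}$, which are independent by stationary independent increments. By scaling, $\P(E_n) \asymp (\Log t_{n+1})^{-\alpha\beta} \asymp n^{-\alpha\beta}$, and since $\alpha\beta\le 1$ this sum diverges; hence infinitely many $E_n$ occur a.s. On $E_n$ one has $X_{t_{n+1}} \ge 2 t_{n+1}^{1/\beta}(\Log t_{n+1})^\alpha + X_{t_n}$, and I would argue that the ``background'' term $X_{t_n}$ is, with overwhelming probability, at least $-t_{n+1}^{1/\beta}(\Log t_{n+1})^\alpha$ (a one-sided lower bound on $X_{t_n}$ of much smaller order, controllable by a crude tail bound plus Borel--Cantelli, using that the negative tail of $X_{t_n}$ at that level is summable), so that $t_{n+1}\in\mathcal H_{_X}(\alpha)$ infinitely often.

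The main obstacle is the control of the ``background'' term $X_{t_n}$ in the divergence half: one must ensure that adding back $X_{t_n}$ does not destroy the event $\{X_{t_{n+1}} \ge t_{n+1}^{1/\beta}(\Log t_{n+1})^\alpha\}$. Because $X_{t_n}$ can be as negative as order $t_n^{1/\beta}\cdot(\text{slowly varying})$, and $t_n = t_{n+1}/a$ with $a$ fixed, a naive bound gives $X_{t_n}$ of the same order $t_{n+1}^{1/\beta}$ as the target threshold — so the factor $2$ in the definition of $E_n$ and a careful choice of the gap (possibly letting the block length grow, e.g. $t_{n+1}/t_n\to\infty$, so that $t_n^{1/\beta}/t_{n+1}^{1/\beta}\to 0$) is what makes this work. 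The cleanest route is to take $t_n := 2^{n^2}$ or similar, so that $t_n = o(t_{n+1})$, making the background term negligible relative to the threshold at time $t_{n+1}$; one then checks $\sum_n \P(E_n) = \sum_n (\Log t_{n+1})^{-\alpha\beta} \asymp \sum_n n^{-2\alpha\beta}$ still diverges for $\alpha\le \beta^{-1}$ only when $2\alpha\beta\le 1$ — which forces the more delicate balance, so in fact one wants the slowest-growing grid for which independence-via-increments still gives divergence, and the tension between ``grid sparse enough to neglect background'' and ``grid dense enough for divergence'' is the real crux. Resolving it — likely by a two-scale argument or by keeping the geometric grid $t_n=a^n$ and instead controlling $\max_{k\le n} X_{t_k}$ from below via the LIL-type lower bound or a direct argument that the running minimum of $X$ grows no faster than $t^{1/\beta}(\Log t)^{\beta^{-1}-\epsilon}$ — is where the bulk of the work lies.
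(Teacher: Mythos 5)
Your convergence half ($\alpha>\beta^{-1}$) is essentially the paper's own argument: the paper likewise combines the tail asymptotic $\lambda^{\beta}\P\{X_1>\lambda\}\to\varrho$ with the reflection-type maximal bound $\P\{X_t^*\ge\lambda\}\le 2\P\{X_t\ge\lambda\}$ and Borel--Cantelli along a geometric grid to conclude $X_t=o\bigl(t^{1/\beta}(\Log t)^{\alpha}\bigr)$ a.s. No issue there.

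The divergence half, however, contains a genuine gap which you have correctly located but not closed. The tension you describe is real and fatal to the argument as written: on a geometric grid $t_n=a^n$ the background term $X_{t_n}$ is \emph{not} negligible, because by symmetry the lower envelope of $X$ mirrors the upper one --- along any reasonable grid one has $X_{t_n}\le -t_n^{1/\beta}(\Log t_n)^{1/\beta-\epsilon}$ infinitely often a.s., so your proposed fix of bounding the running minimum by $t^{1/\beta}(\Log t)^{\beta^{-1}-\epsilon}$ is false (it would contradict the very statement being proved, applied to $-X$). Sparsifying to $t_n=2^{n^2}$ destroys divergence of $\sum_n\P(E_n)$ except when $\alpha\le(2\beta)^{-1}$, as you note; so the range $\alpha\in\bigl((2\beta)^{-1},\beta^{-1}\bigr]$ is left unproved. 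The paper sidesteps the issue entirely: it deduces unboundedness from the stronger Theorem \ref{th:Dim:stable:peaks}, which works with the one-time events $\cE_j=\{X_{T_j}\in\cI(j)\}$ (no increment decomposition, hence no background term) along the subgeometric grid $T_j=4^{\beta j^{\gamma}/\gamma}$, pays for the lost independence with explicit pairwise bounds $\P(\cE_j\cap\cE_k)\le\cP_j\cP_{j,k}$, and finishes with Paley--Zygmund on the counts $W_n$ plus the Hewitt--Savage $0$--$1$ law. If you want to keep your increment route, the cheapest repair is to set $A_n:=E_n\cap\{X_{t_n}\ge0\}$ with $t_n=a^n$: by symmetry and independence $\P(A_n)=\tfrac12\P(E_n)$ and $\P(A_m\cap A_n)\le 2\P(A_m)\P(A_n)$ for $m<n$, so Kochen--Stone gives $\P\{A_n\ \text{i.o.}\}>0$, Hewitt--Savage upgrades this to probability one, and on $A_n$ one has $X_{t_{n+1}}\ge 2t_{n+1}^{1/\beta}(\Log t_{n+1})^{\alpha}>t_{n+1}^{1/\beta}(\Log t_{n+1})^{\alpha}$. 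That repair is, in structure, the same second-moment-plus-zero-one-law mechanism the paper uses; without it (or something like it), your proposal does not yet prove the unboundedness assertion.
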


We include a proof for the sake of completeness.

\begin{proof}
	It suffices to study only the case that $\alpha  > \beta^{-1}$.
	The other case follows from the stronger assertion of
	Theorem \ref{th:Dim:stable:peaks}
	below.
	
	Recall from \cite[p.\ 221]{Bertoin96} that
	\begin{equation}\label{limlim}
		\varrho :=
		\lim_{\lambda\to\infty}\lambda^\beta\P\{ X_1> \lambda\}
	\end{equation}
	exists and is in $(0\,,\infty)$. Consequently,
	\begin{equation}\label{tail:stable}
		\P\{ X_t > t^{1/\beta}\lambda\} \asymp \lambda^{-\beta}\qquad
		\text{for all $\lambda\ge1$ and $t>0$}.
	\end{equation}
	
	Let
	\[
		X^*_t := \sup_{0\le s\le t}X_s
		\qquad\text{for all $t\ge0$.}
	\]
	The standard argument that yields the classical reflection principle also yields
	\[
		\P\left\{ X^*_t \ge \lambda\right\} \le 2\P\left\{
		X_t\ge\lambda\right\}
		\qquad\text{for all $t,\lambda>0$}.
	\]
	Therefore,  \eqref{tail:stable} implies that
	\[
		\P\left\{ X^*_t\ge \varepsilon t^{1/\beta}(\Log t)^\alpha\right\}
		\le 2 \P\left\{ X_t\ge \varepsilon
		t^{1/\beta}(\Log t)^\alpha\right\}\asymp (\Log t)^{-\alpha\beta},
	\]
	for all $t\ge\e$ and $\varepsilon>0$.
	This and the Borel--Cantelli lemma together show that, if
	$\alpha>\beta^{-1}$, then $X_t=o( t^{1/\beta}(\Log t)^\alpha)$
	as $t\to\infty$, a.s. In other words, $\mathcal{H}_{_X}(\alpha)$
	is a.s.\ bounded if $\alpha>\beta^{-1}$. This completes the proof.
\end{proof}

Theorem \ref{th:Khintchine} reduces the analysis of the peaks of
$X$ to the case where $\alpha\in(0\,,1/\beta]$. That case is
described by the following theorem, which is the promised extension
of Theorem \ref{Th;KKX} to the stable case.

\begin{theorem}\label{th:Dim:stable:peaks}
	If $0<\alpha\le\beta^{-1}$, then $\DimM(\cH_{_X}(\alpha))=1$ a.s.
\end{theorem}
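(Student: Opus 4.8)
The plan is to prove the lower bound $\DimM(\cH_{_X}(\alpha))\ge1$ almost surely, since the matching upper bound $\DimM(\cH_{_X}(\alpha))\le1$ is automatic: $\cH_{_X}(\alpha)\subseteq[\e,\infty)$, so $\pix(\cH_{_X}(\alpha))\cap\cS_n$ has at most $2^{n+1}$ elements, giving $\DimM\le1$. By Proposition \ref{pr:Dim} it suffices to produce, almost surely, infinitely many $n$ for which $|\pix(\cH_{_X}(\alpha))\cap\cS_n|\ge 2^{n(1-\varepsilon)}$ for each fixed $\varepsilon>0$. Since we work on the half-line, $\cS_n\cap\Z_+$ is essentially the dyadic block $[2^{n-1},2^n)$, so the task is to show that within a dyadic time-block of length $\asymp 2^n$ the process $X$ exceeds the moving barrier $t^{1/\beta}(\Log t)^\alpha$ at a positive proportion (on the exponential scale) of the integer times.

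The natural approach is a second-moment / Paley–Zygmund argument on the counting variable $M_n:=\sum_{t\in\Z\cap[2^{n-1},2^n)}\1\{X_t\ge t^{1/\beta}(\Log t)^\alpha\}$. For the first moment, I would use the stable tail estimate \eqref{tail:stable}: since $t\asymp 2^n$ and $\Log t\asymp n$ throughout the block, $\P\{X_t\ge t^{1/\beta}(\Log t)^\alpha\}\asymp (\Log t)^{-\alpha\beta}\asymp n^{-\alpha\beta}$, so $\E M_n\asymp 2^{n} n^{-\alpha\beta}$, which is $2^{n(1-o(1))}$ — far more than enough. The key obstacle is the second moment: I must bound $\E(M_n^2)=\sum_{s,t}\P\{X_s\ge b_s,\,X_t\ge b_t\}$ where $b_t:=t^{1/\beta}(\Log t)^\alpha$, and show $\E(M_n^2)\lesssim (\E M_n)^2$ up to a constant or at worst a subexponential factor. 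For $s<t$ write $X_t=X_s+(X_t-X_s)$; the increment $X_t-X_s$ is independent of $X_s$ and is symmetric $\beta$-stable with scale $(t-s)^{1/\beta}$. The difficulty, and the reason the stable case is harder than the Brownian case flagged in the text, is that conditionally on $\{X_s\ge b_s\}$ the value $X_s$ can be enormous (the stable law has heavy tails, so the conditional law of $X_s$ given $X_s\ge b_s$ is not concentrated near $b_s$), which could make $\{X_t\ge b_t\}$ much more likely than its unconditional probability and thereby inflate the correlation.

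To control this I would split according to whether $X_s$ is "moderate" ($b_s\le X_s\le \lambda_n t^{1/\beta}$ for a slowly growing $\lambda_n$, say $\lambda_n=n$) or "large" ($X_s>\lambda_n t^{1/\beta}$). On the large event one uses the crude bound $\P\{X_s>\lambda_n t^{1/\beta}\}\asymp \lambda_n^{-\beta}2^{-?}\cdots$ — more precisely $\asymp (\lambda_n)^{-\beta}$ times a factor accounting for $s\asymp 2^n$ — which is summable enough after multiplying by the trivial bound $\P\{X_t\ge b_t\mid\text{large}\}\le 1$; here choosing $\lambda_n$ polynomial in $n$ kills this term on the exponential scale. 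On the moderate event, $X_s\le \lambda_n t^{1/\beta}$, so $\{X_t\ge b_t\}$ requires $X_t-X_s\ge b_t-\lambda_n t^{1/\beta}\ge -\lambda_n t^{1/\beta}$, and using the scaling of the increment together with the fact (recalled in the proof of Theorem \ref{th:Khintchine}) that $X_1$ has a bounded density, one gets $\P\{X_t-X_s\ge b_t-X_s\mid X_s\}\lesssim \max\{(t-s)^{-1/\beta}\lambda_n t^{1/\beta},\,n^{-\alpha\beta}\}$. Summing over $s,t$: the pairs with $t-s$ comparable to $t$ contribute $\lesssim (\E M_n)^2\cdot\mathrm{polylog}(n)$, while the "diagonal" pairs with $t-s$ small are $O(2^n\cdot\mathrm{poly}(n))=2^{n(1+o(1))}$, also negligible against $(\E M_n)^2=2^{n(2-o(1))}$. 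This yields $\E(M_n^2)\le 2^{n o(1)}(\E M_n)^2$, so by Paley–Zygmund $\P\{M_n\ge \tfrac12\E M_n\,2^{-n\varepsilon}\}\gtrsim 2^{-n o(1)}$ — strictly positive but decaying, so a direct Borel–Cantelli is not available. To upgrade "positive probability infinitely often on a subsequence" to "almost surely infinitely often," I would use the independence of increments over widely separated, long dyadic blocks: replace $X_t$ on block $n$ by $X_t-X_{2^{n-1}}$ (which changes the barrier by a lower-order amount on the moderate event and can be absorbed), obtaining events on disjoint blocks that are independent; then apply the second Borel–Cantelli lemma along a sparse sequence of blocks $n_k$ to conclude $\limsup_n n^{-1}\Log|\pix(\cH_{_X}(\alpha))\cap\cS_n|\ge 1-\varepsilon$ a.s., and let $\varepsilon\downarrow0$ along a countable sequence. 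The main obstacle throughout is the bookkeeping for the heavy-tailed "large $X_s$" contribution to the second moment; everything else is scaling plus the bounded-density fact already invoked in the paper.
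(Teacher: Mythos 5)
Your overall strategy (a second moment bound for the count $M_n$ over the integer times of a dyadic block, Paley--Zygmund, then an independence argument across blocks) is viable, and you correctly identify the heavy-tailed overshoot of $X_s$ past the barrier as the main enemy. But the concluding step, as you describe it, does not close --- and the telltale symptom is that your argument never invokes the hypothesis $\alpha\le\beta^{-1}$, so if it worked as written it would also ``prove'' the statement for $\alpha>\beta^{-1}$, contradicting Theorem \ref{th:Khintchine}. Concretely, with $b_t:=t^{1/\beta}(\Log t)^\alpha$, the dominant contribution to $\E(M_n^2)$ comes from pairs $s<t$ for which $X_s$ already exceeds $b_t$: this has probability $\asymp (s/t)\,n^{-\alpha\beta}\asymp n^{-\alpha\beta}$, after which $\{X_t\ge b_t\}$ costs only a constant. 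Hence $\E(M_n^2)\asymp n^{\alpha\beta}\,(\E M_n)^2$; the polylogarithmic inflation is sharp and cannot be removed by your moderate/large split. Paley--Zygmund therefore yields only $\P\{M_n\ge\tfrac12\E M_n\}\asymp n^{-\alpha\beta}\to0$. At that point you cannot run the second Borel--Cantelli lemma along a sparse (say geometric) subsequence $n_k$, because $\sum_k n_k^{-\alpha\beta}<\infty$; you must use \emph{all} blocks, and the sum $\sum_n n^{-\alpha\beta}$ diverges precisely because $\alpha\beta\le1$. This is exactly where the hypothesis must enter, and your sketch omits it. (Using every block is fine for independence since the blocks are disjoint in time; but note that the recentering event $\{|X_{2^{n-1}}|\le\varepsilon b_{2^n}\}$ has probability only $1-O(n^{-c})$ because $X_{2^{n-1}}$ is itself heavy-tailed, so finishing requires Kochen--Stone or a conditioning argument rather than a naive intersection of ``good'' events.)

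For contrast, the paper sidesteps the $n^{\alpha\beta}$ inflation altogether by two devices you may want to borrow. First, it tests the process only at a sparse, stretched-exponentially growing sequence of times $T_j$, so that $T_j/T_k$ is genuinely small for most pairs $j<k$ in a block of indices. Second --- and this is the clean substitute for your moderate/large split --- it replaces the half-line event by the bounded window $X_{T_j}\in[T_j^{1/\beta}(\Log T_j)^\alpha,\,2T_j^{1/\beta}(\Log T_j)^\alpha)$, which caps the overshoot and makes the increment from $T_j$ to $T_k$ do all the work; the near-diagonal pairs are handled by a counting bound, and that is where $\alpha\beta\le1$ enters there. The payoff is $\E(W_n^2)\lesssim(\E W_n)^2$ with a genuine constant, hence a Paley--Zygmund probability bounded below uniformly in $n$, which is upgraded to probability one by the Hewitt--Savage 0--1 law rather than by second Borel--Cantelli. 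If you prefer to keep your dense-time version, the minimal repair is: establish the sharp bound $\E(M_n^2)\lesssim n^{\alpha\beta}(\E M_n)^2$, run the argument over every dyadic block, and use $\alpha\beta\le1$ to get divergence of the resulting series of probabilities.
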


\begin{proof}
	It suffices to prove that 
	\begin{equation}\label{goal:stable}
		\DimM(\cH_{_X}(\alpha))\ge 1\qquad\text{a.s.}
	\end{equation}
	Throughout the proof, we choose and fix a constant
	\[
		\gamma\in(0\,,1).
	\]
	
	Let us define an increasing sequence $T_1,T_2,\ldots,$ where
	\[
		T_j := 4^{\beta j^\gamma/\gamma} = 
		\exp\left( \frac{\beta \log (4) j^\gamma}{\gamma}\right),
	\]
	where ``$\log$'' denotes the natural logarithm.
	Let us also introduce a collection of intervals $\cI(1),\cI(2),\ldots,$ defined
	as follows:
	\[
		\cI(j) := \left[  T_j^{1/\beta}(\Log T_j)^\alpha
		~,~ 2T_j^{1/\beta}(\Log T_j)^\alpha\right).
	\]
	Finally, let us introduce events $\cE_1,\cE_2,\ldots$, where
	\[
		\cE_j := \left\{ \omega\in\Omega:\ 
		X_{T_j}(\omega) \in \cI(j)\right\}.
	\]
	
	According to \eqref{limlim},
	\begin{align*}
		\P(\cE_j) &= \P\left\{ X_{T_j}\ge T_j^{1/\beta}(\Log T_j)^\alpha\right\}
			-\P\left\{ X_{T_j} \ge 2 T_j^{1/\beta}(\Log T_j)^\alpha\right\}\\
		&\sim \frac{\varrho\left( 1 - 2^{-\beta}\right)}{(\Log T_j)^{\alpha\beta}}
			\qquad[\text{as }j\to\infty]\\
		&= \frac{\varrho\left( 1-2^{-\beta}\right)}{j^{\alpha\gamma\beta}}.
	\end{align*}
	For every integer $n\ge 1$, let us define
	\[
		W_n := \sum_{j=2^{n-1}}^{2^n-1}\1_{\cE_j}.
	\]
	It follows from the preceding that there exists an integer $n_0\ge1$ such that
	\begin{equation}\label{E(W_n)}
		\E(W_n) \gtrsim 2^{n(1-\alpha\beta\gamma)}\quad\text{uniformly
		for all $n\ge n_0$}.
	\end{equation}
	Next, we estimate $\E(W_n^2)$, which may be written in the
	following form:
	\begin{equation}\label{E(Wn2)}
		\E(W_n^2) = \E(W_n) + 2 \mathop{\sum\sum}\limits_{%
		2^{n-1}\le j < k < 2^n} \P(\cE_j\cap \cE_k).
	\end{equation}
	Henceforth, suppose $k>j$ are two integers between $2^{n-1}$ and $2^n-1$.
	
	Because $X$ has stationary independent increments,
	\begin{equation}\label{PPP}
		\P(\cE_j\cap\cE_k) \le \cP_j\times\cP_{j,k},
	\end{equation}
	where
	\begin{align*}
		\cP_j&=\P\left\{ X_{T_j} \ge T_j^{1/\beta}(\Log T_j)^\alpha\right\},\\
		\cP_{j,k}&=\P\left\{ X_{T_k-T_j} \ge T_k^{1/\beta}(\Log T_k)^\alpha
			- 2T_j^{1/\beta}(\Log T_j)^\alpha\right\}.
	\end{align*}
	In accord with \eqref{tail:stable},
	\begin{equation}\label{Pj}
		\cP_j =\P(\cE_j) \asymp j^{-\alpha\beta\gamma}.
	\end{equation}
	The analysis of $\cP_{j,k}$ is somewhat more complicated
	than that of $\cP_j$ and requires a little more work. 
	
	First, one might observe that
	\begin{equation}\label{Pjk}\begin{split}
		\cP_{j,k} &\le\P\left\{ X_{T_k-T_j} \ge (\Log T_k)^\alpha
			\left[T_k^{1/\beta}- 2T_j^{1/\beta}\right]\right\}\\
		&= \P\left\{ X_1 \ge k^{\alpha\gamma}
			\left[ \frac{T_k^{1/\beta} -2T_j^{1/\beta}}{(T_k-T_j)^{1/\beta}}\right]\right\}
			\quad[\text{by scaling}]\\
		&\le\P\left\{ X_1 \ge k^{\alpha\gamma}
			\left[ 1 - 2\left( \frac{T_j}{T_k}\right)^{1/\beta}\right]\right\}.
	\end{split}\end{equation}
	[The final inequality holds simply because $(T_k-T_j)^{1/\beta}
	\le T_k^{1/\beta}$.] 
	
	If $j$ and $k$ are integers in $[2^{n-1}\,,2^n)$ that satisfy $j\le k-k^{1-\gamma}$, then
	\[
		k^\gamma-j^\gamma = k^\gamma \left[ 1 - \left(\frac jk\right)^\gamma\right]
		\ge k^\gamma\left[ 1 - \left\{ 1 - k^{-\gamma} \right\}^\gamma\right]
		\ge \gamma.
	\]
	The preceding is justified by the following
	elementary inequality: $(1-x)^\gamma \ge 1-\gamma x$ for all $x\ge0$. 
	As a result, we are led to the following bound:
	\[
		1 - 2\left( \frac{T_j}{T_k}\right)^{1/\beta}
		= 1 -2\exp\left( -\frac{\log 4}{\gamma} \left[ k^\gamma-j^\gamma\right]\right)
		\ge \frac12,
	\]
	valid
	uniformly for all integers $j$ and $k$ that satisfy $k>j\ge k-k^{1-\gamma}$ and
	are between $2^{n-1}$ and $2^n-1$. Therefore, \eqref{tail:stable}  and \eqref{Pjk}
	together imply that
	\[
		\cP_{j,k} \le \P\left\{ X_1 \ge k^{\alpha\gamma}\right\}
		\lesssim k^{-\alpha\beta\gamma},
	\]
	uniformly for all integers $k>j$ that are in $[2^{n-1},\,2^n-1)$ and 
	satisfy $j\le k-k^{1-\gamma}$, and uniformly for every integer $n\ge n_0$.
	It follows from this bound, \eqref{PPP}, and \eqref{Pj} that
	\begin{equation}\label{stable1}
		\mathop{\sum\sum}\limits_{\substack{2^{n-1}\le j<k<2^n\\
		j\le k-k^{1-\gamma}}}\P(\cE_j\cap \cE_k) \lesssim
		\mathop{\sum\sum}\limits_{\substack{2^{n-1}\le j<k<2^n\\
		j\le k-k^{1-\gamma}}} (jk)^{-\alpha\beta\gamma}
		\lesssim 4^{n(1-\alpha\beta\gamma)},
	\end{equation}
	uniformly for all integers $n\ge n_0$. 
	
	On the other hand,
	\begin{align*}
		\mathop{\sum\sum}\limits_{\substack{2^{n-1}\le j<k<2^n\\
			j>k-k^{1-\gamma}}}\P(\cE_j\cap \cE_k) &
			\le\mathop{\sum\sum}\limits_{\substack{2^{n-1}\le j<k<2^n\\
			j>k-k^{1-\gamma}}}\P(\cE_k) \lesssim
			\mathop{\sum\sum}\limits_{\substack{2^{n-1}\le j<k<2^n\\
			j>k-k^{1-\gamma}}} k^{-\alpha\beta\gamma}\qquad\text{[by \eqref{Pj}]}\\
		&\lesssim\sum_{k=2^{n-1}}^{2^n-1} k^{1-\gamma-\alpha\beta\gamma}
			\lesssim 2^{n(2-\gamma-\alpha\beta\gamma)}\\
		&\le 4^{n(1-\alpha\beta\gamma)},
	\end{align*}
	since $\alpha\beta\le1$.
	Therefore, \eqref{stable1} implies that
	\[
		\mathop{\sum\sum}\limits_{2^{n-1}\le j<k<2^n}
		\P(\cE_j\cap \cE_k) \lesssim 4^{n(1-\alpha\beta\gamma)}.
	\]
	This and \eqref{E(Wn2)} together imply that
	\[
		\E(W_n^2) \le \E(W_n) + \left( \E[W_n]\right)^2,
	\]
	uniformly for all $n\ge n_0$. Because of \eqref{E(W_n)} and the condition
	$\alpha\beta\le1$,
	it follows that $\E(W_n)\gtrsim 1$, uniformly
	for all $n\ge 1$; in other words,
	$(\E[W_n])^2\gtrsim\E(W_n)$ for all $n\ge1$.
	Therefore, there exists a finite and positive constant $c$ such that
	\[
		\E(W_n^2) \le c\left( \E[W_n]\right)^2\qquad
		\text{for all $n\ge n_0$}.
	\]
	An appeal to the Paley--Zygmund inequality then yields
	the following: Uniformly for all integers $n\ge n_0$,
	\[
		\inf_{n\ge n_0}\P\left\{ W_n > \tfrac12\E(W_n) \right\} \ge (4c)^{-1}.
	\]
	From this and \eqref{E(W_n)} it immediately follows that
	\[
		\P\left\{ \limsup_{n\to\infty} n^{-1}
		\Log W_n \ge 1-\alpha\beta\gamma\right\} \ge (4c)^{-1}>0.
	\]
	The event in the preceding event is a tail event for the L\'evy process $X$.
	Therefore, the Hewitt--Savage 0--1 law implies that 
	\[
		\limsup_{n\to\infty} n^{-1}
		\Log W_n \ge 1-\alpha\beta\gamma 
		\qquad\text{a.s.}
	\]
	Because $\gamma\in(0\,,1)$ was arbitrary,  this proves that
	$\limsup_{n\to\infty} n^{-1}\Log W_n \ge 1$ a.s.,
	and \eqref{goal:stable} follows since $\DimM(\cH_{_X}(\alpha))
	\ge\limsup_{n\to\infty} n^{-1}\Log W_n$. This completes the proof
	of the theorem.
\end{proof}

\spacing{.8}\begin{small}

\bigskip
\noindent\textbf{Davar Khoshnevisan}.
Department of Mathematics, University of Utah,
Salt Lake City, UT 84112-0090,
\textcolor{purple}{\texttt{davar@math.utah.edu}}\\

\noindent\textbf{Yimin Xiao}.
\noindent Dept.\  Statistics \&\ Probability,
Michigan State University, East Lansing, MI 48824,
\textcolor{purple}{\texttt{xiao@stt.msu.edu}}
\end{small}


\begin{thebibliography}{99}
%
\bibitem{BarlowTaylor1}
Barlow, M. T. and Taylor, S. J.
Fractional dimension of sets in discrete spaces.
	{\it J. Phys.\ A} {\bf 22} (1989) 2621--2626.
%
\bibitem{BarlowTaylor}
Barlow, M. T. and Taylor, S. J.
Defining fractal subsets of $\mathbb{Z}^d$.
{\it Proc.\ London Math.\ Soc.}\ {\bf 64} (1992) 125--152.

\bibitem{Bertoin96}
Bertoin, J. {\it L\'evy Processes}.
Cambridge University Press, Cambridge, 1996.

\bibitem{Bertoin:Subord}
Bertoin, J. {\it Subordinators: Examples and Applications}.
In: {\it Lectures on Probability Theory and Statistics (Saint-Flour,
1997)}. Lecture Notes in Math., {\bf 1717},
pp.\ 1--91, Springer, Berlin, 1999.

\bibitem{Bochner55}
Bochner, S. {\it Harmonic Analysis and the Theory of Probability.}
University of California Press, Berkeley and Los Angeles, 1955.
  
\bibitem{Hall88}
Hall, P. {\it Introduction to the Theory of Coverage Processes.}
Wiley, New York, 1988.

\bibitem{Hawkes81}
Hawkes, J.  Trees generated by a simple branching process. 
{\it J. London Math. Soc.\ (2)} {\bf 24} (1981) 373--384.  

\bibitem{Horowitz68}
Horowitz, J. The Hausdorff dimension of the sample path of
a subordinator. {\it Israel J. Math.}\ {\bf 6} (1968) 176--182.

\bibitem{Khintchine}
Khintchine, A. Sur la croissance locale des processus stochastiques homog\'enes 
\`a accroissements ind\'ependants. (Russian) {\it Bull. Acad. Sci. URSS. S\'er. Math.
 [Izvestia Akad. Nauk SSSR] }  {\bf 3} (1939)  487--508.  

\bibitem{KKX}
Khoshnevisan, D.,  Kim, K. and Xiao, Y. Intermittency and multifractality: a
case study via parabolic stochastic PDEs. {\it Ann.\ Probab.}, to appear.

\bibitem{KX05}
Khoshnevisan, D. and Xiao, Y. L\'evy processes: capacity and
Hausdorff dimension. {\it Ann.\ Probab.}\ {\bf 33} (2005) 841--878.


\bibitem{KX09}
Khoshnevisan, D. and Xiao, Y. Harmonic analysis of
additive L\'evy processes. {\it Probab.\ Th.\ Rel.\ Fields}
{\bf 145} (2009) 459--515.

\bibitem{KXZ}
Khoshnevisan, D., Xiao, Y. and Zhong, Yuquan.
Measuring the range of an additive L\'evy process.
{\it Ann.\ Probab.}\  {\bf 31} (2003) 1097--1141.

\bibitem{Motoo} Motoo, M.
Proof of the law of the iterated logarithm through diffusion equation.
	{\it Ann.\ Instit.\ Statist.\  Math.}\ {\bf 10}{\it (1)}
	(1959) 21--28.
	
\bibitem{Peres96a}
Peres, Y.  Intersection equivalence of Brownian paths and certain branching processes.
{\it Commun. Math. Phys. } {\bf 177} (1996) 417--434.

\bibitem{Peres96}
Peres, Y. Remarks on intersection-equivalence and capacity-equivalence.
{\it Ann.\ Inst.\ Henri
Poincar\'e (Physique th\'eorique)} {\bf 64} (1996) 339--347.

\bibitem{PS71}
Port, S. C. and Stone, C. J. Infinitely divisible processes
and their potential theory. I, II. {\it Ann. Inst. Fourier (Grenoble)}
{\bf 21} (1971), no 2,  157--275; {\it ibid.}\ {\bf 21} (1971)
no.\ 4, 179--265.

\bibitem{Pruitt69}
Pruitt, W. E.  The Hausdorff dimension of the range
    of a process with stationary independent increments. {\it J.
    Math.\ Mech.}\ {\bf  19}  (1969) 371--378.

\bibitem{Tay86}  
Taylor, S. J. The measure theory of random fractals. {\it
Math. Proc. Camb. Philos. Soc.} {\bf 100} (1986) 383--406.

\bibitem{X04}
Xiao, Y. Random fractals and Markov processes. In: {\it Fractal
Geometry and Applications: A Jubilee of Benoit Mandelbrot},
(Michel L. Lapidus and Machiel van Frankenhuijsen, editors), pp.
261--338, American Mathematical Society, 2004.

\end{thebibliography}
\end{document}